\newcommand{\R}{\mathbb{R}}
\newcommand{\C}{\mathbb{C}}
\newcommand{\T}{\mathbb{T}}
\newcommand{\dif}{\,\mathrm{d}}
\def\XXint#1#2#3{{\setbox0=\hbox{$#1{#2#3}{\int}$}
		\vcenter{\hbox{$#2#3$}}\kern-.5\wd0}}
\theoremstyle{theorem}
\newtheorem{thm}{Theorem}[section]
\newtheorem{prop}{Proposition}[section]
\newtheorem{cor}{Corollary}[section]
\newtheorem{lemma}{Lemma}[section]
\theoremstyle{definition}
\newtheorem{definition}{Definition}[section]
\newtheorem{conjecture}{Conjecture}[section]
\theoremstyle{remark}
\title{Non-uniqueness of admissible solutions for the 2D Euler equation with $L^p$ vortex data}
\author{Francisco Mengual}
\date{\today}
\begin{document}
	
\maketitle

\begin{abstract}
For any $2<p<\infty$ we prove that there exists an initial velocity field $v^\circ\in L^2$ with vorticity $\omega^\circ\in L^1\cap L^p$ for which there are infinitely many bounded admissible solutions $v\in C_tL^2$ to the 2D Euler equation. This shows sharpness of the weak-strong uniqueness principle, as well as sharpness of Yudovich's proof of uniqueness in the class of bounded admissible solutions. The initial data are truncated power-law vortices. The construction is based on finding a suitable self-similar subsolution and then applying the convex integration method. In addition, we extend it for $1<p<\infty$ and show that the energy dissipation rate of the subsolution vanishes at $t=0$ if and only if $p\geq\nicefrac{3}{2}$, which is the Onsager critical exponent in terms of $L^p$ control on vorticity in 2D.
\end{abstract}

\section{Introduction and main results}

We consider the Cauchy problem for the Euler equation
\begin{subequations}\label{eq:IE}
\begin{align}
\partial_t v+\mathrm{div}(v\otimes v)+\nabla p &= 0, \\
\mathrm{div}v&=0,\\
v|_{t=0}&=v^\circ,
\end{align}
\end{subequations}
posed on the domain $[0,T]\times\R^2$, where $p(t,x)$ is the  pressure, $v(t,x)$ is the velocity field, and $v^\circ(x)$ is the initial datum. 
In this work we are interested in non-uniqueness of weak (i.e.~distributional) solutions to the Euler equation. More precisely, we address the question of what is the threshold regularity at $t=0$ for which uniqueness of bounded admissible solutions fails. 
A weak solution $v\in L_t^\infty L^2$ to the Euler equation is called \textit{admissible} if it does not increase the (kinematic) energy $E:=\frac{1}{2}\|v\|_{L^2}^2$
\begin{equation}\label{def:admissible}
E(t)\leq E(0)\quad\textrm{for a.e.}\quad t\in[0,T].
\end{equation}
This admissibility criterion is based on considering weakly convergent sequences of Leray solutions of Navier-Stokes with vanishing viscosity (see e.g.~\cite{DeLellisSzekelyhidi10}). Admissible solutions
coincide with strong solutions as long as the latter exist:
Suppose $v^\circ$ admits a strong solution $v_s\in C^1$, and let $v$ be another admissible solution. A straightforward computation shows that the relative energy $E_{\text{rel}}:=\frac{1}{2}\|v-v_s\|_{L^2}^2$ can be bounded by
\begin{equation}\label{eq:Erel:1}
E_{\text{rel}}(t)
\leq\int_0^t\int_{\R^2}|\nabla v_s||v-v_s|^2\dif x\dif\tau.
\end{equation}
This estimate combined with the Gr\"onwall inequality allows to conclude that necessarily $E_{\text{rel}}=0$ ($v=v_s$). 
Indeed, it is enough to assume that $\nabla v_s\in L_t^1 L^\infty$.
This fact is known in the literature as the weak-strong uniqueness principle (see e.g.~\cite{Wiedemann18}).

\begin{thm}[Weak-strong uniqueness principle]\label{thm:weakstrong} 
Suppose there exists a strong solution $v_s\in C^1$ to the Euler equation. Then, it is unique in the class of admissible solutions.
\end{thm}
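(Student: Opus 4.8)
The plan is to control the distance between an arbitrary admissible solution $v$ (sharing the initial datum $v^\circ$) and the classical solution $v_s$ by the \emph{relative energy} $E_{\mathrm{rel}}(t):=\tfrac12\|v(t)-v_s(t)\|_{L^2}^2$, to make the estimate \eqref{eq:Erel:1} rigorous, and then to close the argument by Gr\"onwall's inequality using the bound $\nabla v_s\in L^1_tL^\infty$ (which is part of what the hypothesis $v_s\in C^1$ on $[0,T]\times\R^2$ is taken to provide).

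The first step is to expand
\[
E_{\mathrm{rel}}(t)=E(t)-\langle v(t),v_s(t)\rangle_{L^2}+E_s(t),\qquad E_s:=\tfrac12\|v_s\|_{L^2}^2 .
\]
Since $v_s$ is a classical solution it obeys the exact energy balance $E_s(t)=E_s(0)$, while admissibility of $v$ gives $E(t)\le E(0)$ for a.e.\ $t$; combined with $v(0)=v_s(0)=v^\circ$ (so that $E_s(0)=E(0)=\tfrac12\|v^\circ\|_{L^2}^2$, hence $E_{\mathrm{rel}}(0)=0$) this yields
\[
E_{\mathrm{rel}}(t)\le\|v^\circ\|_{L^2}^2-\langle v(t),v_s(t)\rangle_{L^2}
=-\int_0^t\frac{\mathrm{d}}{\mathrm{d}\tau}\langle v(\tau),v_s(\tau)\rangle_{L^2}\dif\tau .
\]
The heart of the computation is to evaluate this time derivative from the two momentum equations. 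Pairing the equation for $v$ with $v_s$ and the equation for $v_s$ with $v$, the pressure contributions vanish because $v$ and $v_s$ are divergence-free, and after integrating by parts one gets, with $w:=v-v_s$,
\[
\frac{\mathrm{d}}{\mathrm{d}\tau}\langle v,v_s\rangle_{L^2}
=\int_{\R^2}(v\otimes v):\nabla v_s\dif x-\int_{\R^2}\big((v_s\cdot\nabla)v_s\big)\cdot v\dif x
=\int_{\R^2}\big((w\cdot\nabla)v_s\big)\cdot w\dif x ,
\]
where the last equality follows by writing $v=w+v_s$ and using $\int w\cdot\nabla|v_s|^2=\int v_s\cdot\nabla|v_s|^2=0$ (again by $\mathrm{div}\,v=\mathrm{div}\,v_s=0$) to cancel all cross terms that involve $v_s$ alone. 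Therefore
\[
E_{\mathrm{rel}}(t)\le-\int_0^t\!\!\int_{\R^2}\big((w\cdot\nabla)v_s\big)\cdot w\dif x\dif\tau
\le\int_0^t\!\!\int_{\R^2}|\nabla v_s|\,|v-v_s|^2\dif x\dif\tau ,
\]
which is exactly \eqref{eq:Erel:1}. Bounding $\int_{\R^2}|\nabla v_s|\,|w|^2\dif x\le 2\|\nabla v_s(\tau)\|_{L^\infty}E_{\mathrm{rel}}(\tau)$ and applying the integral form of Gr\"onwall's inequality with the integrable weight $\tau\mapsto 2\|\nabla v_s(\tau)\|_{L^\infty}\in L^1(0,T)$ then forces $E_{\mathrm{rel}}\equiv0$, i.e.\ $v=v_s$.

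The only genuinely delicate point is the justification of the identity for $\tfrac{\mathrm{d}}{\mathrm{d}\tau}\langle v,v_s\rangle_{L^2}$: $v$ is merely a weak solution in $L^\infty_tL^2$, so $\partial_t v$ exists only distributionally and $v_s$ is not a compactly supported test field. I would handle this by working with the weakly time-continuous representative of $v$, mollifying in time, using (a spatial truncation/mollification of) the smooth divergence-free $v_s$ as a test function in the weak formulation for $v$ and the mollified $v$ as a test function in the equation for $v_s$, and passing to the limit in the mollification parameter; the spatial integrations by parts and the vanishing of the pressure terms are legitimate by the $L^2$-integrability of $v,v_s$, the bound $\nabla v_s\in L^\infty$, and the decay at infinity of the classical solution $v_s$ and its pressure. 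This approximation step, rather than the subsequent Gr\"onwall estimate, is where the analytic content of the statement lies.
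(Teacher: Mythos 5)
Your proposal is correct and follows essentially the same route as the paper: the relative-energy estimate \eqref{eq:Erel:1} obtained by testing the two momentum equations against each other (with the pressure terms vanishing by the divergence-free conditions), followed by Gr\"onwall using $\nabla v_s\in L_t^1L^\infty$. The extra care you take with the mollification/test-function justification of the cross-term identity is exactly the standard technical content the paper delegates to the cited literature.
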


Our first main result shows sharpness of Theorem \ref{thm:weakstrong} for H\"older spaces. More precisely, it states that if the $C^1$ assumption is weakened at a single point by $C^\gamma$ for some $0<\gamma<1$, then uniqueness fails in the class of admissible solutions. As a by-product, it shows existence of wild data above the Onsager critical exponent $\gamma=\nicefrac{1}{3}$ (see Section \ref{sec:CI}).

\begin{thm}\label{thm:nonuniqueness:Holder} For any $0<\gamma<1$ there exists a steady solution $v_s\in C^\gamma$ to the Euler equation which is smooth  away from the origin, with the property that there are infinitely many admissible solutions $v\in C_tL^2$ to the Euler equation \eqref{eq:IE} starting from $v^\circ=v_s$.
\end{thm}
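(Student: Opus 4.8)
The plan is to construct $v_s$ as a radially symmetric steady state of Euler — a pure vortex $v_s(x) = g(|x|)\,x^\perp/|x|$ for a suitable scalar profile $g$ — chosen so that $v_s \in C^\gamma$ near the origin, smooth elsewhere, and with vorticity $\omega_s$ that is a truncated power law of the form $|x|^{\gamma-1}$ near $0$ (cut off to make $\omega_s \in L^1\cap L^p$ for appropriate $p$). Any radial vorticity profile gives a stationary Euler solution because the velocity is tangent to the level circles of $\omega_s$, so the transport term $v_s\cdot\nabla\omega_s$ vanishes identically; this handles existence of the steady background effortlessly, and the scaling $g(r)\sim r^\gamma$ is exactly what produces $C^\gamma$ regularity of $v_s$ at the origin while keeping it smooth away from it.

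Next I would look for a \emph{subsolution} adapted to this background: a solution $(\bar v, \bar u, \bar q)$ of the relaxed Euler–Reynolds system on $[0,T]\times\R^2$ with the same initial data $v^\circ = v_s$, whose "turbulent" part is supported away from $t=0$ and which has strictly subsolution-type stress on an open set in spacetime. The natural ansatz — and presumably the one the paper uses, given the abstract — is self-similar: write $\bar v(t,x) = t^{\alpha}\, V(x/t^{\beta})$ with exponents tied to $\gamma$, so that the profile equation becomes an ODE/algebraic problem. One wants the subsolution to coincide with $v_s$ at $t=0$, to be admissible (energy non-increasing), and to have a non-empty region where the pointwise subsolution inequality (the Reynolds stress dominating $\bar v\otimes\bar v$ in the appropriate matrix sense) is strict. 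Building such a self-similar subsolution that genuinely attaches to the prescribed $C^\gamma$ vortex at $t=0$, while respecting the energy inequality, is the technical core; I expect the sharp bookkeeping of the self-similar exponents and the verification that the subsolution's energy profile lies below $E(0)$ to be the main obstacle.

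Finally I would invoke the convex integration machinery (the $h$-principle for the Euler equations à la De Lellis–Székelyhidi / Isett, in the bounded-velocity, $C_tL^2$ formulation used here): given a strict subsolution on an open subsolution region, there exist infinitely many exact weak solutions $v \in C_tL^2$ agreeing with $\bar v$ outside that region — in particular agreeing with $v_s$ at $t=0$ and inheriting the energy bound, hence admissible. Choosing distinct perturbations yields infinitely many such $v$, all starting from $v^\circ = v_s$, which proves the theorem. The two places demanding care are (i) checking that the convex integration step preserves the initial trace and the admissibility inequality — standard but needs the subsolution's stress to vanish as $t\to 0$ fast enough — and (ii) the "by-product" remark about wild data above $\gamma = 1/3$, which just amounts to noting that the argument places no lower restriction on $\gamma$, so $v_s$ itself can be taken Hölder-$\gamma$ with $\gamma > 1/3$ and still be a wild initial datum.
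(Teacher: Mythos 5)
Your strategy is the same as the paper's: the steady state is exactly the truncated power-law vortex $v_s(x)=\chi(|x|)\,|x|^{-\alpha}x^{\perp}$ (your $g(r)\sim r^{\gamma}$ with $\gamma=1-\alpha$, vorticity $\sim r^{\gamma-1}$), one looks for a radially symmetric \emph{self-similar} subsolution agreeing with $v_s$ outside a shrinking region $\{|x|\le (ct)^{1/\alpha}\}$, and one concludes with the De Lellis--Sz\'ekelyhidi h-principle, prescribing an energy profile $e>\bar e$ inside that region so that the resulting solutions are admissible (the region collapses at $t=0$, so the initial trace is $v_s$). So the outline is right, and your two flagged danger points (attachment at $t=0$ and the energy bookkeeping) are the correct ones.

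The genuine gap is that the step you label ``the technical core'' is precisely the content of the theorem, and your proposal only asserts the expectation that it works: you never exhibit a self-similar triple $(\bar v,\bar\sigma,\bar q)$ that is a subsolution, attaches to $v_s$, and has strictly decreasing energy. This is not automatic. In the paper's reduction, writing $\bar v=\chi\,h(t,|x|)x^{\perp}/|x|$ with $h(t,r)=(ct)^{(1-\alpha)/\alpha}H(\xi)$, $\xi=r/(ct)^{1/\alpha}$, the relaxed equation forces the off-diagonal stress profile $W_2$ to be the explicit integral $W_2(\xi)=\xi H-\frac{4-\alpha}{\xi^2}\int_0^\xi\zeta^2H\,\mathrm{d}\zeta$, and two nontrivial constraints on $H$ must be verified: a normalization $(4-\alpha)\int_0^1\xi^2H\,\mathrm{d}\xi=1$ so that the Reynolds stress is supported in $\{\xi\le1\}$ (otherwise $\bar v$ does not agree with the exact solution $v_s$ outside the self-similar ball), and the inequality $2(2-\alpha)\int_0^1\xi H^2\,\mathrm{d}\xi<1$, which makes the kinetic-energy decrease (the term $A$) dominate the stress contribution $B=\int_0^1|W_2|\xi\,\mathrm{d}\xi$ after choosing the growth rate $c$ small enough (e.g.\ $c=\frac{\alpha}{4-\alpha}\frac{A}{B}$); only then is $\partial_t\bar E<0$ and can one choose $e$ making $E$ constant or decreasing, hence admissible. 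The paper closes the gap by an explicit profile, $H(\xi)=(1-a\log\xi)\,\xi^{1+b}$ with $a$ fixed by the normalization, checked to satisfy the strict inequality, together with the resulting computation $\partial_t\bar E=-\frac{\pi}{16}c^{(8-\alpha)/(2\alpha)}t^{(4-3\alpha)/\alpha}$. Without producing such an $H$ (or some other verification that the constraint set is non-empty), and without the quantitative statement that $\bar E(t)<E(0)$ for $t>0$, your argument remains a plan rather than a proof; also note that admissibility hinges on this strict energy decrease of the subsolution, not on the stress ``vanishing fast enough'' as $t\to0$.
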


Notice that Theorem \ref{thm:weakstrong} concerns uniqueness, while existence is just an assumption. Above $C^1$ regularity, Wolibner \cite{Wolibner33} and H\"older \cite{Holder33} proved global well-posedness of the 2D Euler equation in $C^{1,\gamma}$ for any $\gamma>0$ (assuming suitable decay as $|x|\to\infty$). 
In this class, uniqueness follows immediately from Theorem \ref{thm:weakstrong}. The proof of global existence exploits the fact that the vorticity $\omega=\mathrm{rot}v$ 
is transported by the flow
\begin{subequations}\label{eq:IEvorticity}
\begin{align}
\partial_t\omega+\mathrm{div}(v\omega)&= 0, \\
v&=\nabla^\perp\Delta^{-1}\omega,\label{eq:IEvorticity:2}\\
\omega|_{t=0}&=\omega^\circ,
\end{align}
\end{subequations}
where $\omega^\circ=\mathrm{rot}v^\circ$, and \eqref{eq:IEvorticity:2} is the Biot-Savart law 
\begin{equation}\label{BiotSavart}
v(x)^*=\frac{1}{2\pi i}\int_{\R^2}\frac{\omega(y)}{x-y}\dif y.
\end{equation}
In \eqref{BiotSavart} we identify $\R^2$ with the complex plane $\C$ as usual, where $i$ denotes the imaginary unit and $*$ the complex conjugate.
It is well known from Harmonic analysis that the map $\omega\mapsto \nabla^\perp\Delta^{-1}\omega=v$ is continuous from $C^\gamma$ to $C^{1,\gamma}$. For $C^1$ vector fields $v$, the trajectory map $X$ of the flow is well defined by the Cauchy-Lipschitz theory applied to
\begin{subequations}\label{eq:X}
\begin{align}
\partial_tX&=v(t,X),\\
X|_{t=0}&=\mathrm{id}.
\end{align}
\end{subequations}
Thus, the Euler equation \eqref{eq:IEvorticity} can be written as
$\omega(t,X)=\omega^\circ(x)$, where $\omega$ and $X$ are related implicitly through $v$. 
The rigorous proof of global existence in $C^{1,\gamma}$ is carried out by a Schauder fixed-point argument 
(see e.g.~\cite{Kato67}). This result is in stark contrast to the 3D case, where Elgindi \cite{Elgindi21} proved formation of finite-time singularities due to vortex stretching (local well-posedness was known since Lichtenstein \cite{Lichtenstein25} and Gunther \cite{Gunther27}).

The borderline case $C^1$ is more delicate.
In this class, Bourgain and Li \cite{BourgainLi15} and latter Elgindi and Masmoudi \cite{ElgindiMasmoudi20} 
proved strong ill-posedness for the Euler equation (see \cite{CMZO22} for strong ill-posedness in $H^\beta$). 
The reason behind is that the map $\omega\mapsto v$ sends bounded vorticities to  $\log$-Lipschitz velocities. In spite of the lack of $C^1$ regularity, Yudovich \cite{Yudovich63} showed that the  log-Lipschitz modulus of continuity is still valid to define uniquely the Lagrangian map $X$ and prove global well-posedness (see also \cite[Chapter 8]{MajdaBertozzi02}). This  fact makes the class of bounded vorticities a natural space for the 2D Euler equation.
In order to motivate our second main result it is convenient to recall Yudovich's proof of uniqueness. This can be understood as a refinement of the proof of the weak-strong uniqueness principle.

We start by recalling two classical estimates of the Biot-Savart operator \eqref{BiotSavart}. The first one is the boundedness of the map $\omega\mapsto v$ from $L^1\cap L^p$ to $L^\infty$ for any $2<p\leq\infty$. This follows by splitting $\R^2$ into $|x-y|\geq 1$ \& $|x-y|<1$, and then applying the H\"older inequality
\begin{equation}\label{eq:vinfty}
\|v\|_{L^\infty}
\leq \|\omega\|_{L^1}+\left(\frac{p-1}{p-2}\right)^{1-\nicefrac{1}{p}}\|\omega\|_{L^p}.
\end{equation}
The second estimate is the $L^p$-boundedness of the map $\omega\mapsto\nabla v$ for any $1<p<\infty$. 
Notice that \eqref{BiotSavart} is the Cauchy transform, and thus $\nabla v$ can be written in terms of the Beurling transform of $\omega$, a 2D version of the Hilbert transform.
Then, it follows from the Calderon-Zygmund theory that
\begin{equation}\label{eq:Dvp}
\|\nabla v\|_{L^p}
\leq C\frac{p^2}{p-1}\|\omega\|_{L^p}.
\end{equation}
We will use $C$ to denote a constant, which may change from line to line but will be universal. The bound \eqref{eq:Dvp}  gives the exact growth as $p\to 1,\infty$. The precise computation of the $L^p$-norm of the Beurling transform is an outstanding open problem related to the Morrey conjecture (see e.g.~\cite{AIPS12}). Here we just need the inequality $\|\nabla v\|_{L^p}\leq Cp\|\omega\|_{L^p}$ for $2<p<\infty$.

Next, we recall Yudovich's energy method. 
Let $v_s\in C_t L^2$ with $\omega_s\in L_t^\infty(L^1\cap L^\infty)$ be a (Yudovich) solution, and let $v$ be another bounded admissible solution with $v^\circ=v_s^\circ$. By applying the H\"older inequality and \eqref{eq:Dvp}, the r.h.s.~of \eqref{eq:Erel:1} can be bounded by
\begin{equation}\label{eq:Erel:2}
\int_{\R^2}|\nabla v_s||v-v_s|^2\dif x
\leq Cp\|\omega_s\|_{L^p}\|v-v_s\|_{L^\infty}^{\nicefrac{2}{p}} E_{\text{rel}}^{1-\nicefrac{1}{p}},
\end{equation}
for any $2<p<\infty$, 
which plugged into \eqref{eq:Erel:1} implies that
\begin{equation}\label{eq:Erel:3}
E_{\text{rel}}(t)\leq \|v-v_s\|_{L_{t,x}^\infty}^2(C\|\omega_s\|_{L^p}t)^p.
\end{equation}
We recall that $\|\omega_s\|_{L^p}$ is independent of time because $\omega_s(t,X)=\omega_s^\circ(x)$ with $X$ volume-preserving by $\mathrm{div}v_s=0$.
On the one hand, the term $\|v-v_s\|_{L_{t,x}^\infty}$ can be bounded by the $L_{t,x}^\infty$-norm of $v$ and $v_s$ separately: 
the first is bounded by hypothesis, and the latter by \eqref{eq:vinfty} for $p=\infty$.
On the other hand, by the log-convexity of the $L^p$-norms we have
\begin{equation}\label{eq:omegasp}
\|\omega_s\|_{L^p}
\leq\max\{\|\omega_s\|_{L^1},\|\omega_s\|_{L^\infty}\}<\infty.
\end{equation} 
Finally, by letting $p\to\infty$ in \eqref{eq:Erel:3}, it follows that necessarily $E_{\text{rel}}=0$ ($v=v_s$).

\begin{thm}[Yudovich's well-posedness Theorem]\label{thm:Yudovich} Let $v^\circ\in L^2$ with $\omega^\circ\in L^1\cap L^\infty$ and $\mathrm{div}v^\circ=0$. Then, there exists a global solution $v\in C_t L^2$ with $\omega=L_t^\infty(L^1\cap L^\infty)$ to the Euler equation. Furthermore, it is unique in the class of bounded admissible solutions.
\end{thm}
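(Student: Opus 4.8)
The plan is to prove Theorem~\ref{thm:Yudovich} by separating the existence statement from the uniqueness statement; the uniqueness part is essentially already contained in the energy-method discussion preceding the theorem, so the main work is in carefully organizing existence. For \emph{uniqueness}, I would take $v$ to be any bounded admissible solution with $v^\circ=v_s^\circ$, where $v_s$ is the Yudovich solution, and run the computation \eqref{eq:Erel:1}--\eqref{eq:Erel:3} verbatim: Hölder and the Calderón--Zygmund bound \eqref{eq:Dvp} give \eqref{eq:Erel:2}, integration in time and Grönwall give \eqref{eq:Erel:3}, and then the three facts that (i) $\|v-v_s\|_{L^\infty_{t,x}}<\infty$ by \eqref{eq:vinfty} applied to $v_s$ together with the boundedness hypothesis on $v$, (ii) $\|\omega_s\|_{L^p}\le\max\{\|\omega_s\|_{L^1},\|\omega_s\|_{L^\infty}\}$ is finite and $p$-independent by \eqref{eq:omegasp} and the transport structure $\omega_s(t,X)=\omega_s^\circ$, and (iii) letting $p\to\infty$ forces the right-hand side of \eqref{eq:Erel:3} to vanish for $t<1/(C\|\omega_s\|_{L^p})$, yield $E_{\mathrm{rel}}\equiv0$ on a short interval; a continuation/bootstrap argument (the bound on $\|\omega_s\|_{L^p}$ is uniform in time) then extends this to all of $[0,T]$.

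For \emph{existence}, the natural route is the Lagrangian fixed-point scheme going back to Yudovich, which I would organize as follows. First, record the two Biot--Savart facts: \eqref{eq:vinfty} shows $\omega\in L^1\cap L^\infty\mapsto v\in L^\infty$, and the classical refinement shows that such $v$ is log-Lipschitz, i.e. $|v(x)-v(y)|\le C\|\omega\|_{L^1\cap L^\infty}\,\varphi(|x-y|)$ with $\varphi(r)=r(1-\log r)$ for $r$ small. Second, set up the iteration on the flow map: given a velocity field $v$ that is log-Lipschitz uniformly in $t$, the ODE \eqref{eq:X} has a unique, volume-preserving, bi-continuous flow $X(t,\cdot)$ by an Osgood-type uniqueness argument (the modulus $\varphi$ satisfies $\int_0^1 dr/\varphi(r)=\infty$); define $\omega(t,\cdot):=\omega^\circ\circ X(t,\cdot)^{-1}$, which by volume-preservation has $\|\omega(t)\|_{L^q}=\|\omega^\circ\|_{L^q}$ for every $q\in[1,\infty]$; then reconstruct $\tilde v:=\nabla^\perp\Delta^{-1}\omega$ via Biot--Savart. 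Third, show this map $v\mapsto\tilde v$ has a fixed point, either by a Picard iteration with Osgood-type estimates on the distance between successive flow maps, or by a compactness/Schauder argument using uniform log-Lipschitz bounds (which give equicontinuity in $x$) plus the equation for equicontinuity in $t$. Fourth, check that the resulting $v$ is a genuine weak solution of \eqref{eq:IE}: transport of $\omega$ gives \eqref{eq:IEvorticity} distributionally, and since in 2D the velocity is recovered from vorticity one recovers the momentum equation with an explicit pressure; finally $v\in C_tL^2$ and $\omega\in L^\infty_t(L^1\cap L^\infty)$ follow from the conserved $L^q$ norms, and admissibility \eqref{def:admissible} holds with equality because $\|v(t)\|_{L^2}$ is conserved for this solution.

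I expect the main obstacle to be the \emph{low-regularity flow theory}: unlike the $C^{1,\gamma}$ case, $v$ is only log-Lipschitz, so one cannot invoke Cauchy--Lipschitz directly and must instead prove existence, uniqueness, and stability of the flow $X$ through an Osgood-type Grönwall inequality adapted to the modulus $\varphi(r)=r(1-\log r)$, and simultaneously track that the nonlinear map $v\mapsto\tilde v$ is a contraction (or precompact) in a topology compatible with this modulus. A secondary technical point is the rigorous passage between the Lagrangian formulation $\omega(t,X)=\omega^\circ$ and the Eulerian weak formulation of \eqref{eq:IEvorticity}, including justification that the constructed $v$ solves \eqref{eq:IE} in the sense of distributions with the correct pressure; this is standard but requires the Calderón--Zygmund bound \eqref{eq:Dvp} to make sense of $\nabla v$ and the nonlinear term. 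Both obstacles are classical (this is Yudovich's theorem), so for a self-contained write-up I would cite \cite{Yudovich63} and \cite[Chapter~8]{MajdaBertozzi02} for the existence half and give the short self-contained energy argument above for uniqueness.
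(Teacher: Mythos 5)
Your proposal is correct and matches the paper's treatment: the paper's own ``proof'' of Theorem~\ref{thm:Yudovich} is exactly the energy-method argument \eqref{eq:Erel:1}--\eqref{eq:omegasp} with $p\to\infty$ that you reproduce (your short-time-plus-continuation handling of \eqref{eq:Erel:3} is, if anything, slightly more careful), while existence is deferred to \cite{Yudovich63} and \cite[Chapter~8]{MajdaBertozzi02}, just as you do with your outline of the log-Lipschitz/Osgood Lagrangian scheme. No gaps to report.
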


Our second main result shows sharpness of Yudovich's proof of uniqueness for $L^p$ spaces. More precisely, it states that if the $L^\infty$ assumption is weakened at a single point by $L^p$ for some $2<p<\infty$, then uniqueness fails in the class of bounded admissible solutions.

\begin{thm}\label{thm:nonuniqueness:Lp} For any $2< p<\infty$ there exists a steady solution $v_s\in L^2$ with $\omega_s\in L^1\cap L^p$ to the Euler equation which is smooth  away from the origin, with the property that there are infinitely many bounded admissible solutions $v\in C_tL^2$ to the Euler equation \eqref{eq:IE} starting from $v^\circ=v_s$.
\end{thm}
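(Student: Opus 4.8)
\emph{Strategy and choice of the datum.} The plan is to run the by-now standard two-step scheme for non-uniqueness: first construct a \emph{self-similar subsolution} --- a solution of the Euler--Reynolds system with a prescribed kinetic energy density, whose Reynolds stress is supported on a set shrinking to the origin and which emanates from $v_s$ --- and then apply a convex-integration $h$-principle to turn it into infinitely many exact admissible weak solutions. For the datum, fix $p\in(2,\infty)$, choose $\alpha\in(0,1)$ with $\alpha p<2$, and set $\beta:=1/\alpha>1$. Let $\omega_s$ be radial, equal to $c_\alpha|x|^{-\alpha}$ on $\{|x|\le1\}$, and smoothly extended to a compactly supported function with $\int_{\R^2}\omega_s=0$. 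Then $\omega_s\in L^1\cap L^p$, and $v_s:=\nabla^\perp\Delta^{-1}\omega_s$ is a compactly supported (hence $L^2$), bounded, circular field, with $v_s\in C^{1-\alpha}$, smooth away from the origin, and $v_s(x)=\tfrac{c_\alpha}{2-\alpha}|x|^{1-\alpha}\,x^\perp/|x|$ on $\{|x|\le1\}$; being circular, $v_s$ is a steady weak solution of \eqref{eq:IE}.

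\emph{Step 1: the self-similar subsolution.} I would look for $(v,q,\mathring R)$ with $\mathring R$ symmetric and trace-free solving $\partial_tv+\mathrm{div}(v\otimes v)+\nabla q=\mathrm{div}\mathring R$, $\mathrm{div}\,v=0$, together with a continuous kinetic energy density $\bar e\ge\tfrac12|v|^2$, on a short interval $[0,T]$ with $\rho_0 T^{\beta}<1$, of the self-similar form
\[
v=t^{\beta-1}V(x/t^{\beta}),\qquad \mathring R=t^{2\beta-2}\mathcal R(x/t^{\beta}),\qquad \bar e=t^{2\beta-2}\mathcal E(x/t^{\beta})
\]
for $|x|<\rho_0 t^{\beta}$, and $(v,\mathring R,\bar e)=(v_s,0,\tfrac12|v_s|^2)$ for $|x|\ge\rho_0 t^{\beta}$; the exponents are forced by requiring that $v$ solve Euler--Reynolds and that $v(t,\cdot)\to v_s$ as $t\to0$, using $\beta(1-\alpha)=\beta-1$. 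In the self-similar variable $y=x/t^{\beta}$ this reduces to a time-independent system for $(V,\mathcal R,\mathcal E)$ on $\{|y|<\rho_0\}$, which the ansatz of a circular velocity $V=f(\rho)\,y^\perp/|y|$ (automatically divergence-free), a radial symmetric trace-free stress $\mathcal R$, and a radial $\mathcal E$ turns into a system of ODEs in $\rho=|y|$ (with $q$ then explicit). The profiles are to be constructed so that: \emph{(i)} near $\rho=\rho_0$, $f(\rho)=\tfrac{c_\alpha}{2-\alpha}\rho^{1-\alpha}$, $\mathcal R=0$ and $\mathcal E=\tfrac12|V|^2$, so the two pieces glue continuously and $\mathring R$, $\bar e-\tfrac12|v|^2$ extend by $0$ across $\{|x|=\rho_0 t^{\beta}\}$ and across $\{t=0\}$, whence $v|_{t=0}=v_s$; \emph{(ii)} $V$ is a smooth circular profile on $\{|y|<\rho_0\}$, in particular $f(\rho)=O(\rho)$ as $\rho\to0$ --- this regularises the vortex core of the subsolution although $v_s$ is singular there; \emph{(iii)} the strict subsolution inequality $\lambda_{\max}\!\big(V\otimes V-\mathcal R-\mathcal E\,\mathrm{Id}\big)<0$ holds on $\{0<|y|<\rho_0\}$; and \emph{(iv)} $\int_{|y|<\rho_0}\mathcal E\dif y\le\int_{|y|<\rho_0}\tfrac12|V_\infty|^2\dif y$, where $V_\infty(y)=\tfrac{c_\alpha}{2-\alpha}|y|^{1-\alpha}\,y^\perp/|y|$ is the exact power-law profile, which by the scaling is exactly the admissibility requirement $E(t)\le E(0)=\tfrac12\|v_s\|_{L^2}^2$ for all $t\in(0,T]$, with $E(t)\to E(0)$ as $t\to0$. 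Undoing the scaling, the active set $A:=\{(t,x):0<t\le T,\ 0<|x|<\rho_0 t^{\beta}\}$ collapses to the origin as $t\to0$, so the subsolution is bounded, continuous into $L^2$ up to $t=0$ with initial value $v_s$, and smooth on $(0,T]\times(\R^2\setminus\{0\})$.

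\emph{Step 2: convex integration.} On the active set $A$ I would invoke the convex-integration $h$-principle for the Euler--Reynolds system with prescribed continuous energy density, in the class $L^\infty_{t,x}\cap C_tL^2$ (the version in which the subsolution is perturbed only on an open set and the solutions attain the prescribed energy profile). By the strict inequality \emph{(iii)}, this produces a residual --- hence infinite --- family of exact weak solutions $v\in C_tL^2$, each bounded, equal to the subsolution (in particular to $v_s$) off $A$, and with $\tfrac12|v(t,x)|^2=\bar e(t,x)$ a.e.\ on $A$; hence $\tfrac12\|v(t,\cdot)\|_{L^2}^2=\int_{\R^2}\bar e(t,\cdot)=E(t)\le E(0)$ by \emph{(iv)}, so each $v$ is admissible, and since $A$ collapses to the origin each $v$ attains $v^\circ=v_s$ in $C_tL^2$. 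This gives infinitely many bounded admissible solutions of \eqref{eq:IE} with datum $v_s$, proving Theorem \ref{thm:nonuniqueness:Lp}; Theorem \ref{thm:nonuniqueness:Holder} then comes out of the same family of examples upon reading $\gamma=1-\alpha$, since for every $\gamma\in(0,1)$ one may choose $p$ large enough that $\alpha p<2$.

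\emph{Main obstacle.} Everything rests on Step 1: exhibiting a single self-similar profile that satisfies the strict subsolution inequality on a non-degenerate annulus while simultaneously having $\mathcal R$ decay to $0$ at the outer edge $\rho=\rho_0$ --- so that the datum is \emph{exactly} $v_s$ and $\mathring R$ extends by $0$ at $t=0$ --- and staying within the admissibility budget \emph{(iv)}. The self-similar reduction is what makes the first requirement tractable, trading the singular behaviour at $t=0$ for an ODE boundary condition at $\rho=\rho_0$; the real difficulty is the tension between \emph{(iii)} and \emph{(iv)}, since enlarging the spectral gap needed to dominate $\mathcal R$ also increases $\int\mathcal E$. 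I expect to resolve it by \emph{depleting the vortex core}: choosing $f$ to be a small-amplitude perturbation of $V_\infty$ that is much smaller than $V_\infty$ for small $\rho$ and lowers $\int\tfrac12 f^2$ below $\int\tfrac12|V_\infty|^2$ by more than the gap costs, the perturbation amplitude being the free parameter, tuned against the size of the Reynolds stress it generates through the $V\mapsto\mathcal R$ ODE. A secondary technical point is that the convex-integration scheme must tolerate the active set $A$ degenerating to a point at $t=0$; this is absorbed by the self-similar invariance, which rescales all the estimates on $A$ to a single fixed annulus.
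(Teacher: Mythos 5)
Your proposal follows essentially the same route as the paper: the same truncated power-law vortex datum, a radially symmetric self-similar subsolution whose Reynolds stress is supported in a disc of radius comparable to $t^{\nicefrac{1}{\alpha}}$, and the De Lellis--Sz\'ekelyhidi h-principle with a prescribed energy density to produce infinitely many bounded admissible solutions. The one step you leave open---exhibiting the profile---is carried out in the paper exactly by the mechanism you anticipate: an explicit core-depleted profile $H(\xi)=(1-a\log\xi)\xi^{1+b}$, for which the vanishing of the stress at the outer edge becomes the integral constraint $(4-\alpha)\int_0^1\xi^2H\dif\xi=1$ and the energy budget becomes $2(2-\alpha)\int_0^1\xi H^2\dif\xi<1$.
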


Let us point out where Yudovich's proof of uniqueness is not working in Theorem \ref{thm:nonuniqueness:Lp}. Notice that the term $\|v-v_s\|_{L_{t,x}^\infty}$ can still be bounded by the $L_{t,x}^\infty$-norm of $v$ and $v_s$ separately: 
the first is bounded by hypothesis, and the latter by \eqref{eq:vinfty} for $2<p<\infty$. However, now the condition \eqref{eq:omegasp} fails, namely we have $\|\omega_s\|_{L^q}=\infty$ for $q>p$, which prevents from concluding $E_{\mathrm{rel}}=0$ via \eqref{eq:Erel:3}.  Remarkably, Yudovich \cite{Yudovich95} extended his uniqueness result for unbounded vorticities for which $\|\omega_s\|_{L^p}$ has moderate growth as $p\to\infty$. Let us recall Taniuchi's (non-localized) version \cite{Taniuchi04} of this generalization (see also \cite{CMZ19}): Given a non-decreasing function $\Theta:[1,\infty)\to [1,\infty)$, a vorticity $\omega$ belongs to the \textit{Yudovich space} $Y^\Theta$ if
$$\|\omega\|_{Y^\Theta}:=\sup_{p\in[1,\infty)}\frac{\|\omega\|_{L^p}}{\Theta(p)}<\infty.$$
Then, the Euler equation \eqref{eq:IEvorticity} is globally well-posed in $Y^\Theta$ if $\Theta$ satisfies the Osgood type condition
\begin{equation}\label{eq:Osgood}
\int_2^\infty\frac{\dif p}{p\Theta(p)}=\infty.
\end{equation}
Notice that Theorem  \ref{thm:Yudovich} corresponds to the particular case $Y^1=L^1\cap L^\infty$. In this regard, it would be interesting to explore if Theorem \ref{thm:nonuniqueness:Lp} could be extended to Yudovich spaces for which \eqref{eq:Osgood} fails.\\ 

We finish the introduction with several remarks on 
Theorems \ref{thm:nonuniqueness:Holder} \& \ref{thm:nonuniqueness:Lp}.
\begin{enumerate}[(i)]
	\item Our solutions have compact support. Hence, the same results hold in any arbitrary open subset of $\R^2$, as well as in the periodic domain $\T^2$.
	\item\label{rem:data} Theorem \ref{thm:nonuniqueness:Holder} follows from Theorem \ref{thm:nonuniqueness:Lp} by the Sobolev embedding $W^{1,p}\subset C^{1-\nicefrac{2}{p}}$ for $2<p<\infty$. Furthermore, both are corollaries of our third main result Theorem \ref{thm:energy}. We have chosen to introduce them separately for clarity of presentation. In fact, we take the same $v_s$ in all the theorems. They are truncated power-law vortices
	\begin{equation}\label{eq:vs}
	v_s(x)=\chi(|x|)|x|^{-\alpha}x^\perp,
	\end{equation}
	where $\chi$ is a smooth cutoff function, and $0<\alpha< 1$ is a parameter. With this choice we have
	$v_s\in C^{1-\alpha}$ and $\omega_s\in L^{\nicefrac{2}{\alpha}-}$. The time of existence depends on the truncation, and it can be made arbitrarily large (see \eqref{eq:T}).
	\item\label{rem:construction} Our solutions $v\in C_tL^2$ are obtained by means of convex integration. They equal $v_s$ outside a disc of radius $(ct)^{\nicefrac{1}{\alpha}}$, where $0<c\leq C\alpha$ are constants that will be specified in Section \ref{sec:subsolution}. As a result, $v|_{t=0}=v_s$ and they are smooth outside $\{|x|\leq (ct)^{\nicefrac{1}{\alpha}}\}$. Inside this region
	we only know that the vorticity $\omega$ is a distribution. The question of  non-uniqueness of vorticities in 
	$L_t^\infty(L^1\cap L^p)$ remains open (see Section \ref{sec:SS}). In spite of the lack of uniqueness and regularity, these velocities are close in average to a subsolution of the form
	\begin{equation}\label{eq:barv}
	\bar{v}(t,x)=\chi(|x|)\frac{h(t,|x|)}{|x|}x^\perp,
	\end{equation}
	where $h$ is a self-similar profile
	\begin{equation}\label{eq:h}
	h(t,r)=(ct)^{\frac{1-\alpha}{\alpha}}H(\xi),
	\quad\quad
	\xi=\frac{r}{(ct)^{\nicefrac{1}{\alpha}}}.
	\end{equation}
	We will declare $H(\xi)=\xi^{1-\alpha}$ for $\xi\geq 1$, or equivalently $\bar{v}=v_s$ for $|x|\geq (ct)^{\nicefrac{1}{\alpha}}$. Our central task will be therefore to find a suitable profile $H$ on $[0,1]$.
	\item The construction explained in \ref{rem:construction} is also valid for $1\leq\alpha <2$. Thus, Theorem \ref{thm:nonuniqueness:Lp} holds for $1<p\leq 2$, but removing the property ``bounded''. The borderline case $\alpha\to2$ corresponds to a point vortex. This will be analyzed in Section \ref{sec:alpha2}.
	\item We can impose our solutions to conserve the energy. The energy dissipation rate of the subsolution vanishes at $t=0$ if and only if $\alpha <\nicefrac{4}{3}$. This corresponds to the Onsager critical exponent $p=\nicefrac{3}{2}$ in terms of $L^p$ control on vorticity in 2D (see Section \ref{sec:energy}).
\end{enumerate}

\begin{thm}\label{thm:energy} 
	There exists a subsolution $(\bar{v},\bar{\sigma},\bar{q})$ to the Euler equation which agrees with \eqref{eq:vs} outside $\{|x|\leq (ct)^{\nicefrac{1}{\alpha}}\}$.
	Furthermore, the energy dissipation rate equals
	$$\partial_t\bar{E}
	=-\frac{\pi}{16}
	c^{\frac{8-\alpha}{2\alpha}} t^{\frac{4-3\alpha}{\alpha}}
	\quad
	\textrm{with}
	\quad
	c=\left(\frac{2\alpha}{4-\alpha}\right)^2.
	$$
	In particular, $\partial_t\bar{E}|_{t=0}=0$ if and only if $\alpha <\nicefrac{4}{3}$, or equivalently $\omega^\circ\in L^{\nicefrac{3}{2}}$.
\end{thm}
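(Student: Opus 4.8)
The plan is to reduce the construction of the subsolution $(\bar v,\bar\sigma,\bar q)$ to a one-dimensional problem for the self-similar profile $H$ of \eqref{eq:barv}--\eqref{eq:h} on $[0,1]$, and then to read off $\partial_t\bar E$ from a single integral in $\xi$. Recall that a subsolution satisfies $\partial_t\bar v+\mathrm{div}\,\bar\sigma+\nabla\bar q=0$ and $\mathrm{div}\,\bar v=0$ with $\bar\sigma$ symmetric, together with a generalized energy density $\bar e$ for which $\bar v\otimes\bar v-\bar\sigma\preceq\bar e\,\mathrm{Id}$, the inequality being strict on an open ``turbulent'' set, and $\bar E(t)=\int_{\R^2}\bar e(t,\cdot)\dif x$, so the object to compute is $\partial_t\bar E$. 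I would take $\bar v$ of the circular form \eqref{eq:barv} (automatically divergence-free) and look for $\bar\sigma$ among the axisymmetric symmetric traceless tensors $\bar\sigma=\mu(t,r)\tfrac{x^\perp\otimes x^\perp-x\otimes x}{r^2}+\nu(t,r)\tfrac{x\otimes x^\perp+x^\perp\otimes x}{r^2}$, a trace part being absorbed into $\bar q$. A short computation shows that $\mathrm{div}$ of the $\mu$-term is radial (hence a gradient), while $\mathrm{div}$ of the $\nu$-term equals $\bigl(\nu'+\tfrac{2\nu}{r}\bigr)\tfrac{x^\perp}{r}$; since $\partial_t\bar v=\tfrac{\partial_t h}{r}\,x^\perp$, the azimuthal part of the momentum equation forces
\begin{equation*}
r^2\,\nu(t,r)=-\int_0^r s^2\,\partial_t h(t,s)\dif s,
\end{equation*}
whereas $\mu$ remains free. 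Diagonalizing $\bar v\otimes\bar v-\bar\sigma$ in the orthonormal frame $(x/r,\,x^\perp/r)$, its largest eigenvalue is $\tfrac12 h^2+\sqrt{(\mu-\tfrac12 h^2)^2+\nu^2}$, minimized at $\mu=\tfrac12 h^2$; I would make this choice, so that the smallest admissible $\bar e$ is $\bar e=\tfrac12|\bar v|^2+|\nu|$ (up to an arbitrarily small positive perturbation supported in the disc, needed only for strictness, which does not affect the energy at leading order).

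Inserting the self-similar form $h=(ct)^{\frac{1-\alpha}{\alpha}}H(\xi)$, $\xi=r(ct)^{-1/\alpha}$, one gets $\partial_t h=\tfrac{(ct)^{(1-\alpha)/\alpha}}{\alpha t}\bigl((1-\alpha)H-\xi H'\bigr)$, hence $\nu=c\,(ct)^{\frac{2(1-\alpha)}{\alpha}}N(\xi)$ with $N(\xi)=-\tfrac{1}{\alpha\xi^2}\int_0^\xi\zeta^2\bigl((1-\alpha)H(\zeta)-\zeta H'(\zeta)\bigr)\dif\zeta$. The profile $H$ on $[0,1]$ is then pinned down by: (i) $H(1)=1$ and $H'(1)=1-\alpha$, so that $\bar v=v_s$ and the stress glue across $\xi=1$ without concentrating vorticity there; (ii) regularity at the origin, $H(\xi)=O(\xi)$; (iii) the gluing condition $\nu\equiv0$ on $\{\xi\ge1\}$, which after one integration by parts becomes $\int_0^1\xi^2 H(\xi)\dif\xi=\tfrac{1}{4-\alpha}$ --- the first occurrence of $4-\alpha$; (iv) $\nu\not\equiv0$ on an open subset of $(0,1)$, so that the subsolution is strict there and the convex integration scheme of Section \ref{sec:CI} yields infinitely many admissible $v\in C_tL^2$ with $\tfrac12|v|^2=\bar e$; and (v) $\bar E(t)\le\bar E(0)$. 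An explicit low-degree polynomial meets (i)--(iii), and (iv)--(v) together with the normalization that makes the construction of Section \ref{sec:subsolution} close up consistently single out the growth rate $c=\bigl(\tfrac{2\alpha}{4-\alpha}\bigr)^2$. With this data $\bar e=(ct)^{\frac{2(1-\alpha)}{\alpha}}\mathcal{E}(\xi)$, where $\mathcal{E}=\tfrac12 H^2+c|N|$ on $[0,1]$ and $\mathcal{E}(\xi)=\tfrac12\xi^{2-2\alpha}$ for $\xi\ge1$.

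For the energy identity, since $\bar v=v_s$ outside the disc one has $\bar E(t)=E_s+\int_{\{|x|\le(ct)^{1/\alpha}\}}\bigl(\bar e-\tfrac12|v_s|^2\bigr)\dif x$, with $E_s=\tfrac12\|v_s\|_{L^2}^2$ constant in $t$; the change of variables $r=(ct)^{1/\alpha}\xi$ turns the disc integral into $2\pi(ct)^{\frac{4-2\alpha}{\alpha}}\int_0^1\bigl(\mathcal{E}(\xi)-\tfrac12\xi^{2-2\alpha}\bigr)\xi\dif\xi$. Differentiating in $t$ produces exactly the power $t^{\frac{4-3\alpha}{\alpha}}$ and the constant $2\pi\,\tfrac{4-2\alpha}{\alpha}\,c^{\frac{4-2\alpha}{\alpha}}\int_0^1(\cdots)\xi\dif\xi$; evaluating that one-dimensional integral on the explicit profile and using $c=(2\alpha/(4-\alpha))^2$, so that $c^{\frac{4-2\alpha}{\alpha}}=c^{-3/2}c^{\frac{8-\alpha}{2\alpha}}$, the constant collapses to $-\tfrac{\pi}{16}c^{\frac{8-\alpha}{2\alpha}}$, which is the stated formula. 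Then $\partial_t\bar E|_{t=0}=0$ iff the exponent $\tfrac{4-3\alpha}{\alpha}$ is positive, i.e.\ iff $\alpha<\nicefrac{4}{3}$; and since $\omega_s=\mathrm{rot}\,v_s\sim(2-\alpha)|x|^{-\alpha}$ near the origin and has compact support, $\omega_s\in L^{\nicefrac{3}{2}}$ iff $\nicefrac{3}{2}<\nicefrac{2}{\alpha}$, i.e.\ iff $\alpha<\nicefrac{4}{3}$, which is the claimed equivalence.

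The momentum reduction, the eigenvalue bound and the final integral above are routine; the real difficulty is the second step, namely exhibiting a profile $H$ on $[0,1]$ that simultaneously matches $v_s$ to first order at $\xi=1$, is regular at the origin, satisfies $\int_0^1\xi^2 H=\tfrac{1}{4-\alpha}$, keeps the subsolution inequality strict on a set of positive measure, and is consistent with the prescribed $c$ --- the self-similar analogue of finding an admissible subsolution in a convex integration problem. One must also check that the glued field is a genuine distributional subsolution across $\{|x|=(ct)^{1/\alpha}\}$, which is ensured by the first-order matching of $H$ (no vorticity concentrates on the interface).
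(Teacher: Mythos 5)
Your reduction is essentially the paper's: the rotating ansatz $\bar v=h(t,r)\frac{x^\perp}{r}$, the azimuthal stress component forced by $r^2\nu=-\int_0^r s^2\partial_t h\,\mathrm{d}s$, the free radial component minimized at $\mu=\tfrac12h^2$ so that $\bar e=\tfrac12h^2+|\nu|$, the self-similar profile $H$, the gluing condition $(4-\alpha)\int_0^1\xi^2H\,\mathrm{d}\xi=1$, and the change of variables producing the power $t^{\frac{4-3\alpha}{\alpha}}$ all coincide with Propositions \ref{prop:polar}--\ref{prop:h2} and Section \ref{sec:subsolution} (your $\nu$ is $-w_2$ and your $N$ is $W_2/\alpha$), and the final equivalences ``exponent positive iff $\alpha<\nicefrac{4}{3}$ iff $\omega^\circ\in L^{\nicefrac{3}{2}}$'' are fine.

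The gap is in the quantitative heart of the theorem, which you leave as an assertion. Writing $A=\tfrac12-(2-\alpha)\int_0^1\xi H^2\,\mathrm{d}\xi$ and $B=\int_0^1|W_2|\,\xi\,\mathrm{d}\xi$, the dissipation rate is $-\frac{2\pi}{\alpha}c^{\frac{2(2-\alpha)}{\alpha}}\bigl(A-\frac{2(2-\alpha)}{\alpha}Bc\bigr)t^{\frac{4-3\alpha}{\alpha}}$, so both the prefactor and the admissible values of $c$ depend on the profile. Two consequences: (a) $c$ is \emph{not} ``singled out by the normalization''; it is a free parameter, constrained by admissibility to $0<c<\frac{\alpha}{2(2-\alpha)}\frac{A}{B}$ because the $|\nu|$-part of $\bar E$ \emph{grows} in time and competes with the decay of the kinetic part --- a sign condition ($A>0$, i.e.\ $2(2-\alpha)\int_0^1\xi H^2<1$) and a competition you never check; the specific value $c=\frac{\alpha}{4-\alpha}\frac{A}{B}$ is then a \emph{choice} (the maximizer of the dissipation rate). (b) You never exhibit the profile: a low-degree polynomial meeting your (i)--(iii) exists, but a generic such choice gives different $A,B$, hence a different optimal $c$ and a different prefactor, not the stated $\frac{\pi}{16}$ and $\bigl(\frac{2\alpha}{4-\alpha}\bigr)^2$. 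The paper's profile is $H(\xi)=\bigl(1-\frac{4\alpha}{4-\alpha}\log\xi\bigr)\xi$ (not a polynomial), for which $A=\frac{\alpha^3}{4(4-\alpha)^2}$ and $B=\frac{\alpha^2}{16(4-\alpha)}$, and only after evaluating these two integrals do the constants in the statement come out; without producing some $H$ and doing that computation, the displayed formula for $\partial_t\bar E$ is unproven. Minor points: your matching condition $H'(1)=1-\alpha$ is not needed (continuity of $\bar v,\bar\sigma,\bar q$ across $\xi=1$ suffices; a jump in $H'$ gives only a bounded jump of vorticity, not a sheet), and the paper's own profile violates it; and you gloss over the truncation by $\chi$ that makes the energy finite and restricts the statement to $t\leq T=r_0^\alpha/c$, which is routine but should be said.
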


\begin{figure}[h!]
	\centering
	\includegraphics[width=0.71\textwidth]{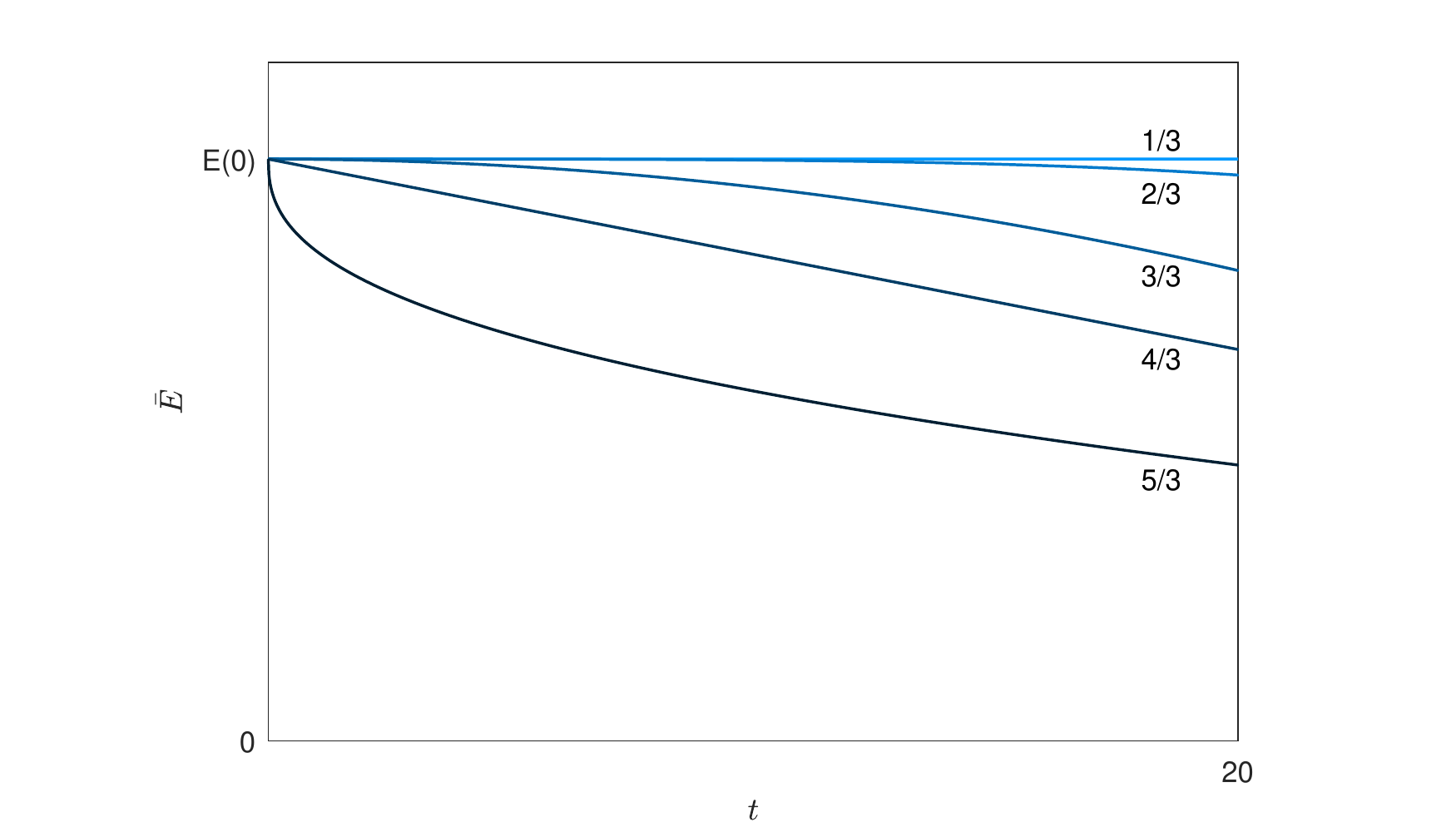}
	\caption{From lighter to darker blue,
		plot of the energy $\bar{E}(t)$ of the subsolution $(\bar{v},\bar{\sigma},\bar{q})$ for the powers $\alpha=\nicefrac{k}{3}$ with $k=1,2,3,4,5$. The initial energy $E(0)$ is taken independently of $\alpha$. For $\alpha=\nicefrac{1}{3},\nicefrac{2}{3}$ the energy is almost constant. The power $\alpha=1$ corresponds to the threshold for bounded velocities.
		The power $\alpha=\nicefrac{4}{3}$ corresponds both to the threshold for $L^{\nicefrac{3}{2}}$ vorticities and $\partial_t\bar{E}|_{t=0}=0$. For $\alpha=\nicefrac{5}{3}$ the energy decreases faster.}
	\label{fig:energy}
\end{figure}

\begin{figure}[h!]
	\centering
	\includegraphics[width=0.71\textwidth]{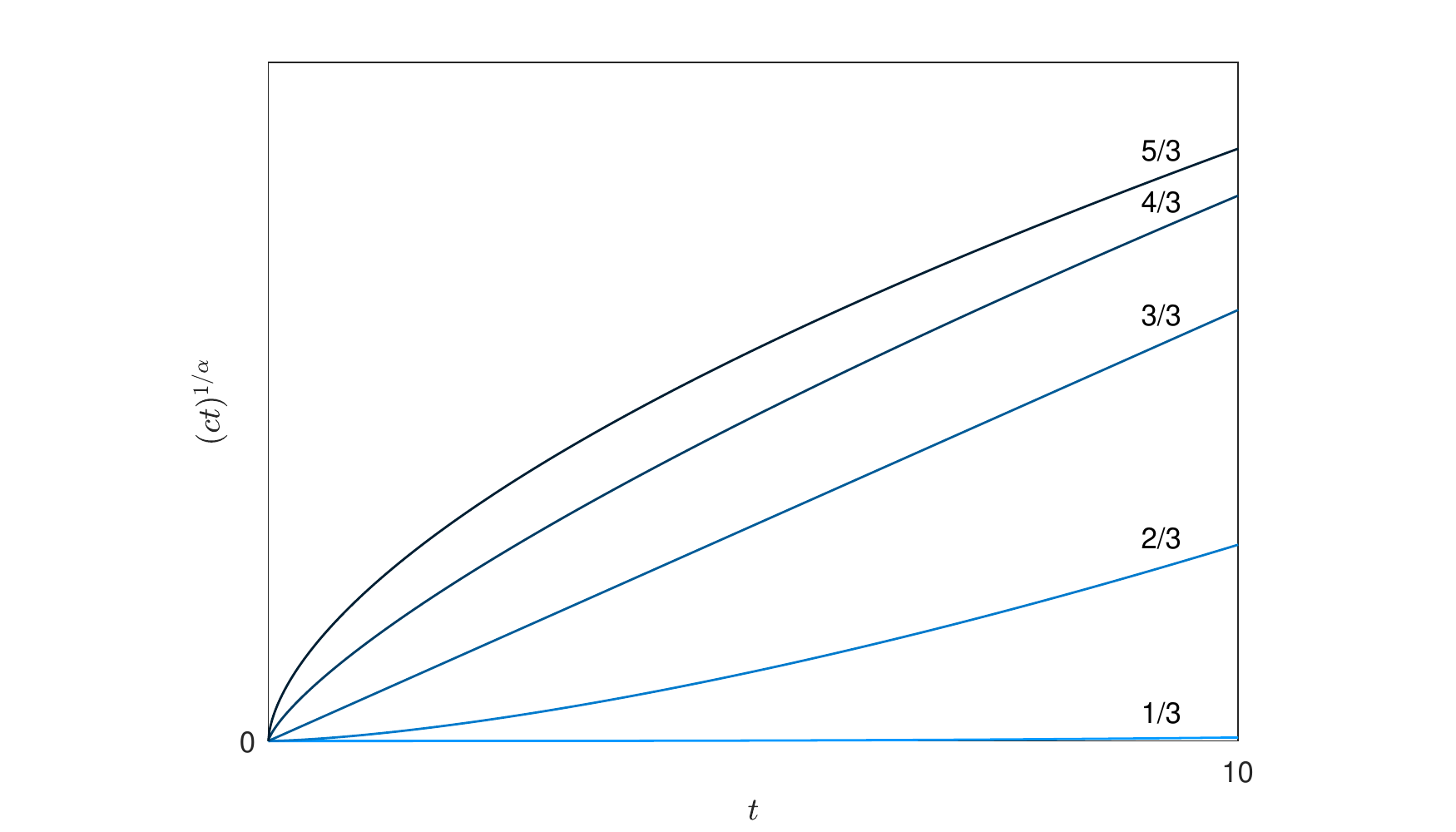}
	\caption{From lighter to darker blue,
		plot of the boundary $(ct)^{\nicefrac{1}{\alpha}}$ for the powers $\alpha=\nicefrac{k}{3}$ with $k=1,2,3,4,5$. This shows how the region $\{|x|\leq (ct)^{\nicefrac{1}{\alpha}}\}$ shrinks as $\alpha\to 0$.}
	\label{fig:turzone}
\end{figure}

\section{Brief background}\label{sec:background}

In this section we review briefly the literature on non-uniqueness, energy conservation and admissibility criteria for the Euler equation and compare it with the present work.

\subsection{Self-similarity and symmetry breakdown}\label{sec:SS}
In the context of Yudovich's well-posedness Theorem, if the initial vorticity is not bounded but at least $\omega^\circ\in L^1\cap L^p$ for some $1<p<\infty$, DiPerna and Majda \cite{DiPernaMajda87} proved the existence of a global solution $v\in L_t^\infty L^2$ with $\omega\in L_t^\infty(L^1\cap L^p)$ to the Euler equation (see also \cite[Chapter 10]{MajdaBertozzi02}). 
However, uniqueness is not expected to hold in general in this class. Roughly speaking, for $2<p<\infty$ the Sobolev embedding implies continuity of the velocity field, and thus existence of the Lagrangian map $X$ by the Peano Theorem, but the modulus of continuity does not satisfy the Osgood uniqueness criterion. Let us formulate this question as a conjecture, which remains open to the best of our knowledge. 

\begin{conjecture}\label{conjecture:Lp} For any $2<p<\infty$ there exists $v^\circ\in L^2$ with $\omega^\circ\in L^1\cap L^p$, with the property that there is more than one weak solution $v\in C_t L^2$ with $\omega\in L_t^\infty(L^1\cap L^p)$ to the Euler equation \eqref{eq:IEvorticity}.
\end{conjecture}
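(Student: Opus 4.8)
The decisive difference from Theorem~\ref{thm:nonuniqueness:Lp} is that the second solution is now required to keep its vorticity in $L^1\cap L^p$, whereas the convex integration scheme underlying Theorem~\ref{thm:nonuniqueness:Lp} controls only $v\in C_tL^2$ and leaves $\omega$ as a bare distribution inside the core $\{|x|\leq(ct)^{1/\alpha}\}$ (remark~\ref{rem:construction}). Convex integration is therefore the wrong tool here, and I would instead try to produce a \emph{genuine} second solution through a self-similar spectral instability of the power-law vortex, in the spirit of Vishik-type constructions of non-unique $L^p$-vorticity solutions. Since the conjecture only asks for \emph{more than one} solution, it suffices to build a single trajectory that bifurcates from the steady vortex while sharing its initial datum.

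The plan is to reuse the same background $v_s$ from \eqref{eq:vs}, whose inner vorticity $\omega_s=(2-\alpha)|x|^{-\alpha}$ lies in $L^1\cap L^p$ whenever $\alpha<2/p$ (so any prescribed $2<p<\infty$ is attained by taking $\alpha$ small enough), and to pass to self-similar variables
\[
\omega(t,x)=t^{-1}\Omega(\tau,\xi,\theta),\qquad \xi=\frac{r}{(ct)^{1/\alpha}},\qquad \tau=\log t.
\]
In these variables the radial vortex becomes a $\tau$-independent steady state $\Omega_s(\xi)\propto\xi^{-\alpha}$ of an autonomous equation whose generator is the linearized Euler operator together with the self-similar dilation and scaling terms (morally $-\tfrac1\alpha\xi\partial_\xi$ and a zeroth order shift). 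Linearizing about $\Omega_s$ and decomposing in angular modes $e^{im\theta}$ reduces the eigenvalue problem to a family of radial operators $L_m$. Crucially, the \emph{untruncated} profile $\xi^{-\alpha}$ is exactly scale invariant, so each $L_m$ commutes with dilations; a Mellin transform in $\log\xi$ then diagonalizes $L_m$ and converts the search for an eigenvalue $\lambda$ with $\mathrm{Re}\,\lambda>0$ into an explicit dispersion relation $D_m(\lambda)=0$. This is the main structural advantage over the compactly supported profiles of Vishik, where the spectral problem cannot be solved in closed form.

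Given an unstable root $\lambda$ of $D_m$ for some $m\geq2$, I would run the standard unstable-manifold / spiralling argument: using the growing eigenmode as a seed, construct a nonlinear self-similar solution $\Omega(\tau)$ with $\Omega(\tau)\to\Omega_s$ as $\tau\to-\infty$ (i.e.~$t\to0^+$). This produces a weak solution that differs from the steady $v_s$ for $t>0$ yet starts from the same $\omega^\circ=\omega_s$. The cutoff $\chi$ would be reinstated last and treated as a perturbation supported away from the origin, where the instability concentrates; one then checks that $\Omega$ decays to match the power-law tail so that $\omega\in L_t^\infty(L^1\cap L^p)$ uniformly up to $t=0$.

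The hard part is the spectral step: exhibiting an eigenvalue with $\mathrm{Re}\,\lambda>0$. A monotone radial vorticity such as $\xi^{-\alpha}$ is linearly stable for the bare Euler dynamics, so any instability must come from the dilation term, which shifts the spectrum rightward in self-similar time; proving that this shift actually pushes a genuine eigenvalue into the right half-plane — while controlling the (non-local, non-normal) essential spectrum generated by the Biot-Savart law — is the crux and is exactly what remains open. Secondary difficulties are the nonlinear closure (semigroup estimates on the growing subspace for a non-self-adjoint, non-local operator), the reconciliation of the cutoff with exact self-similarity, and the uniform-in-time $L^p$ bound on $\omega$ near $t=0$, where the self-similar profile is most singular.
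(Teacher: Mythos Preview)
The statement you are addressing is labeled a \emph{Conjecture} in the paper, and the paper explicitly says it ``remains open to the best of our knowledge.'' There is therefore no proof in the paper to compare your attempt against; the paper only proves the weaker Theorem~\ref{thm:nonuniqueness:Lp}, where the vorticity of the second solution is merely distributional, and discusses Vishik's Theorem~\ref{thm:Vishik}, which achieves the $L^1\cap L^p$ vorticity class only with an external force.

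Your proposal is not a proof but a program, and you yourself identify the gap: the spectral step. Two remarks on the strategy. First, the approach you sketch is essentially Vishik's, and the paper already explains why Vishik needed a force: his unstable profile is \emph{not} the pure power-law vortex but a carefully designed non-monotone modification of it, precisely because (as you note) monotone radial vorticities satisfy Rayleigh's stability criterion. Your hope that the dilation shift in self-similar variables produces a genuine unstable eigenvalue for the \emph{pure} power law $\xi^{-\alpha}$ is attractive, and the Mellin-diagonalization idea is natural given exact scale invariance, but there is no argument offered that the shifted spectrum contains an isolated eigenvalue rather than only essential spectrum; for a scale-invariant background the Mellin side typically yields continuous spectrum along vertical lines, and extracting a discrete mode from that is exactly the obstruction. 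Second, even granting an unstable eigenvalue, the nonlinear closure without a force is nontrivial: in Vishik's construction the force is defined \emph{a posteriori} to absorb the error between the linear unstable trajectory and a true Euler solution, and removing it requires either an exact invariant-manifold argument in a function space where the 2D Euler semigroup is well behaved, or a different mechanism altogether. Neither is supplied here. So the proposal correctly locates the difficulty but does not resolve it, which is consistent with the conjecture's open status.
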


Global existence for the 2D Euler equation \eqref{eq:IEvorticity} has been proved in other vorticity classes (see e.g.~the
recent work of Crippa and Stefani \cite{CrippaStefani21} and the references therein).
In \cite{Taniuchi04} 
Taniuchi proved global existence in the Yudovich space $Y^\Theta$ if the Osgood condition \eqref{eq:Osgood} is weakened by
\begin{equation}\label{eq:Osgoodlog}
\int_2^\infty\frac{\dif p}{p\Theta(\log p)}=\infty.
\end{equation}
More precisely, he proved global existence in uniformly-localized Yudovich spaces including $\mathbf{bmo}$. In this regard, Conjecture \ref{conjecture:Lp} could be also stated for Yudovich spaces satisfying the existence condition \eqref{eq:Osgoodlog} but not the uniqueness condition \eqref{eq:Osgood}.

In the recent groundbreaking work \cite{Vishik18I,Vishik18II} Vishik solved Conjecture \ref{conjecture:Lp} for the forced  Euler equation (see also the notes \cite{ABCDGJK21})
\begin{subequations}\label{eq:IEvorticityforced}
\begin{align}
\partial_t\omega+\mathrm{div}(v\omega)&=f, \label{eq:IEvorticityforced:1}\\
v&=\nabla^\perp\Delta^{-1}\omega,\\
\omega|_{t=0}&=\omega^\circ.
\end{align}
\end{subequations}

\begin{thm}[Vishik's non-uniqueness Theorem]\label{thm:Vishik} For any $2<p<\infty$ there exists $v^\circ\in L^2$ with $\omega^\circ\in L^1\cap L^p$ and a force $f=\mathrm{rot}g\in L_t^1(L^1\cap L^p)$ with $g\in L_t^1 L^2$, with the property that there are infinitely many weak solutions $v\in C_t L^2$ with $\omega\in L_t^\infty(L^1\cap L^p)$ to the forced  Euler equation \eqref{eq:IEvorticityforced}.
\end{thm}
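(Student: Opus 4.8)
The plan is to reproduce Vishik's mechanism: manufacture non-uniqueness out of a single datum by exhibiting a self-similar vortex that is \emph{linearly unstable} in similarity variables, and then opening up a one-parameter family of solutions along the associated unstable manifold, all sharing the same initial vorticity and the same force. Fix $2<p<\infty$. The starting point is the scaling symmetry of \eqref{eq:IEvorticity} that preserves $\|\omega(t,\cdot)\|_{L^p}$, namely $\omega(t,x)\mapsto\lambda^{2/p}\omega(\lambda^{2/p}t,\lambda x)$; the invariant similarity variables are $\xi=x\,t^{-p/2}$, $\tau=\log t$, and the ansatz $\omega(t,x)=t^{-1}\Omega(\tau,\xi)$ turns the forced equation \eqref{eq:IEvorticityforced} into the autonomous system
\begin{equation*}
\partial_\tau\Omega-\tfrac{p}{2}\,\xi\cdot\nabla_\xi\Omega-\Omega+V_\Omega\cdot\nabla_\xi\Omega=F,
\qquad V_\Omega=\nabla^\perp\Delta^{-1}\Omega ,
\end{equation*}
where $F$ is the similarity profile of the force. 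A radial profile $\bar\Omega=\bar\Omega(|\xi|)$ induces an azimuthal velocity, so its transport term vanishes and it is stationary provided $F=-\tfrac{p}{2}\xi\cdot\nabla\bar\Omega-\bar\Omega$; for the pure power law $\bar\Omega(\xi)=|\xi|^{-2/p}$ this forcing vanishes identically and we recover the scale-invariant steady vortex $\omega^\circ(x)=|x|^{-2/p}$. Since the bare power law is only borderline integrable (it misses $L^p$ near the origin and $L^1$ at infinity), I would regularize the core and truncate the tail; both operations perturb the profile away from exact self-similarity and thereby generate a genuine force $F$, supported in the core and in the truncation annulus. The upshot is a nonzero datum $\omega^\circ\in L^1\cap L^p$ together with a force that, by construction, is $f=\mathrm{rot}\,g$ with the behaviour as $t\to0$ arranged so that $f\in L_t^1(L^1\cap L^p)$ and $g\in L_t^1L^2$.

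The analytical heart is the linearization. Writing $\Omega=\bar\Omega+\phi$, the perturbation obeys $\partial_\tau\phi=L_{\mathrm{ss}}\phi+Q(\phi)$ with quadratic nonlinearity $Q(\phi)=-V_\phi\cdot\nabla\phi$ and
\begin{equation*}
L_{\mathrm{ss}}\phi=\tfrac{p}{2}\,\xi\cdot\nabla\phi+\phi-V_{\bar\Omega}\cdot\nabla\phi-V_\phi\cdot\nabla\bar\Omega ,
\end{equation*}
the last two terms being the Rayleigh linearization about the vortex and the first two the scaling drift. Expanding in angular modes $\phi=\sum_m e^{im\theta}\phi_m(\rho)$, $\rho=|\xi|$, reduces $L_{\mathrm{ss}}$ to a family of non-self-adjoint operators $L_m$ on the half-line. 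The crucial --- and genuinely hard --- claim is that for a carefully chosen radial profile and some mode $m$, $L_m$ has an eigenvalue $\lambda$ with $\Re\lambda>0$ and a suitably decaying eigenfunction $\eta$. This is Vishik's central theorem and the main obstacle. I would establish it by converting the eigenvalue problem into a singular Rayleigh-type ODE in $\rho$, building the associated dispersion (Evans-type) function $D(\lambda)$ whose zeros in $\{\Re\lambda>0\}$ are the unstable eigenvalues, and counting those zeros by the argument principle along a large contour in the right half-plane, showing the winding number is positive for a well-designed profile. The difficulties are structural: $L_m$ is non-normal, the ODE is singular at $\rho=0$ and along critical layers, and one must separately pin down the essential spectrum and verify that $\lambda$ is isolated and simple.

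With $(\lambda,\eta)$ in hand, I would build the unstable manifold. For each amplitude $a\in\R$ I seek a solution of the nonlinear similarity equation on $(-\infty,\tau_0]$ with prescribed asymptotics $\phi(\tau)\sim a\,\Re\!\big(e^{\lambda\tau}\eta\big)$ as $\tau\to-\infty$. Recasting this as a Duhamel integral equation and exploiting that the unstable part of the semigroup decays like $e^{\Re\lambda\,\tau}$ as $\tau\to-\infty$ while $Q$ is quadratic, a contraction on a space weighted by $e^{\Re\lambda\,\tau}$ produces, for every $a$, a unique trajectory converging to $\bar\Omega$ as $\tau\to-\infty$. Since $\tau\to-\infty$ corresponds to $t\to0^+$, each member of the family $\{\Omega^{(a)}\}$ returns in physical variables to the \emph{same} initial datum $\omega^\circ$ and is driven by the \emph{same} force, yet distinct amplitudes give distinct solutions because they already differ at order $e^{\Re\lambda\,\tau}$ for $\tau<\tau_0$; this yields infinitely many solutions. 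It remains to read off the regularity: the $L^p$-invariance of the scaling bounds $\|\omega^{(a)}(t)\|_{L^p}$ uniformly, the decay of the truncated profile gives the $L^1$ bound, so $\omega^{(a)}\in L_t^\infty(L^1\cap L^p)$, and $v^{(a)}=\nabla^\perp\Delta^{-1}\omega^{(a)}\in C_tL^2$ follows from the two-dimensional Biot--Savart estimate together with the time continuity supplied by the construction, which completes the argument.
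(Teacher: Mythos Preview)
This theorem is not proved in the paper; it is quoted as Vishik's result from \cite{Vishik18I,Vishik18II} (see also \cite{ABCDGJK21}), and the paper offers only a brief description of the strategy: a self-similar radial background $\bar v$ that modifies the power-law vortex $\beta|x|^{-\alpha}x^\perp$ inside a disc of radius $\sim t^{1/\alpha}$, the force defined \emph{ad hoc} as the Euler residual of $\bar v$, an unstable eigenvalue $\lambda$ for the associated Rayleigh operator, and $m$-fold symmetric solutions deviating from $\bar v$ like $t^\lambda$. Your outline reproduces precisely this architecture --- similarity variables, linearization about a radial profile, the unstable eigenvalue as the crux, and an unstable-manifold construction yielding the one-parameter family --- so it is fully consistent with both the paper's summary and Vishik's actual proof.

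One refinement: in your sketch you take the exact critical power $\bar\Omega=|\xi|^{-2/p}$ and then propose to ``regularize the core'' to cure the $L^p$ failure at the origin. In fact the initial datum $\omega^\circ(x)=\lim_{t\to 0}t^{-1}\bar\Omega(x\,t^{-p/2})$ is determined by the \emph{tail} of $\bar\Omega$, not the core (since $\xi\to\infty$ as $t\to 0^+$); one simply takes the power slightly subcritical, $\alpha<2/p$, so that the truncated power law already lies in $L^1\cap L^p$ --- this is the datum \eqref{eq:vs} that the paper says is shared with Theorem~\ref{thm:nonuniqueness:Lp}. The core modification serves a different purpose: it is what allows the linearized operator to possess a genuine unstable eigenvalue (the pure power law being too rigid), and the paper notes that this requires $\beta$ sufficiently large. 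You correctly flag this spectral step as the hard part; the paper does not attempt it.
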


Theorem \ref{thm:nonuniqueness:Lp} has certain connections with Vishik's non-uniqueness Theorem. Firstly, both theorems share the same initial data \eqref{eq:vs}. Secondly, Vishik's construction is also based on finding a suitable self-similar velocity $\bar{v}$. Furthermore, $\bar{v}$ is a modification of the power-law vortex $\beta|x|^{-\alpha}x^\perp$ in a disc of radius $2t^{\nicefrac{1}{\alpha}}$, where $\beta$ is a sufficiently large constant. Thirdly, $\bar{v}$ is also truncated by $\chi$ to guarantee integrability at infinity. In spite of these similarities, both the results and the proofs differ significantly. 
Concerning the results, Vishik's non-uniqueness Theorem solves Conjecture \ref{conjecture:Lp} in the natural vorticity class $L^1\cap L^p$ by introducing a force, while Theorem \ref{thm:nonuniqueness:Lp} shows non-uniqueness without forcing by considering distributional vorticities. Our original motivation was indeed to explore the possibility of removing the force in Theorem \ref{thm:Vishik} by means of convex integration, but paying the price of the low-regularity inherent to these constructions. The first obvious attempt was to absorb Vishik's force into the Reynolds stress. However, it was not immediate for us that the corresponding subsolution was admissible. More precisely, we first needed to derive the conditions under which a radially symmetric self-similar subsolution yields admissible solutions via convex integration. After this, instead of checking if Vishik's vortex satisfies these conditions, it becomes easier to construct our own profile $H$ (recall \eqref{eq:h}). This is because Vishik's condition for $H$ is much less explicit, namely it must have an unstable eigenvalue $\lambda$ associated to the Rayleigh stability equation. 
In a \textit{tour de force}, Vishik proved in \cite{Vishik18I,Vishik18II} the existence of such an unstable vortex $\bar{v}$ and, after defining \textit{ad hoc} the force $f$ by \eqref{eq:IEvorticityforced:1}, he checked carefully the existence of other $m$-fold symmetric velocities $v$ deviating from $\bar{v}$ as $t^\lambda$ (see also \cite{ABCDGJK21}).
Remarkably, Albritton, Bru\'e, and Colombo \cite{ABC22} proved recently non-uniqueness of Leray solutions for the forced 3D Navier-Stokes equation by adapting properly Vishik's unstable vortex into the cross section of an axisymmetric without swirl vortex ring. This approach is framed within the program of Jia, \v{S}ver\'ak, and Guillod \cite{JiaSverak15,GuillodSverak17} on the conjectural non-uniqueness of Leray solutions of the (unforced) 3D Navier-Stokes equation.

In the recent investigation \cite{BressanMurray20,BressanShen21} Bressan, Murray, and Shen showed numerical evidence toward the validity of Conjecture \ref{conjecture:Lp}. Their work is also based on self-similarity and symmetry breakdown: the initial vorticity is of the form $\omega^\circ=\omega_s\Omega$, where $\omega_s$ is the vorticity of \eqref{eq:vs} and $\Omega$ is a suitable smooth function which only depends on the polar angle. 
Their approach suggests two different ways of regularizing $\omega^\circ$ leading to either one or two algebraic spirals.
In contrast to Vishik's spectral analysis, their construction relies on a smart system of adapted coordinates due to Elling \cite{Elling16a,Elling16b} (see also the recent work of Garc\'ia and G\'omez-Serrano for the generalized SQG equation \cite{GarciaGomezSerrano22}).

\subsection{Convex integration}\label{sec:CI}
In this work we deal with a weaker version of Conjecture \ref{conjecture:Lp}: the integrability condition $\omega\in L_t^\infty(L^1\cap L^p)$ is removed (for $t>0$) and then necessarily \eqref{eq:IEvorticity} is replaced by \eqref{eq:IE}. The first result in this direction is due to Scheffer \cite{Scheffer93}: there exist Euler velocities $v\in L_{t,x}^2$ with compact support in space-time. Latter, this construction was simplified by Shnirelman \cite{Shnirelman97}. In the seminal work \cite{DeLellisSzekelyhidi09} De Lellis and Sz\'ekelyhidi proved the same result in the energy space $L_t^\infty L^2$, and for any space dimension $d\geq 2$, by adapting Gromov's convex integration method and Tartar's plane wave analysis into Hydrodynamics. Observe that these solutions show non-uniqueness for the trivial initial datum $v^\circ=0$. Non-uniqueness in $L_t^\infty L^2$ was generalized by Wiedemann \cite{Wiedemann11} for every divergence-free $v^\circ\in L^2$ (see \cite{KMY22} for recent improvements of the regularity). For smooth initial data, the aforementioned solutions necessarily increase the energy, as a consequence of the weak-strong uniqueness principle. In this sense, a divergence-free $v^\circ\in L^2$ is called \textit{wild} if it admits infinitely many admissible solutions $v\in L_t^\infty L^2$.
In \cite{DeLellisSzekelyhidi10} De Lellis and Sz\'ekelyhidi initiated the investigation on non-uniqueness of admissible solutions, upon which this work is based. Recall that the property ``admissible'' includes both conservative ($E=E(0)$) and dissipative ($E<E(0)$) solutions. 
In his famous work \cite{Onsager49} Onsager conjectured, in the context of the zeroth law of turbulence, the threshold regularity for the validity of the energy conservation of weak solutions to the Euler equation (in $\T^3$). Onsager's conjecture, which is nowadays a theorem, can be stated as follows:
\begin{enumerate}[(a)]
	\item\label{Onsager:a} Any weak solution $v\in C_t C^\gamma$ to the Euler equation with $\gamma>\nicefrac{1}{3}$ conserves the energy.
	\item\label{Onsager:b} For any $0<\gamma<\nicefrac{1}{3}$ there exist weak solutions $v\in C_t C^\gamma$ to the Euler equation which do not conserve the energy.
\end{enumerate}
Part \ref{Onsager:a} was fully proved by Constantin, E, and Titi \cite{CET94}, after a partial result of Eyink \cite{Eyink94}. Part \ref{Onsager:b} was solved more recently by Isett \cite{Isett18}, and by Buckmaster, De Lellis, Sz\'ekelyhidi, and Vicol \cite{BDSV19}. 
The last achievement took a decade of refinements of the convex integration method, and the study of its connection with turbulent flows is still an active research area (see e.g.~the recent work of Novack and Vicol \cite{NovackVicol23}). 
Coming back to the initial value problem, Theorem \ref{thm:nonuniqueness:Holder} is presumably the first example of wild data with H\"older regularity above the Onsager critical exponent $\nicefrac{1}{3}\leq\gamma<1$ (see \cite{BKP23} for a convex integration construction in $C^{\nicefrac{1}{2}-}$ for the forced 3D Euler equation). Below the Onsager critical exponent $0<\gamma<\nicefrac{1}{3}$, Daneri and Sz\'ekelyhidi \cite{DaneriSzekelyhidi21}
proved that the set of wild data $v^\circ\in C^{\gamma}$ is a dense subset of the
divergence-free vector fields in $L^2$ (see also \cite{DeRosaTione22}). However, to the best of our knowledge, it was not known neither concrete examples of wild data in $C^\gamma$, nor even the size of their set of singular points (see \cite{DeRosaHaffter22} for estimates of the singular set of times). 
In this regard, Theorem \ref{thm:nonuniqueness:Holder} provides an explicit example with a minimal singular set: a single point $\{0\}$. We remark that, although the aforementioned literature on the Onsager conjecture \ref{Onsager:b} is posed on the 3D periodic domain $\T^3$, our solutions can be trivially adjust to this setting. However, in this case the singular set becomes a line $\{0\}\times\T$. On this matter, it would be interesting to explore if there might be other 3D wild data with smaller singular sets.

\subsection{Energy conservation/dissipation}\label{sec:energy}

In this section we discuss the energy conservation/dissipation of weak solutions to the Euler equation in terms of the vorticity. For simplicity of presentation we consider the periodic domain. The same results hold in the euclidean space \textit{mutatis mutandis}. 

We start by recalling that Onsager's conjecture \ref{Onsager:a} is a corollary of the following Besov type criterion (see e.g.~\cite{DuchonRobert00,CCFS08}): Any weak solution $v$ to the Euler equation in $\T^d$ satisfying
\begin{equation}\label{eq:OnsagerBesov}
\lim_{|y|\to 0}\int_0^T\int_{\T^d}\frac{|v(t,x+y)-v(t,x)|^3}{|y|}\dif x\dif t
= 0,
\end{equation}
conserves the energy. Although this condition is independent of the dimension $d>1$, it has stronger implications in the 2D case. As a first easy consequence, by the Sobolev embedding $W^{1,p}\subset C^{1-\nicefrac{2}{p}}$ and \ref{Onsager:a}, it follows that the energy is conserved for any weak solution with $L^p$ control on vorticity for $p>3$. However, the energy conservation can be extended for smaller $p$'s by taking more advantage of \eqref{eq:OnsagerBesov}. 
Let us recall the argument from \cite[Proposition 6]{DuchonRobert00}. Firstly, by applying the H\"older inequality to $\delta_y v=v(x+y)-v(x)$,
$$\|\delta_y v\|_{L^3}
\leq\|\delta_y v\|_{L^p}^\theta\|\delta_y v\|_{L^q}^{1-\theta}
\quad\textrm{with}\quad
\frac{1}{3}=\frac{\theta}{p}+\frac{1-\theta}{q}.$$
Secondly, by using the Sobolev embedding $W^{1,p}\subset L^q$ for $q=\frac{2p}{2-p}$ and 
$\|\delta_y v\|_{L^p}\leq|y|\|v\|_{W^{1,p}}$, 
$$\|\delta_y v\|_{L^3}\leq C|y|^\theta\|v\|_{W^{1,p}}
\quad\textrm{with}\quad
\theta=\frac{5}{3}-\frac{2}{p}.$$
Therefore, the energy conservation criterion \eqref{eq:OnsagerBesov} is satisfied for $\theta>\nicefrac{1}{3}$, or equivalently $p>\nicefrac{3}{2}$.
This result was extended to the borderline case $p=\nicefrac{3}{2}$ by
Cheskidov, Lopes Filho, Nussenzveig Lopes, and Shvydkoy \cite{CLNS16}. Furthermore, they constructed a velocity field with vorticity in $L^{\nicefrac{3}{2}-}$ exhibiting non-vanishing energy flux. This suggests that there might exist dissipative solutions of the 2D Euler equation with vorticity in $L^{\nicefrac{3}{2}-}$. This threshold is somehow related to our construction: the energy dissipation rate of our subsolution vanishes at $t=0$ if and only if  $\alpha <\nicefrac{4}{3}$, where recall $\omega^\circ=\omega_s\in L^{\nicefrac{2}{\alpha}-}$. 
Let us give an heuristic explanation of this phenomenon.

First of all, it is convenient to rewrite the Euler equation \eqref{eq:IE} as 
\begin{subequations}\label{eq:IEq}
\begin{align}
\partial_t v+\mathrm{div}(v\ocircle v)+\nabla q &= 0, \\
\mathrm{div}v&=0,\\
v|_{t=0}&=v^\circ,
\end{align}
\end{subequations}
where
$$v\ocircle v:=v\otimes v-\frac{1}{2}|v|^2I_2,$$
is the projection of $v\otimes v$ into the space of
symmetric and traceless matrices, and the trace has been absorbed by the \textit{Bernoulli pressure}
$$q:=p+\frac{1}{2}|v|^2.$$

Let us analyze the non-uniqueness/energy-dissipation scenario in the Euler equation. Firstly, we introduce some notation. Concerning non-uniqueness: Given $v^\circ\in L^2$ with $\mathrm{div}v^\circ=0$ and $\mathrm{rot}v^\circ\in L^1\cap L^p$, let us denote $S=S(v^\circ)$ by the space of admissible solutions to the Euler equation, and $S_p=S_p(v^\circ)$ by the subset of $S$ formed by velocities with $\omega\in L_t^\infty(L^1\cap L^p)$. Recall that $S_p$ is non-empty by \cite{DiPernaMajda87}. 
With this notation, we can rewrite Yudovich's well-posedness Theorem: if $p=\infty$, then $S=S_\infty=\{v\}$ where $v$ is the Yudovich solution. Similarly, we can reformulate Conjecture \ref{conjecture:Lp}: for any $2<p<\infty$ there exists $v^\circ$ satisfying $|S_p(v^\circ)|>1$, and Theorem \ref{thm:nonuniqueness:Lp}: $|S(v^\circ)|=\infty$ for the initial data \eqref{eq:vs}.
Concerning energy dissipation: Given 
$v\in S$ we denote its energy by
$$E:=\int_{\T^2} e\dif x,
\quad\quad
e:=\frac{1}{2}|v|^2,$$
and its dissipation by
$$D:=E(0)-E.$$

Next, we give a condition for non-uniqueness/energy-dissipation in terms of averaged solutions.
Given $\mu$ a probability measure on $S$, we define 
$$\bar{v}:=\int_{S}v\dif\mu,
\quad\quad
\bar{\sigma}
:=\int_{S}(v\ocircle v)\dif\mu,
\quad\quad
\bar{q}
:=\int_{S}q\dif\mu.$$
The triple $(\bar{v},\bar{\sigma},\bar{q})$ satisfies the relaxed Euler equation
\begin{subequations}\label{eq:IER}
\begin{align}
\partial_t\bar{v}+\mathrm{div}\bar{\sigma}+\nabla\bar{q} &= 0, \\
\mathrm{div}\bar{v}&=0,\\
\bar{v}|_{t=0}&=v^\circ.
\end{align}
\end{subequations}
The (relaxed) energy was obtained by De Lellis and Sz\'ekelyhidi in \cite[Lemma 3]{DeLellisSzekelyhidi10}
\begin{equation}\label{eq:sub:e}
\bar{E}=\int_{\T^2}\bar{e}\dif x,
\quad\quad
\bar{e}
:=\frac{1}{2}|\bar{v}|^2 +\lambda_{\max}(\bar{v}\ocircle\bar{v}-\bar{\sigma}),
\end{equation}
where $\lambda_{\max}$ denotes the largest eigenvalue.
The crucial facts are that $\bar{e}$ is convex and agrees with $e$ on $S$.  
By applying the Jensen inequality and the Fubini-Tonelli Theorem, we get
\begin{subequations}\label{eq:barE}
\begin{align}
\bar{E}
&=\int_{\T^2}\bar{e}(\bar{v},\bar{\sigma})\dif x
=\int_{\T^2}\bar{e}\left(\int_{S}(v,v\ocircle v)\dif\mu\right)\mathrm{d}x\\
&\leq
\int_{\T^2}\int_{S}\bar{e}(v,v\ocircle v)\dif\mu\dif x
=\int_S\int_{\T^2}e(v)\dif x\dif\mu\label{eq:barE:Jensen}\\
&=E(0)-\int_S D(v)\dif\mu.\label{eq:barE:D}\\ \nonumber
\end{align}
\end{subequations}
In particular, $\bar{E}\leq E(0)$. The last inequality is strict in two situations: 
\begin{enumerate}[(1)]
	\item\label{Jensen} If the Jensen inequality \eqref{eq:barE:Jensen} is strict. In this case, $\bar{e}$ is strictly convex on (the convex envelope) of $\mathrm{spt}(\mu)$. In particular, $|\mathrm{spt}(\mu)|>1$.
	\item\label{D} If the dissipative term in \eqref{eq:barE:D} is non-vanishing. In this case, $D(v)>0$ $\mu$-a.e.~$v\in S$. 
\end{enumerate}
In short, \ref{Jensen} concerns non-uniqueness and \ref{D} concerns energy dissipation. 
On the one hand, \ref{Jensen} is to be expected in the regime $1<p<\infty$. On the other hand, \ref{D} cannot occur if $\mathrm{spt}(\mu)\subset S_p$ in the regime $\nicefrac{3}{2}\leq p<\infty$, while it might be possible if $1<p<\nicefrac{3}{2}$. 

We have seen that the existence of a solution $(\bar{v},\bar{\sigma},\bar{q})$ to \eqref{eq:IER} with strictly decreasing energy $\bar{E}$ is linked to either non-uniqueness or energy dissipation. Our third main result Theorem \ref{thm:energy} shows the existence of this solution $(\bar{v},\bar{\sigma},\bar{q})$, which is called a subsolution in the convex integration framework, for the initial data \eqref{eq:vs}. 
Remarkably, the initial energy dissipation of the subsolution is imperceptible ($\partial_t\bar{E}|_{t=0}=0$) in the regime $\nicefrac{3}{2}\leq p<\infty$. Below $p=\nicefrac{3}{2}$, the energy dissipation rate is more abrupt, which might reflect the contribution of the dissipation \ref{D} to the Jensen gap \ref{Jensen}.

By virtue of the h-principle for the Euler equation discovered by De Lellis and Sz\'ekelyhidi \cite{DeLellisSzekelyhidi10} Theorem \ref{thm:energy} implies the existence of infinitely many admissible solutions, and thus Theorems \ref{thm:nonuniqueness:Holder} \& \ref{thm:nonuniqueness:Lp} follow as corollaries. Furthermore, the flexibility of the h-principle allows to prescribe any energy functional $e$ strictly greater than $\bar{e}$ on the region $\Omega=\{|x|\leq (ct)^{\nicefrac{1}{\alpha}}\}$. Thus, it is possible to select $e$ making the energy $E=\int e\dif x$ either constant (conservative) or decreasing (dissipative).
Here we do not consider increasing energies (non-admissible). 

\subsection{Other admissibility criteria}
In spite of these non-uniqueness results, there exist other admissibility criteria that allow to rule out some of these solutions. The first criterion is to belong to the natural space $S_p$.
In the regime $\nicefrac{3}{2}\leq p<\infty$, any weak solution $v\in S$ with decreasing energy cannot belong to $S_p$ because $S_p\subset S_{\text{con}}:=S\cap\{E=E(0)\}$. 
In contrast, by taking $E$ decreasing in the regime $1<p<\nicefrac{3}{2}$, Theorem \ref{thm:nonuniqueness:Lp} shows presumably the first example of dissipative solutions $v\in S$ with $L^p$ vorticity data, although this does not necessarily imply that $S_p\nsubseteq S_{\text{con}}$.
The second criterion is the vanishing viscosity limit. It was shown in \cite[Theorem 2]{CLNS16} that any physically realizable
solution conserves the energy for every $1<p\leq\infty$. This scaling gap in the energy conservation between ideal solutions and ideal limits has been
observed in other equations of Hydrodynamics (see \cite{CIN18} for SQG and \cite{FaracoLindberg20} for MHD). 
By taking $E$ constant in the regime $1<p<\infty$, Theorem \ref{thm:nonuniqueness:Lp} shows that $|S_{\text{con}}|=\infty$ for the initial data \eqref{eq:vs}, although this does not necessarily implies that $|S_p|>1$. For radially symmetric solutions, a third way to rule out non-uniqueness could be the stability of 2D viscous vortices (see e.g.~\cite{GallayWayne05}). In such a case, the proof of non-uniqueness of $L^p$ physical solutions would require less symmetric initial data. Another prerequisite satisfied by physicial solutions is the local energy (in)equality (see e.g.~\cite{DuchonRobert00,DeLellisSzekelyhidi10}). Globally dissipative solutions have been constructed in the last years via convex integration (see e.g.~\cite{Isett22,DeLellisHyunju22,GebhardKolumban22b}). We do not explore this property here.

We remark in passing that 2D anomalous dissipation of energy is to be expected in the borderline case $p\to 1$ (see e.g.~\cite{TBM20}). Remarkably, Sz\'ekelyhidi \cite{Szekelyhidi11} provided indeed the first concrete example of a wild datum in this class: the unstable vortex sheet $v^\circ(x)=\mathrm{sgn}(x_2)$. 
This initiated a promising program on modeling hydrodynamical instabilities via convex integration: see \cite{Szekelyhidi11,MengualSzekelyhidi23} for the Kelvin-Helmholtz, \cite{GKS21,GebhardKolumban22a,GHK22} for the Rayleigh-Taylor, and \cite{CFG11,Szekelyhidi12,CCF21,ForsterSzekelyhidi18,CFM19,HitruhinLindberg21,Mengual22,NoisetteSzekelyhidi21,CFM22} for the Saffman-Taylor instabilities.

\subsection{Organization of the paper}

We start Section \ref{sec:Hprinciple} by writing the relaxed Euler equation \eqref{eq:IER} for radially symmetric self-similar subsolutions. In Section \ref{sec:subsolution} we derive the conditions under which an admissible subsolution exists, and we construct an example. Then, we prove Theorems \ref{thm:nonuniqueness:Holder}, \ref{thm:nonuniqueness:Lp} \& \ref{thm:energy}. Finally, we analyze the borderline case $\alpha\to 2$ in Section \ref{sec:alpha2}.

\section{H-principle for symmetric subsolutions}\label{sec:Hprinciple}

In this section we write the h-principle for the Euler equation of De Lellis and Sz\'ekelyhidi \cite{DeLellisSzekelyhidi10} for 2D radially symmetric self-similar subsolutions. We start by recalling the definitions of weak solution and subsolution to the Euler equation. 

\begin{definition}
A triple $(\bar{v},\bar{\sigma},\bar{q})\in C_t(L^2\times L^1\times L^1)$ where
\begin{equation}\label{eq:sub:1}
\bar{v}=\left[\begin{array}{c}
\bar{v}_1 \\ \bar{v}_2
\end{array}\right],
\quad\quad
\bar{\sigma}
=\left[\begin{array}{rr}
\bar{\sigma}_1 & \bar{\sigma}_2 \\
\bar{\sigma}_2 & -\bar{\sigma}_1
\end{array}\right],\quad\quad
\bar{q},
\end{equation}
is a \textit{subsolution} to the Euler equation \eqref{eq:IEq} if $\bar{v}$ is weakly divergence-free and
\begin{equation}\label{eq:weak}
\int_0^T\int_{\R^2}(\bar{v}\cdot\partial_t\Phi+\bar{\sigma}:\nabla\Phi + \bar{q}\mathrm{div}\Phi)\dif x\dif t=-\int_{\R^2}v^\circ\cdot\Phi|_{t=0}\dif x,
\end{equation}
holds for every test function $\Phi\in C_c^1([0,T)\times\R^2)$. The pair $(\bar{v},\bar{q})$ is a \textit{weak solution} to the Euler equation if $(\bar{v},\bar{v}\ocircle\bar{v},\bar{q})$ is a subsolution.
\end{definition}

Next, we recall the h-principle for the Euler equation  \cite[Proposition 2]{DeLellisSzekelyhidi10}. Recall the definition of the energy functional $\bar{e}$ \eqref{eq:sub:e}.

\begin{thm}[H-principle for the Euler equation]\label{thm:HP} Let $\Omega$ be a non-empty open subset of $(0,T]\times\R^2$ and let $e\in C(\Omega)$ with $e\mathbf{1}_{\Omega}\in C_tL^1$. Suppose there exists a subsolution $(\bar{v},\bar{\sigma},\bar{q})$ to the Euler equation satisfying the following properties:
\begin{itemize}
	\item $\{\bar{\sigma}\neq\bar{v}\ocircle\bar{v}\}
	\subseteq\Omega$.
	\item $(\bar{v},\bar{\sigma})$ maps continuously $\Omega$ into $\{\bar{e}<e\}$.
\end{itemize}
Then, there exist infinitely many weak solutions $(v,q)$ to the Euler equation with Bernoulli's pressure $q=\bar{q}$ and velocity $v$ satisfying
\begin{align*}
v=\bar{v}&\quad\textrm{outside }\Omega,\\
\frac{1}{2}|v|^2=e
&\quad\textrm{inside }\Omega.
\end{align*}
\end{thm}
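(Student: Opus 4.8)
The statement is \cite[Proposition 2]{DeLellisSzekelyhidi10}, and the plan is to run the Baire-category version of convex integration for the Euler equations, as developed there; since this machinery is used essentially as a black box in the reduction of Theorems \ref{thm:nonuniqueness:Holder}, \ref{thm:nonuniqueness:Lp} and \ref{thm:energy} to Theorem \ref{thm:energy}, it is worth laying out the structure of the argument. First I would fix the functional framework: let $X_0$ be the set of subsolutions $(\bar v,\bar\sigma,\bar q)$ that are smooth on $\Omega$, agree with the prescribed data outside $\Omega$ (so that $\{\bar\sigma\neq\bar v\ocircle\bar v\}\subseteq\Omega$), and satisfy $(\bar v,\bar\sigma)(t,x)\in\{\bar e<e(t,x)\}$ with a gap that is uniform on compact subsets of $\Omega$; this set is non-empty by hypothesis. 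One then takes $X$ to be the closure of $X_0$ inside the space of velocities $\bar v\in C([0,T];L^2)$, metrized so as to induce the weak $L^2$ topology on each time slice; on the bounded set $\{\|\bar v(t)\|_{L^2}^2\le 2\|e(t)\mathbf 1_\Omega\|_{L^1}+C\}$ this is a complete metric space, and the linear constraints — $\mathrm{div}\,\bar v=0$ together with the momentum balance \eqref{eq:weak} with $\bar q$ fixed — are preserved under weak limits.

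Next I would introduce the energy defect $I(\bar v):=\int_0^T\!\!\int_\Omega\big(e-\tfrac12|\bar v|^2\big)\dif x\dif t\ge 0$ and observe that $I$ is a pointwise limit of weakly sequentially continuous functionals, hence of Baire class one on $X$; therefore its set of continuity points $\mathcal C\subseteq X$ is residual, in particular non-empty and uncountable. The heart of the proof is the \emph{perturbation property}: if $\bar v\in X$ is not a weak solution, i.e.\ $I(\bar v)>0$, then $\bar v$ is not a continuity point of $I$. To establish this I would use the geometric input of \cite[Lemma 3]{DeLellisSzekelyhidi10} — the same computation behind \eqref{eq:sub:e} — namely that the $\Lambda$-convex hull of the pointwise constraint set $K_e=\{(v,\sigma):\sigma=v\ocircle v,\ \tfrac12|v|^2=e\}$, relative to the wave cone of the linear system $\partial_t v+\mathrm{div}\,\sigma+\nabla q=0$, $\mathrm{div}\,v=0$, is exactly $\{(v,\sigma):\bar e(v,\sigma)\le e\}$. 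Given $\bar v$ with $I(\bar v)>0$, on a set of positive measure one has $\bar e<e$ with a quantitative gap, so I can add a large-amplitude localized plane wave along an admissible direction of the wave cone, cut off by a smooth bump supported in $\Omega$ to keep the perturbation compactly supported, divergence-free and compatible with \eqref{eq:weak}; superposing finitely many such waves produces $\bar v_k\in X_0$ with $\bar v_k\rightharpoonup\bar v$ yet $\limsup_k I(\bar v_k)\le I(\bar v)-\beta(I(\bar v))$ for some strictly positive, nondecreasing $\beta$, which is incompatible with continuity of $I$ at $\bar v$.

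Finally I would combine the two steps: every $\bar v\in\mathcal C$ must satisfy $I(\bar v)=0$, i.e.\ $\tfrac12|\bar v|^2=e$ a.e.\ in $\Omega$; since $\bar e(\bar v,\bar\sigma)=\tfrac12|\bar v|^2+\lambda_{\max}(\bar v\ocircle\bar v-\bar\sigma)\le e=\tfrac12|\bar v|^2$ we get $\lambda_{\max}(\bar v\ocircle\bar v-\bar\sigma)\le 0$, and as $\bar v\ocircle\bar v-\bar\sigma$ is symmetric and trace-free this forces $\bar\sigma=\bar v\ocircle\bar v$ a.e.\ in $\Omega$, hence everywhere. Thus $(\bar v,\bar q)$ is a weak solution of the Euler equation with Bernoulli pressure $q=\bar q$, equal to $\bar v$ outside $\Omega$ and with $\tfrac12|\bar v|^2=e$ inside, and since $\mathcal C$ is uncountable these solutions are infinitely many (limit points arising from perturbations with disjoint frequency supports are genuinely distinct). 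The main obstacle — and the only part that is not soft functional analysis — is the quantitative perturbation property of the middle paragraph: it requires the explicit $\Lambda$-convex hull computation together with a localization lemma producing admissible oscillations whose amplitude is bounded below in terms of the gap $e-\bar e$, so that $\int_\Omega\tfrac12|\bar v|^2$ is raised by a definite amount; the Baire-category bookkeeping, the weak compactness, and the stability of the linear constraints are all routine.
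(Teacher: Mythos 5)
The paper does not prove Theorem \ref{thm:HP} at all: it is imported verbatim as \cite[Proposition 2]{DeLellisSzekelyhidi10}, and your sketch is a faithful outline of precisely that Baire-category convex-integration argument (the space of strict subsolutions closed in $C_tL^2_w$, the Baire-1 energy defect whose continuity points are residual, the perturbation property via the $\Lambda$-convex hull computation of \cite[Lemma 3]{DeLellisSzekelyhidi10} with localized plane waves, and the final observation that $\lambda_{\max}(\bar v\ocircle\bar v-\bar\sigma)\le 0$ for a trace-free symmetric matrix forces $\bar\sigma=\bar v\ocircle\bar v$). So your proposal is correct and takes essentially the same approach as the paper's cited source.
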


By virtue of Theorem \ref{thm:HP}, the proof of non-uniqueness of admissible solutions to the Euler equation is reduced to find a subsolution $(\bar{v},\bar{\sigma},\bar{q})$ with non-empty $\Omega$ and strictly decreasing energy $\bar{E}=\int\bar{e}\dif x$. In this case, we will say that $(\bar{v},\bar{\sigma},\bar{q})$ is an \textit{admissible subsolution}.

\subsection{Complex coordinates}

In this section we rewrite the relaxed Euler equation \eqref{eq:IER} in complex coordinates
$$x=x_1+ix_2.$$
In this setting, for any $z,w\in\R^2\simeq\C$ we denote as usual
$$|z|=\sqrt{z_1^2+z_2^2},
\quad\quad
z^*=z_1-iz_2,
\quad\quad
z^\perp=iz=-z_2+iz_1,
$$
and also
\begin{align*}
z\cdot w&=(zw^*)_1=z_1w_1+z_2w_2,\\
z\cdot w^\perp&=(zw^*)_2=z_2w_1-z_1w_2.
\end{align*}
By slightly abuse of the notation, we identify
$$\nabla=\partial_1+i\partial_2,$$
and $(\bar{v},\bar{\sigma},\bar{q})$ in \eqref{eq:sub:1} with
$$
\bar{v}=\bar{v}_1+i\bar{v}_2,
\quad\quad
\bar{\sigma}=\bar{\sigma}_1 + i\bar{\sigma}_2,
\quad\quad
\bar{q}=\bar{q}+i0.
$$

\begin{prop}\label{prop:complex} The relaxed Euler equation \eqref{eq:IER} is written in complex coordinates as
\begin{subequations}\label{eq:IERcomplex}
	\begin{align}
	\partial_t\bar{v}+\nabla^*\bar{\sigma}+\nabla\bar{q} &= 0, \\
	\nabla\cdot\bar{v}&=0,\\
	\bar{v}|_{t=0}&=v^\circ,
	\end{align}
\end{subequations}
and the energy \eqref{eq:sub:e} equals
\begin{equation}\label{eq:sub:e:complex}
\bar{e}=\frac{1}{2}|\bar{v}|^2+\left|\frac{1}{2}\bar{v}^2-\bar{\sigma}\right|.
\end{equation}
Furthermore, the subsolution is a solution to the Euler equation if and only if
$$\bar{\sigma}=\frac{1}{2}\bar{v}^2.$$
\end{prop}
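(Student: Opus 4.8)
The plan is to translate each scalar/vector/matrix object in the real-coordinate system \eqref{eq:IER} into its complex counterpart using the dictionary established just above the proposition, and then verify the three identities \eqref{eq:IERcomplex}, \eqref{eq:sub:e:complex} and the final equivalence by direct computation. First I would address the divergence: writing $\bar v=\bar v_1+i\bar v_2$ and $\nabla=\partial_1+i\partial_2$, the real part of $\nabla^*\bar v=(\partial_1-i\partial_2)(\bar v_1+i\bar v_2)$ is $\partial_1\bar v_1+\partial_2\bar v_2=\mathrm{div}\,\bar v$, so the equation $\mathrm{div}\,\bar v=0$ becomes $\nabla\cdot\bar v=0$ in the notation where $z\cdot w=(zw^*)_1$; I should be careful that the paper writes this as $\nabla\cdot\bar v$, meaning the real part of $\nabla\bar v^*$ (equivalently of $\nabla^*\bar v$), and note the imaginary part gives $\partial_1\bar v_2-\partial_2\bar v_1=\mathrm{rot}\,\bar v=\bar\omega$, which is consistent but not imposed.

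Next I would handle the momentum equation. The term $\mathrm{div}\,\bar\sigma$ for a traceless symmetric matrix $\bar\sigma=\begin{bmatrix}\bar\sigma_1&\bar\sigma_2\\\bar\sigma_2&-\bar\sigma_1\end{bmatrix}$ has components $(\partial_1\bar\sigma_1+\partial_2\bar\sigma_2,\ \partial_1\bar\sigma_2-\partial_2\bar\sigma_1)$; identifying this vector with the complex number $(\partial_1\bar\sigma_1+\partial_2\bar\sigma_2)+i(\partial_1\bar\sigma_2-\partial_2\bar\sigma_1)$ and recalling $\bar\sigma=\bar\sigma_1+i\bar\sigma_2$, one checks this equals exactly $(\partial_1-i\partial_2)(\bar\sigma_1+i\bar\sigma_2)=\nabla^*\bar\sigma$. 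This is the key structural observation: the special traceless-symmetric form of $\bar\sigma$ in \eqref{eq:sub:1} is precisely what makes $\mathrm{div}\,\bar\sigma$ collapse to a single Wirtinger-type derivative $\nabla^*\bar\sigma$. The gradient term $\nabla\bar q$ with real $\bar q$ is just $(\partial_1\bar q,\partial_2\bar q)\simeq(\partial_1+i\partial_2)\bar q=\nabla\bar q$, and $\partial_t\bar v$ is immediate. Adding these gives the first line of \eqref{eq:IERcomplex}, and the initial condition is unchanged.

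Then I would compute the energy. The quantity $\bar v\ocircle\bar v-\bar\sigma$ is a real traceless symmetric $2\times2$ matrix; writing $\bar v\ocircle\bar v=\bar v\otimes\bar v-\tfrac12|\bar v|^2 I_2$ and using $(\bar v\otimes\bar v)_{jk}=\bar v_j\bar v_k$, one finds that $\bar v\ocircle\bar v$ corresponds under the dictionary to the complex number $\tfrac12\bar v^2$ placed in the $(\bar\sigma_1,\bar\sigma_2)$ slots (indeed $\tfrac12(\bar v_1+i\bar v_2)^2=\tfrac12(\bar v_1^2-\bar v_2^2)+i\,\bar v_1\bar v_2$, matching the entries $\tfrac12(\bar v_1^2-\bar v_2^2)$ and $\bar v_1\bar v_2$ of $\bar v\ocircle\bar v$). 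Hence $\bar v\ocircle\bar v-\bar\sigma\leftrightarrow\tfrac12\bar v^2-\bar\sigma$ as a complex number, and the two eigenvalues of a real traceless symmetric matrix $\begin{bmatrix}a&b\\b&-a\end{bmatrix}$ are $\pm\sqrt{a^2+b^2}=\pm|a+ib|$; therefore $\lambda_{\max}(\bar v\ocircle\bar v-\bar\sigma)=|\tfrac12\bar v^2-\bar\sigma|$, which gives \eqref{eq:sub:e:complex}. Finally, the solution criterion: $(\bar v,\bar\sigma,\bar q)$ is a solution iff $\bar\sigma=\bar v\ocircle\bar v$ (by the definition of weak solution), and since the map from traceless symmetric matrices to $\C$ is a bijection, this is equivalent to $\bar\sigma=\tfrac12\bar v^2$ in complex notation. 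I expect no serious obstacle here; the only delicate point is bookkeeping the sign and index conventions so that the correspondence $\mathrm{div}\,\bar\sigma\leftrightarrow\nabla^*\bar\sigma$ comes out with the conjugated derivative rather than $\nabla\bar\sigma$, and confirming that "$\nabla\cdot\bar v$" as used later in the paper really denotes the real part of $\nabla\bar v^*$ so that the divergence-free condition is stated correctly.
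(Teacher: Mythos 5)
Your handling of the three computational identities is correct and coincides with the paper's proof: the collapse $\mathrm{div}\,\bar\sigma=(\partial_1-i\partial_2)(\bar\sigma_1+i\bar\sigma_2)=\nabla^*\bar\sigma$ for the traceless symmetric stress, the identification $\bar v\ocircle\bar v\simeq\tfrac12\bar v^2$, and the fact that $\lambda_{\max}$ of a traceless symmetric matrix equals the modulus of the associated complex number are exactly the steps in the paper.

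The gap is in the final equivalence. You dismiss it by saying that being a solution means $\bar\sigma=\bar v\ocircle\bar v$ ``by the definition of weak solution'' and then invoking the bijection between traceless symmetric matrices and $\C$. But in the paper a weak solution is a \emph{pair} $(\bar v,\bar q)$ such that $(\bar v,\bar v\ocircle\bar v,\bar q)$ is a subsolution; the claim to prove is that if the pair $(\bar v,\bar q)$ coming from the given subsolution $(\bar v,\bar\sigma,\bar q)$ solves the Euler equation, then the stress $\bar\sigma$ of that subsolution is forced to equal $\tfrac12\bar v^2$. The ``if'' direction is immediate, but the ``only if'' direction is not definitional: comparing the two weak formulations only yields $\nabla^*\bigl(\tfrac12\bar v^2-\bar\sigma\bigr)=0$, i.e.\ that $\tfrac12\bar v^2-\bar\sigma$ is anti-holomorphic in $x$, not that it vanishes. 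The paper closes this by a Liouville-type rigidity argument: $\tfrac12\bar v^2-\bar\sigma$ is integrable (since $\bar v\in C_tL^2$ and $\bar\sigma\in C_tL^1$), and an integrable anti-holomorphic function on $\R^2$ must vanish identically. This combination of anti-holomorphicity and integrability is the actual content of the ``furthermore'' clause (and of its analogues in the subsequent polar and self-similar propositions), and it is absent from your argument.
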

\begin{proof}
On the one hand,
\begin{align*}
\mathrm{div}
\left[\begin{array}{rr}
\bar{\sigma}_1 & \bar{\sigma}_2 \\
\bar{\sigma}_2 & -\bar{\sigma}_1
\end{array}\right]
&=\left[\begin{array}{c}
\partial_1\bar{\sigma}_1 + \partial_2\bar{\sigma}_2 \\
\partial_1\bar{\sigma}_2 - \partial_2\bar{\sigma}_1
\end{array}\right]
=
(\partial_1-i\partial_2)(\bar{\sigma}_1+i\bar{\sigma}_2).
\end{align*}
On the other hand,
$$\bar{v}\ocircle\bar{v}
=\frac{1}{2}\left[\begin{array}{cc}
\bar{v}_1^2-\bar{v}_2^2 & 2\bar{v}_1\bar{v}_2 \\[0.1cm]
2\bar{v}_1\bar{v}_2 & \bar{v}_2^2-\bar{v}_1^2
\end{array}\right]
=
\frac{1}{2}(\bar{v}_1+i\bar{v}_2)^2.$$
For \eqref{eq:sub:e:complex} it is easy to check that any traceless symmetric matrix $z$ satisfies
$$\lambda_{\max}
\left[\begin{array}{rr}
z_1 & z_2 \\
z_2 & -z_1
\end{array}\right]=\sqrt{z_1^2+z_2^2}=|z|,$$
where we identify $z=z_1+iz_2$. Finally, if $(\bar{v},\bar{q})$ is a solution to the Euler equation, it holds
$$\nabla^*\left(\frac{1}{2}\bar{v}^2-\bar{\sigma}\right)=0.$$
Then, since $\frac{1}{2}\bar{v}^2-\bar{\sigma}$ is anti-holomorphic and integrable, necessarily $\frac{1}{2}\bar{v}^2-\bar{\sigma}=0$.
\end{proof}

\subsection{Radial symmetry}
In this section we write the relaxed Euler equation \eqref{eq:IERcomplex} in polar coordinates
$$x=re^{i\theta},$$
for radially symmetric subsolutions. More precisely, we assume that the fluid is rotating around the origin: the velocity $\bar{v}$ is of the form
\begin{equation}\label{ansatz:v}
\bar{v}(t,x):=h(t,r)ie^{i\theta},
\end{equation}
for some real-valued $h$, to be determined. 
Under this choice, $\bar{v}$ is automatically divergence-free (see \eqref{eq:D*v}) and the vorticity $\bar{\omega}=\mathrm{rot}\bar{v}$ is radially symmetric $\bar{\omega}(t,x)=g(t,r)$, where $h$ and $g$ are related by
$$rg=\partial_r(rh).$$
In order to compare $\bar{\sigma}$ with $\frac{1}{2}\bar{v}^2=-\frac{1}{2}h^2e^{2i\theta}$, it seems convenient to take $\bar{\sigma}$ of the form
\begin{equation}\label{ansatz:s}
\bar{\sigma}(t,x):=-w(t,r)e^{2i\theta},
\end{equation}
for some complex-valued $w$, to be determined.
Finally, we also assume (although it can be deduced from the equation) that the Bernoulli pressure $\bar{q}$ is radially symmetric, and then (by slightly abuse of the notation) we write
\begin{equation}\label{ansatz:q}
\bar{q}(t,x):=q(t,r),
\end{equation}
for some (real-valued) $q$, to be determined.

\begin{prop}\label{prop:polar} Under the choice \eqref{ansatz:v}-\eqref{ansatz:q}, the relaxed Euler equation \eqref{eq:IERcomplex} is written as
\begin{subequations}\label{eq:IERpolar}
\begin{align}
i\partial_t h-\frac{\partial_r(r^2w)}{r^2}+\partial_r q &= 0,\label{eq:IERpolar:1} \\ 
h|_{t=0}&=h^\circ,
\end{align}
\end{subequations}
where $v^\circ(x)=h^\circ(r)ie^{i\theta}$, 
and the energy \eqref{eq:sub:e:complex} equals
\begin{equation}\label{eq:sub:e:polar}
\bar{e}=\frac{1}{2}h^2+\left|\frac{1}{2}h^2-w\right|.
\end{equation}
Furthermore, the subsolution is a solution to the Euler equation if and only if
$$w=\frac{1}{2}h^2.$$
In this case, $h$ is steady and $q$ satisfies
$$\partial_rq=gh.$$
\end{prop}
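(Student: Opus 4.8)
The plan is to reduce the proposition to a direct computation in the complex--polar calculus. First I would record the two elementary operator identities
\[
\nabla=e^{i\theta}\Bigl(\partial_r+\tfrac{i}{r}\partial_\theta\Bigr),
\qquad
\nabla^{*}=e^{-i\theta}\Bigl(\partial_r-\tfrac{i}{r}\partial_\theta\Bigr),
\]
which follow from $\nabla=2\partial_{\bar x}$, $\nabla^{*}=2\partial_{x}$ and the chain rule applied to $x=re^{i\theta}$. With these in hand the whole statement becomes bookkeeping of the phases $e^{i\theta}$ and $e^{2i\theta}$.

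Step one: substitute the ansatz \eqref{ansatz:v}--\eqref{ansatz:q} into \eqref{eq:IERcomplex}. For $\bar v=h\,ie^{i\theta}$ one computes $\partial_t\bar v=i(\partial_t h)e^{i\theta}$ and $\nabla^{*}\bar v=i\,\partial_r(rh)/r$; the latter is purely imaginary for real $h$, so $\nabla\cdot\bar v=\mathrm{Re}(\nabla^{*}\bar v)=0$ (this is \eqref{eq:D*v}), while $\mathrm{Im}(\nabla^{*}\bar v)$ gives $\bar\omega=g$ with $rg=\partial_r(rh)$. For the stress and pressure terms, $\bar\sigma=-we^{2i\theta}$ yields $\nabla^{*}\bar\sigma=-e^{i\theta}\,\partial_r(r^{2}w)/r^{2}$, and $\bar q=q(r)$ yields $\nabla\bar q=e^{i\theta}\,\partial_r q$ since $q$ is $\theta$-independent. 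Adding the three contributions to the momentum equation and cancelling the common factor $e^{i\theta}$ gives exactly \eqref{eq:IERpolar:1}.

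Step two: the energy. Plug the ansatz into \eqref{eq:sub:e:complex}. One has $|\bar v|^{2}=h^{2}$, $\bar v^{2}=-h^{2}e^{2i\theta}$ and $\bar\sigma=-we^{2i\theta}$, hence $\tfrac12\bar v^{2}-\bar\sigma=(w-\tfrac12 h^{2})e^{2i\theta}$, whose modulus is $|\tfrac12 h^{2}-w|$; this is \eqref{eq:sub:e:polar}.

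Step three: the solution characterization. By Proposition~\ref{prop:complex}, $(\bar v,\bar q)$ solves the Euler equation iff $\bar\sigma=\tfrac12\bar v^{2}$, i.e. $-we^{2i\theta}=-\tfrac12 h^{2}e^{2i\theta}$, i.e. $w=\tfrac12 h^{2}$. Under this relation $\partial_r(r^{2}w)/r^{2}$ and $\partial_r q$ are both real, so taking imaginary and real parts of \eqref{eq:IERpolar:1} separately forces $\partial_t h=0$ (steadiness) and $\partial_r q=\partial_r(r^{2}w)/r^{2}$; expanding the right-hand side with $w=\tfrac12 h^{2}$ and $rg=\partial_r(rh)$ identifies it with $gh$. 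The only real care needed is the consistent use of the polar form of $\nabla$ and $\nabla^{*}$ and tracking the $\theta$-phases so that each equation collapses to an identity among functions of $(t,r)$ alone; there is no genuine analytic obstacle.
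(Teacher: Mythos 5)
Your proposal is correct and follows essentially the same route as the paper: express $\nabla,\nabla^{*}$ in polar coordinates, substitute the ansatz \eqref{ansatz:v}--\eqref{ansatz:q}, cancel the common phase $e^{i\theta}$ to get \eqref{eq:IERpolar:1}, read off divergence and vorticity from the real and imaginary parts of $\nabla^{*}\bar v$, and use Proposition~\ref{prop:complex} plus the real/imaginary decomposition of \eqref{eq:IERpolar:1} for the solution characterization. The computations (including $\nabla^{*}\bar\sigma=-e^{i\theta}\partial_r(r^{2}w)/r^{2}$ and the identity $\partial_r(\tfrac12 r^{2}h^{2})/r^{2}=gh$) match the paper's.
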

\begin{proof}
By writing the gradient in polar coordinates
$$\nabla_x=e^{i\theta}\left(\partial_r+\frac{i}{r}\partial_\theta\right),$$
we deduce that
\begin{align*}
\nabla_x^*\bar{v}
&=e^{-i\theta}
\left(\partial_r-\frac{i}{r}\partial_\theta\right)(hie^{i\theta})
=i\frac{\partial_r(rh)}{r},\\
\nabla_x^*\bar{\sigma}
&=e^{-i\theta}
\left(\partial_r-\frac{i}{r}\partial_\theta\right)(-we^{2i\theta})
=-e^{i\theta}\frac{\partial_r(r^2w)}{r^2},
\end{align*}
and also
$$\nabla_x\bar{q}
=e^{i\theta}\partial_r q.$$
On the one hand (recall $h$ is real-valued)
\begin{subequations}\label{eq:D*v}
\begin{align}
\nabla_x\cdot\bar{v}
&=(\nabla_x^*\bar{v})_1=0,\\
\bar{\omega}=\nabla_x^\perp\cdot\bar{v}
&=(\nabla_x^*\bar{v})_2=\frac{\partial_r(rh)}{r}
=g.
\end{align}
\end{subequations}
On the other hand,
$$\partial_t\bar{v}
+\nabla_x^*\bar{\sigma} + \nabla_x\bar{q}
=e^{i\theta}\left(i\partial_t h
-\frac{\partial_r(r^2w)}{r^2}+\partial_r q\right).$$
We have proved \eqref{eq:IERpolar}. 
The equality \eqref{eq:sub:e:polar} follows from the definitions \eqref{ansatz:v}\eqref{ansatz:s}.
Finally, by decomposing \eqref{eq:IERpolar:1} into its real and imaginary part respectively, we deduce that $w=w_1+iw_2$ and $(q,h)$ are related by
\begin{subequations}\label{eq:IERpolarparts}
\begin{align}
r^2\partial_r q&=\partial_r(r^2w_1),\\
r^2\partial_t h&=\partial_r(r^2w_2).
\end{align}
\end{subequations}
Hence, if $w=\frac{1}{2}h^2$ we have $\partial_th=0$, and $\partial_r q=gh$ follows from
\begin{equation}\label{eq:qgh}
\frac{1}{2}\partial_r(rh)^2
=r^2gh.
\end{equation}
This concludes the proof.
\end{proof}

\begin{cor}\label{cor:radial} The energy \eqref{eq:sub:e:polar} is minimized in $w_1$ by taking
\begin{equation}\label{ansatz:w1}
w_1:=\frac{1}{2}h^2.
\end{equation}
Under the choice \eqref{ansatz:w1}, the relaxed Euler equation \eqref{eq:IERpolar} is written as
\begin{subequations}\label{eq:IERpolar:h}
\begin{align}
\partial_r q&=gh,\\
\partial_r(r^2w_2)&=r^2\partial_ th,
\\ 
h|_{t=0}&=h^\circ,
\end{align}
\end{subequations}
and the energy \eqref{eq:sub:e:polar} equals
\begin{equation}\label{eq:sub:e:radial}
\bar{e}=\frac{1}{2}h^2+|w_2|.
\end{equation}
Furthermore, the subsolution is a solution to the Euler equation if and only if
$$w_2=0.$$
In this case, $h$ is steady.
\end{cor}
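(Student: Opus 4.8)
The plan is to use the structure already exposed in Proposition \ref{prop:polar}: in the real/imaginary split \eqref{eq:IERpolarparts} of the relaxed equation, the Bernoulli pressure $q$ is a free unknown that enters only through the real part $w_1$ of $w$. Hence $w_1$ can be prescribed at will — in particular so as to minimize the energy \eqref{eq:sub:e:polar} — without affecting the evolution equation for $h$ or the characterization of genuine solutions. Concretely, since $h$ is real-valued we have $\tfrac12 h^2-w=(\tfrac12 h^2-w_1)-iw_2$, so \eqref{eq:sub:e:polar} reads $\bar e=\tfrac12 h^2+\big((\tfrac12 h^2-w_1)^2+w_2^2\big)^{1/2}$. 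For $h$ and $w_2$ held fixed, the right-hand side is minimized over the real variable $w_1$ precisely at $w_1=\tfrac12 h^2$, where it takes the value $\tfrac12 h^2+|w_2|$. This is exactly the choice \eqref{ansatz:w1} and the formula \eqref{eq:sub:e:radial}.

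Next I would check consistency with the relaxed equation. The unknown $w_1$ occurs in \eqref{eq:IERpolarparts} only in the first identity $r^2\partial_r q=\partial_r(r^2 w_1)$, which merely determines $q$ (up to an additive function of $t$) once $w_1$ is chosen; the second identity $r^2\partial_t h=\partial_r(r^2 w_2)$ and the initial condition $h|_{t=0}=h^\circ$ involve only $h$ and $w_2$ and are untouched. Substituting $w_1=\tfrac12 h^2$ into the first identity, i.e. using $\partial_r(r^2 w_1)=\tfrac12\partial_r(rh)^2$, and invoking the algebraic relation \eqref{eq:qgh}, namely $\tfrac12\partial_r(rh)^2=r^2 gh$, turns it into $\partial_r q=gh$. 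Assembling the two identities together with the initial condition yields the system \eqref{eq:IERpolar:h}.

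Finally, for the characterization of solutions I would invoke Proposition \ref{prop:polar}: the subsolution is a weak solution of the Euler equation if and only if $w=\tfrac12 h^2$, that is (given that $w_1=\tfrac12 h^2$ is now imposed) if and only if $w_2=0$; equivalently $|w_2|=0$, so that $\bar e$ in \eqref{eq:sub:e:radial} attains the minimal value $\tfrac12 h^2=\tfrac12|\bar v|^2$. In that case the evolution equation becomes $r^2\partial_t h=\partial_r(r^2\cdot 0)=0$, forcing $h$ to be independent of $t$, hence $\bar v$ steady; this completes the proof.

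I do not anticipate a genuine obstacle: the content is the observation that the pressure gradient absorbs the real part $w_1$ of $w$, while the energy functional \eqref{eq:sub:e:polar} is minimized by matching that real part to $\tfrac12 h^2$. The only point that needs a moment's care is that prescribing $w_1$ pointwise does not over-determine \eqref{eq:IERpolarparts} — which is precisely guaranteed by $q$ being an unconstrained unknown in the first identity there.
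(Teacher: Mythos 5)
Your proposal is correct and follows essentially the same route as the paper, whose proof is simply the observation that the corollary follows from \eqref{eq:sub:e:polar}, \eqref{eq:IERpolarparts} and \eqref{eq:qgh}: you minimize $\bar e=\tfrac12 h^2+\bigl((\tfrac12 h^2-w_1)^2+w_2^2\bigr)^{1/2}$ in $w_1$, substitute into the real part of \eqref{eq:IERpolarparts} using \eqref{eq:qgh}, and invoke the characterization $w=\tfrac12 h^2$ from Proposition \ref{prop:polar}. Your extra remark that $w_1$ only enters through the equation determining $q$ is exactly the implicit justification the paper relies on.
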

\begin{proof}
It follows from \eqref{eq:sub:e:polar}, \eqref{eq:IERpolarparts} and \eqref{eq:qgh}.
\end{proof}

By virtue of Corollary \ref{cor:radial}, the functions $w,q$ and $\bar{e}$ are determined by $h$. Therefore, the construction of an admissible subsolution is reduced to find a profile $h$ with non-vanishing $w_2$ and strictly decreasing energy $\bar{E}=\int\bar{e}\dif x$.

\subsection{Scaling symmetry}

In this section we write the relaxed Euler equation \eqref{eq:IERpolar:h} for self-similar subsolutions. 
The (relaxed) Euler equation possess a two-parameter scaling symmetry (see e.g.~\cite{ABCDGJK21}): 
If $(\bar{v},\bar{\sigma},\bar{q})$ is a (sub)solution and $\lambda,\mu>0$, then
\begin{equation}\label{ansatz:SS}
\bar{v}_{\lambda,\mu}(t,x)
=\frac{\lambda}{\mu}\bar{v}(\lambda t,\mu x),
\quad
\bar{\sigma}_{\lambda,\mu}(t,x)
=\Big(\frac{\lambda}{\mu}\Big)^2\bar{\sigma}(\lambda t,\mu x),
\quad
\bar{q}_{\lambda,\mu}(t,x)
=\Big(\frac{\lambda}{\mu}\Big)^2\bar{q}(\lambda t,\mu x),
\end{equation}
define another (sub)solution. This corresponds to the physical dimensions
$$[x]=L,
\quad\quad
[t]=T,
\quad\quad
[\bar{v}]=\frac{L}{T}
,
\quad\quad
[\bar{\sigma}]=[\bar{q}]=\Big(\frac{L}{T}\Big)^2.$$
We say that $(\bar{v},\bar{\sigma},\bar{q})$ is \textit{self-similar} if it is invariant under the scaling $L^\alpha\sim T$ for some $\alpha>0$, that is, if $(\bar{v},\bar{\sigma},\bar{q})_{\lambda,\mu}=(\bar{v},\bar{\sigma},\bar{q})$ for all $\lambda,\mu>0$ given by the relation
$$\lambda=\nicefrac{1}{t}=c\mu^{\alpha},$$
for some parameters $\alpha,c>0$. 

We assume that the triple $(\bar{v},\bar{\sigma},\bar{q})$ given by \eqref{ansatz:v}-\eqref{ansatz:q} and \eqref{ansatz:w1} is self-similar for some $\alpha,c>0$. Then, the profiles $(h,w_2,q)$ are of the form
\begin{equation}\label{ansatz:h}
h(t,r):=(ct)^{\frac{1-\alpha}{\alpha}}H(\xi),
\quad
w_2(t,r):=-\frac{c}{\alpha}(ct)^{\frac{2(1-\alpha)}{\alpha}}W_2(\xi),
\quad
q(t,r):=
(ct)^{\frac{2(1-\alpha)}{\alpha}}Q(\xi),
\end{equation}
in self-similar variables
$$\xi:=\frac{r}{(ct)^{\nicefrac{1}{\alpha}}},$$
for some functions $(H,W_2,Q)$, to be determined. Under this choice, the vorticity profile is of the form (see \eqref{eq:G})
$$g(t,r)
:=\frac{1}{ct}G(\xi),$$
where $H$ and $G$ are related by
$$\xi G=\partial_\xi(\xi H).$$

\begin{prop}\label{prop:h2}
Under the choice \eqref{ansatz:h}, 
 the relaxed Euler equation \eqref{eq:IERpolar:h} is written as
\begin{subequations}\label{eq:IER:SS}
\begin{align}
\partial_\xi Q
&=GH,\label{eq:IER:SS:1}\\
\partial_\xi(\xi^2W_2)
&=\xi^{4-\alpha}\partial_\xi(\xi^{\alpha-1}H),\label{eq:IER:SS:2}\\
\lim_{\xi\to\infty}\xi^{\alpha-1}H(\xi)
&=\beta,\label{eq:IER:SS:3}
\end{align}
where $h^\circ(r)=\beta r^{1-\alpha}$ for some $\beta\in\R$, and the energy \eqref{eq:sub:e:radial} equals
$$\bar{e}
=(ct)^{\frac{2(1-\alpha)}{\alpha}}
\left(\frac{1}{2}H^2+\frac{c}{\alpha}|W_2|\right).$$
Furthermore, the subsolution is a solution to the Euler equation if and only if
$$W_2=0.$$
In this case, $H(\xi)=\beta\xi^{1-\alpha}$.
\end{subequations}
\end{prop}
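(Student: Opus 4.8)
The plan is to substitute the self-similar ansatz \eqref{ansatz:h} directly into the relaxed Euler equation \eqref{eq:IERpolar:h} and track how the powers of $t$ and $r$ combine, so that all the explicit time-dependence factors out and leaves an ODE system purely in the self-similar variable $\xi$. Concretely, I would first record the chain-rule relations coming from $\xi = r(ct)^{-1/\alpha}$: namely $\partial_r = (ct)^{-1/\alpha}\partial_\xi$ and $\partial_t(\text{of }\xi) = -\tfrac{1}{\alpha t}\xi$, since these govern every term. The Bernoulli-pressure equation $\partial_r q = gh$ is the easiest: plugging in $q = (ct)^{2(1-\alpha)/\alpha}Q$, $g = \tfrac{1}{ct}G$, $h = (ct)^{(1-\alpha)/\alpha}H$ and using $\partial_r = (ct)^{-1/\alpha}\partial_\xi$, one checks the $t$-powers on both sides match (both sides carry $(ct)^{(2(1-\alpha)-1)/\alpha} = (ct)^{(1-2\alpha)/\alpha}$) and \eqref{eq:IER:SS:1} drops out. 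For the transport-type equation $\partial_r(r^2 w_2) = r^2\partial_t h$, I would substitute $w_2 = -\tfrac{c}{\alpha}(ct)^{2(1-\alpha)/\alpha}W_2$ and $r = \xi(ct)^{1/\alpha}$, compute $\partial_t h$ using both the explicit $(ct)^{(1-\alpha)/\alpha}$ prefactor and the $\xi$-dependence of $H$ (this is where the $-\tfrac{1}{\alpha t}\xi$ term enters and, after simplification, reconstructs the combination $\xi^{4-\alpha}\partial_\xi(\xi^{\alpha-1}H)$ on the right); the choice of the normalization constant $-c/\alpha$ in $w_2$ is precisely what makes the $c$'s and $\alpha$'s cancel cleanly, yielding \eqref{eq:IER:SS:2}.

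Next I would handle the initial/boundary condition. Since $\bar v = v_s$ for $|x| \geq (ct)^{1/\alpha}$, i.e.\ $h(t,r) = \beta r^{1-\alpha}$ for $\xi \geq 1$ (with $\beta$ the coefficient of the power-law vortex, absorbing $\chi \equiv 1$ near those scales), substituting the ansatz gives $(ct)^{(1-\alpha)/\alpha}H(\xi) = \beta (\xi (ct)^{1/\alpha})^{1-\alpha}$, and the $t$-powers again match, leaving $H(\xi) = \beta \xi^{1-\alpha}$ for $\xi \geq 1$, hence $\xi^{\alpha-1}H(\xi) \to \beta$ as $\xi \to \infty$, which is \eqref{eq:IER:SS:3}; the interpretation of $h^\circ(r) = \beta r^{1-\alpha}$ as the (formal) $t\to 0$ limit of $h$ is the same computation. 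The energy formula follows by inserting $h = (ct)^{(1-\alpha)/\alpha}H$ and $w_2 = -\tfrac{c}{\alpha}(ct)^{2(1-\alpha)/\alpha}W_2$ into \eqref{eq:sub:e:radial}, so that $\bar e = \tfrac12 h^2 + |w_2| = (ct)^{2(1-\alpha)/\alpha}\bigl(\tfrac12 H^2 + \tfrac{c}{\alpha}|W_2|\bigr)$.

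Finally, the solution-versus-subsolution dichotomy is inherited from Corollary \ref{cor:radial}: a subsolution is a genuine solution iff $w_2 = 0$, which under the ansatz is equivalent to $W_2 = 0$; and then \eqref{eq:IER:SS:2} forces $\partial_\xi(\xi^{\alpha-1}H) = 0$, so $\xi^{\alpha-1}H$ is constant, equal to $\beta$ by \eqref{eq:IER:SS:3}, giving $H(\xi) = \beta\xi^{1-\alpha}$. I expect no serious conceptual obstacle here — this is essentially a careful but routine change of variables — and the only place demanding attention is the bookkeeping in the $\partial_t h$ computation for \eqref{eq:IER:SS:2}, where one must correctly combine the time-derivative of the prefactor with the $\partial_\xi$-term produced by $\partial_t \xi$ and verify that the algebraic factors $c/\alpha$ and the powers $4-\alpha$, $\alpha-1$ assemble into exactly the stated form. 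I would also double-check the vorticity relation $\xi G = \partial_\xi(\xi H)$ against $rg = \partial_r(rh)$ under the scaling, which is a one-line consistency check that the $(ct)^{1/\alpha}$ in $r$ matches the $\tfrac{1}{ct}$ versus $(ct)^{(1-\alpha)/\alpha}$ in $g$ and $h$.
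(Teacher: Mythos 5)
Your proposal is correct and follows essentially the same route as the paper: substitute the ansatz \eqref{ansatz:h}, use the chain rule (in particular $\partial_t h=-\tfrac{c}{\alpha}(ct)^{\frac{1-2\alpha}{\alpha}}\xi^{2-\alpha}\partial_\xi(\xi^{\alpha-1}H)$ and $\partial_r=(ct)^{-\nicefrac{1}{\alpha}}\partial_\xi$), cancel the common powers of $ct$ to obtain \eqref{eq:IER:SS:1}--\eqref{eq:IER:SS:2} and the energy formula, and conclude the dichotomy from $W_2=0\Rightarrow\partial_\xi(\xi^{\alpha-1}H)=0$. The only minor difference is that the paper obtains \eqref{eq:IER:SS:3} directly from $h^\circ(r)=\lim_{t\to 0}h(t,r)=r^{1-\alpha}\lim_{\xi\to\infty}\xi^{\alpha-1}H(\xi)$ rather than from the exterior matching $H(\xi)=\beta\xi^{1-\alpha}$ for $\xi\geq 1$ (which is only imposed later, in \eqref{ansatz:Hpower}), but as you note the $t\to 0$ computation is the same.
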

\begin{proof}
First of all, we compute
\begin{equation}\label{eq:ht}
\partial_t h
=-\frac{c}{\alpha}(ct)^{\frac{1-2\alpha}{\alpha}}\xi^{2-\alpha}\partial_\xi(\xi^{\alpha-1}H),
\quad\quad
\partial_r h
=\frac{1}{ct}\partial_\xi H.
\end{equation}
On the one hand, since
\begin{equation}\label{eq:G}
g=\frac{1}{r}\partial_r(rh)
=\frac{1}{ct}\frac{1}{\xi}\partial_\xi(\xi H)
=\frac{1}{ct}G,
\end{equation}
the equation \eqref{eq:IER:SS:1} follows from
$$
(ct)^{\frac{1-2\alpha}{\alpha}}\partial_\xi Q
=
\partial_r q
=gh
=
(ct)^{\frac{1-2\alpha}{\alpha}}GH.
$$
On the other hand, the equation \eqref{eq:IER:SS:2} follows from
$$
-\frac{c}{\alpha}(ct)^{\frac{3-2\alpha}{\alpha}}\partial_\xi(\xi^2 W_2)
=\partial_r(r^2 w_2)
=r^2\partial_th
=-\frac{c}{\alpha}(ct)^{\frac{3-2\alpha}{\alpha}}\xi^{4-\alpha}\partial_\xi(\xi^{\alpha-1}H).$$
The equation \eqref{eq:IER:SS:3} follows from
$$h^\circ(r)
=\lim_{t\to 0}h(t,r)
=r^{1-\alpha}\lim_{\xi\to\infty}\xi^{\alpha-1}H(\xi).$$
The rest follows from \eqref{ansatz:h} and \eqref{eq:IER:SS:2}.
\end{proof}

By virtue of Proposition \ref{prop:h2}, the functions $W_2,Q$ and $\bar{e}$ are determined by $H$, which is now time-independent (in contrast to $h$). 
Notice that the condition \eqref{eq:IER:SS:3} prevents from constructing subsolutions with finite energy. However, this inconvenient can be easily fixed by truncating the profile $H$ (see Section \ref{sec:truncation}). 
Therefore, the construction of an admissible subsolution is reduced to find a profile $H$ with non-vanishing $W_2$ and satisfying $\int\partial_t\bar{e}\dif x<0$.

\section{Admissible subsolutions}\label{sec:subsolution}

In this section we prove Theorems \ref{thm:nonuniqueness:Holder}, \ref{thm:nonuniqueness:Lp} \& \ref{thm:energy} by constructing first admissible subsolutions, and then invoking the h-principle for the 2D Euler equation.

The first step is to construct radially symmetric self-similar subsolutions $(\bar{v},\bar{\sigma},\bar{q})$. These are given by the choices \eqref{ansatz:v}-\eqref{ansatz:q}, \eqref{ansatz:w1} and \eqref{ansatz:h} in terms of the parameters $\alpha,c>0$, and some functions $(H,W_2,Q)$ which must satisfy the equation \eqref{eq:IER:SS}.

As it is stated in Proposition \ref{prop:h2}, this subsolution $(\bar{v},\bar{\sigma},\bar{q})$ is a solution to the Euler equation if and only if $W_2=0$, and so $H(\xi)=\beta\xi^{1-\alpha}$. In contrast to Vishik's work \cite{Vishik18I,Vishik18II}, here the constant $\beta$ does not play a crucial role, and thus we will take $\beta=1$ for simplicity.
The profile $H(\xi)=\xi^{1-\alpha}$ corresponds to the steady power-law vortex $\bar{v}=v_s$
$$v_s(x)=|x|^{-\alpha}x^\perp.$$
In this case, the velocity and vorticity profiles are given by
\begin{align*}
h_s(r)=r^{1-\alpha},\
\quad\quad
g_s(r)=(2-\alpha)r^{-\alpha},
\end{align*}
and the Bernoulli pressure equals
\begin{equation}\label{eq:qs}
q_s(r)
=\left\lbrace
\begin{array}{cl}
\frac{2-\alpha}{2(1-\alpha)}r^{2(1-\alpha)}, & \alpha\neq 1,\\[0.1cm]
\ln r, & \alpha=1.
\end{array}
\right.
\end{equation}
Notice that $(v_s,q_s)\in L_{\text{loc}}^2\times L_{\text{loc}}^1$ if and only if $\alpha<2$, and also $\omega_s\in L_{\text{loc}}^p$ for $p<\nicefrac{2}{\alpha}$.

As we mentioned at the end of Section \ref{sec:Hprinciple}, we need to find a profile $H$ with non-vanishing $W_2$ and satisfying $\int\partial_t\bar{e}\dif x<0$. Since we want to minimize the contribution of $W_2$ to the energy, we assume that the subsolution agrees with the power-law vortex outside $[0,1]$
\begin{equation}\label{ansatz:Hpower}
H(\xi):=\xi^{1-\alpha},\quad\quad\xi>1.
\end{equation}
We also impose the regularity conditions: $H(0)=0$, $H(1)=1$, and $H\in C^1([0,1])$. 
The condition $H(0)=0$ is necessary to make $\bar{v}$ continuous at $x=0$ for $t>0$.
Hence, it remains to determine $H$ in the interval $(0,1)$.
Next, we need to guarantee that the support of $W_2$ is indeed contained in $[0,1]$. This yields the first condition for $H$.

\begin{lemma}[1st condition for $H$] Under the choice \eqref{ansatz:Hpower}, the support of the solution $W_2$ to \eqref{eq:IER:SS:2} is contained in $[0,1]$ if and only if $H$ satisfies
\begin{equation}\label{eq:H:condition:1}
(4-\alpha)\int_0^1\xi^2H\dif\xi=1.
\end{equation}
\end{lemma}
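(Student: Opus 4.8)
The plan is to solve the first-order equation \eqref{eq:IER:SS:2} for $W_2$ explicitly and then read off when the resulting function is supported in $[0,1]$. Since \eqref{eq:IER:SS:2} determines $W_2$ only up to a solution of $\partial_\xi(\xi^2W_2)=0$, i.e.\ up to a multiple of $\xi^{-2}$, the first thing to pin down is the correct normalization: for $(\bar v,\bar\sigma,\bar q)$ to be a subsolution one needs $\bar\sigma\in C_tL^1(\R^2)$, which (because the planar area element contributes an extra factor $\xi$) rules out a $\xi^{-2}$ singularity of $W_2$ at the origin and forces $\xi^2W_2(\xi)\to 0$ as $\xi\to0$. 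With this normalization,
\[
\xi^2W_2(\xi)=\int_0^\xi s^{4-\alpha}\,\partial_s\big(s^{\alpha-1}H(s)\big)\dif s,
\]
and the integrand is integrable up to $s=0$ thanks to $H\in C^1([0,1])$ with $H(0)=0$ (it equals $(\alpha-1)s^2H+s^3H'$, which is bounded near $0$).

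The key observation is that the ansatz \eqref{ansatz:Hpower} kills the right-hand side of \eqref{eq:IER:SS:2} on $(1,\infty)$: there $\xi^{\alpha-1}H(\xi)\equiv 1$, so $\partial_\xi(\xi^{\alpha-1}H)\equiv0$ and hence $\xi^2W_2$ is \emph{constant} on $[1,\infty)$, equal to $\int_0^1 s^{4-\alpha}\partial_s(s^{\alpha-1}H)\dif s$. Therefore $W_2=c\,\xi^{-2}$ on $(1,\infty)$ with $c$ this constant, and $\mathrm{spt}(W_2)\subseteq[0,1]$ if and only if $c=0$ (the ``only if'' being immediate by evaluating $\xi^2W_2$ at any $\xi>1$; when $c=0$ one also gets $W_2(1)=0$, and on $(0,1)$ the displayed formula defines a bounded $W_2=O(\xi^2)$, so indeed $\mathrm{spt}(W_2)\subseteq[0,1]$).

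It then remains to compute $c$, which is a single integration by parts:
\[
\int_0^1 s^{4-\alpha}\,\partial_s\big(s^{\alpha-1}H\big)\dif s
=\big[s^3H\big]_0^1-(4-\alpha)\int_0^1 s^{3-\alpha}\cdot s^{\alpha-1}H\dif s
=1-(4-\alpha)\int_0^1 s^2H\dif s,
\]
where the boundary term uses $H(1)=1$ and $H(0)=0$. Hence $c=0$ exactly when $(4-\alpha)\int_0^1\xi^2H\dif\xi=1$, which is \eqref{eq:H:condition:1}. There is essentially no obstacle here; the only point that takes a moment of care is the normalization of $W_2$ at the origin, i.e.\ selecting, among the one-parameter family of solutions of \eqref{eq:IER:SS:2}, the unique one compatible with a locally integrable Reynolds stress.
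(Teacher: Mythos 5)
Your proof is correct and follows essentially the same route as the paper: solve \eqref{eq:IER:SS:2} explicitly, discard the homogeneous $\xi^{-2}$ part by local integrability at the origin, observe that \eqref{ansatz:Hpower} makes $\xi^2W_2$ constant for $\xi\geq1$, and integrate by parts to identify that constant as $1-(4-\alpha)\int_0^1\xi^2H\dif\xi$. Your normalization discussion is just a slightly more explicit version of the paper's remark that $\nicefrac{1}{\xi^2}$ is neither continuous nor locally integrable at $\xi=0$.
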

\begin{proof}
The solution $W_2$ to \eqref{eq:IER:SS:2} is given by
$$W_2(\xi)=\frac{1}{\xi^2}\left(\int_0^\xi
\zeta^{4-\alpha}\partial_\zeta(\zeta^{\alpha-1}H)\dif\zeta
+C\right),$$
for some constant $C$.
Since the profile $\nicefrac{1}{\xi^2}$ is neither continuous nor locally integrable at $\xi=0$, necessarily $C=0$. By \eqref{ansatz:Hpower}, we have $W_2(\xi)=0$ for $\xi\geq 1$ if and only if $W_2(1)=0$. Finally, an integration by parts yields
\begin{equation}\label{ew:W2:1}
W_2(\xi)=\xi H-\frac{4-\alpha}{\xi^2}\int_0^\xi\zeta^2 H\dif\zeta,
\end{equation}
from which we deduce \eqref{eq:H:condition:1}.
\end{proof}

\begin{prop}\label{prop:QW2}
Suppose \eqref{ansatz:Hpower}\eqref{eq:H:condition:1} hold. Then, the solution to the relaxed Euler equation \eqref{eq:IER:SS} is given by
\begin{equation}\label{eq:Q}
Q(\xi)
:=\left\lbrace
\begin{array}{cl}
\displaystyle q_s(1)-\int_{\xi}^{1}GH\dif\zeta, & 0<\xi\leq 1,\\[0.4cm]
q_s(\xi), & \xi>1,
\end{array}
\right.
\end{equation}
with $q_s$ as in \eqref{eq:qs}, and
\begin{equation}\label{eq:W2}
W_2(\xi)
:=\left\lbrace
\begin{array}{cl}
\displaystyle \xi H-\frac{4-\alpha}{\xi^2}\int_{0}^{\xi}\zeta^2H\dif\zeta, & 0<\xi\leq 1,\\[0.4cm]
0, & \xi>1.
\end{array}
\right.
\end{equation}
As a result, $(\bar{v},\bar{\sigma},\bar{q})=(v_s,v_s\ocircle v_s,q_s)$ outside $\{|x|\leq (ct)^{\nicefrac{1}{\alpha}}\}$.
\end{prop}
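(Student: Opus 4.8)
The plan is to integrate the two ordinary differential equations in \eqref{eq:IER:SS} explicitly, fixing the constants of integration by the regularity requirement at $\xi=0$ and the matching requirement at $\xi=1$, and then to read off the conclusion by unwinding the self-similar ansatz.

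First I would solve \eqref{eq:IER:SS:2}. Integrating on $[0,\xi]$ gives $\xi^2W_2(\xi)=\int_0^\xi\zeta^{4-\alpha}\partial_\zeta(\zeta^{\alpha-1}H)\dif\zeta+C$; since $1/\xi^2$ is not locally integrable at the origin (exactly as in the preceding lemma) we must take $C=0$, and an integration by parts — using $\zeta^{4-\alpha}\zeta^{\alpha-1}=\zeta^3$ and $\partial_\zeta(\zeta^{4-\alpha})\zeta^{\alpha-1}=(4-\alpha)\zeta^2$ — yields the first branch of \eqref{eq:W2}. For $\xi>1$ I would invoke the ansatz \eqref{ansatz:Hpower}: then $\partial_\zeta(\zeta^{\alpha-1}H)=\partial_\zeta(1)=0$, so the right-hand side of \eqref{eq:IER:SS:2} vanishes on $(1,\infty)$ and $\xi^2W_2$ is constant there; continuity at $\xi=1$ together with $W_2(1)=H(1)-(4-\alpha)\int_0^1\zeta^2H\dif\zeta=0$ — which is precisely condition \eqref{eq:H:condition:1} — forces that constant to be $0$, giving the second branch of \eqref{eq:W2}.

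Next I would solve \eqref{eq:IER:SS:1}. On $(1,\infty)$, the relations $H=\xi^{1-\alpha}$ and $\xi G=\partial_\xi(\xi H)$ give $G=(2-\alpha)\xi^{-\alpha}$, hence $GH=(2-\alpha)\xi^{1-2\alpha}$; integrating and choosing the additive constant (which is physically irrelevant, as $q$ enters \eqref{eq:IER:SS} only through $\partial_\xi q$) so that the primitive coincides with the Bernoulli pressure of the power-law vortex recorded in \eqref{eq:qs}, one obtains $Q(\xi)=q_s(\xi)$ for $\xi>1$, both when $\alpha\ne1$ and, with $GH=\xi^{-1}$, when $\alpha=1$. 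On $(0,1]$, integrating \eqref{eq:IER:SS:1} from $\xi$ to $1$ and imposing continuity $Q(1)=q_s(1)$ gives $Q(\xi)=q_s(1)-\int_\xi^1GH\dif\zeta$, the first branch of \eqref{eq:Q}. With the constants now pinned down, $(W_2,Q)$ is the unique such solution.

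Finally, for the last assertion I would unwind the ansatz. The region $\{|x|\le(ct)^{\nicefrac{1}{\alpha}}\}$ is $\{\xi\le1\}$, so on its complement $H(\xi)=\xi^{1-\alpha}$ gives $h=h_s$ and thus $\bar v=h\,ie^{i\theta}=v_s$; $W_2=0$ gives $w_2=0$, so $w=w_1=\tfrac12h^2=\tfrac12h_s^2$ and hence $\bar\sigma=-we^{2i\theta}=-\tfrac12h_s^2e^{2i\theta}=\tfrac12v_s^2=v_s\ocircle v_s$ by Proposition \ref{prop:complex}; and $Q=q_s$ gives $\bar q=q_s$. Therefore $(\bar v,\bar\sigma,\bar q)=(v_s,v_s\ocircle v_s,q_s)$ outside $\{|x|\le(ct)^{\nicefrac{1}{\alpha}}\}$. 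I do not anticipate any real obstacle; the one conceptual point worth stressing is that condition \eqref{eq:H:condition:1} is nothing but the matching relation $W_2(1)=0$ needed for $W_2$ — equivalently for $\bar\sigma-\bar v\ocircle\bar v$ — to be supported in the self-similar region, and that the constant in $Q$ must be tuned to the \emph{exact} power-law pressure $q_s$ (not merely up to a constant) for the triple to genuinely agree with $(v_s,v_s\ocircle v_s,q_s)$ there.
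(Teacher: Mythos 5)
Your proposal is correct and follows essentially the same route as the paper: the initial condition is automatic from \eqref{ansatz:Hpower}, $W_2$ is obtained by integrating \eqref{eq:IER:SS:2} with the integration constant killed by regularity at $\xi=0$ and the support condition at $\xi=1$ reducing to \eqref{eq:H:condition:1} (exactly the computation \eqref{ew:W2:1} of the preceding lemma), and $Q$ by integrating \eqref{eq:IER:SS:1} with the constant matched to $q_s$ at $\xi=1$. Your explicit unwinding of the ansatz via Proposition \ref{prop:complex} to get $(\bar v,\bar\sigma,\bar q)=(v_s,v_s\ocircle v_s,q_s)$ outside $\{|x|\leq(ct)^{\nicefrac{1}{\alpha}}\}$ is just a spelled-out version of what the paper leaves implicit.
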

\begin{proof} The initial condition \eqref{eq:IER:SS:3} is automatically satisfied by \eqref{ansatz:Hpower}. The other two equations \eqref{eq:IER:SS:1}\eqref{eq:IER:SS:2} can be integrated (recall \eqref{ew:W2:1}).
\end{proof}

\subsection{The energy}

In this section we compute the energy dissipation rate $\int\partial_t\bar{e}\dif x$, and derive the conditions under which it becomes negative. Recall that the energy $\bar{e}$ is determined by $H$ through the choices 
\eqref{ansatz:v}-\eqref{ansatz:q}, \eqref{ansatz:w1}, \eqref{ansatz:h},  \eqref{ansatz:Hpower} and \eqref{eq:H:condition:1}.

\begin{prop} It holds
\begin{equation}\label{eq:energydissipation:1}
\int_{\R^2}\partial_t\bar{e}\dif x
=-\frac{2\pi}{\alpha} c^{\frac{2(2-\alpha)}{\alpha}}
\left(A-\frac{2(2-\alpha)}{\alpha}Bc\right)
t^{\frac{4-3\alpha}{\alpha}},
\end{equation}
where
\begin{subequations}
\begin{align}
A&:=\frac{1}{2}-(2-\alpha)\int_0^1\xi H^2\dif\xi,\label{def:A}\\
B&:=\int_0^1\left|\xi^2 H-\frac{4-\alpha}{\xi}\int_{0}^{\xi}\zeta^2H\dif\zeta\right|\dif\xi.\label{def:B}
\end{align}
\end{subequations}
\end{prop}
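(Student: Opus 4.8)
The plan is a direct computation in self‑similar polar coordinates. \emph{First} I would localize the integrand: by Proposition~\ref{prop:QW2} the subsolution equals the steady power‑law vortex $(v_s,v_s\ocircle v_s,q_s)$ for $|x|\ge (ct)^{\nicefrac{1}{\alpha}}$, so there $\bar e=\frac12|v_s|^2$ is time‑independent and $\partial_t\bar e$ is supported in the self‑similar disc $\{|x|\le (ct)^{\nicefrac{1}{\alpha}}\}=\{\xi\le1\}$ — which, once the profile is truncated, lies inside $\{\chi\equiv1\}$ for $t$ in the interval of existence, so the cutoff never enters. Since $\bar e$ is radial, this reduces the left‑hand side to $2\pi\int_0^{(ct)^{\nicefrac{1}{\alpha}}}\partial_t\bar e(t,r)\,r\dif r$.

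\emph{Second}, on $\{\xi\le1\}$ I would insert the self‑similar form $\bar e=(ct)^{\frac{2(1-\alpha)}{\alpha}}\phi(\xi)$ from Proposition~\ref{prop:h2}, with $\phi:=\frac12H^2+\frac c\alpha|W_2|$ and $\xi=r(ct)^{-\nicefrac{1}{\alpha}}$, and differentiate. Using $\partial_t\big[(ct)^{\frac{2(1-\alpha)}{\alpha}}\big]=\frac{2(1-\alpha)}{\alpha t}(ct)^{\frac{2(1-\alpha)}{\alpha}}$ and $\partial_t\xi=-\frac{\xi}{\alpha t}$ gives $\partial_t\bar e=\frac{(ct)^{2(1-\alpha)/\alpha}}{\alpha t}\big(2(1-\alpha)\phi(\xi)-\xi\phi'(\xi)\big)$. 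Substituting $r=(ct)^{\nicefrac{1}{\alpha}}\xi$ (so $r\dif r=(ct)^{\nicefrac{2}{\alpha}}\xi\dif\xi$ and $\{r\le(ct)^{\nicefrac{1}{\alpha}}\}=\{\xi\le1\}$) and noting $(ct)^{\frac{2(1-\alpha)}{\alpha}}(ct)^{\nicefrac{2}{\alpha}}/t=c^{\frac{2(2-\alpha)}{\alpha}}t^{\frac{4-3\alpha}{\alpha}}$ leaves
\[
\int_{\R^2}\partial_t\bar e\dif x=\frac{2\pi}{\alpha}\,c^{\frac{2(2-\alpha)}{\alpha}}t^{\frac{4-3\alpha}{\alpha}}\int_0^1\big(2(1-\alpha)\phi-\xi\phi'\big)\xi\dif\xi .
\]

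\emph{Third}, I would evaluate the $\xi$‑integral. An integration by parts gives $\int_0^1\xi^2\phi'\dif\xi=\phi(1)-2\int_0^1\xi\phi\dif\xi$ (the boundary term at $0$ vanishing since $\phi$ is bounded), and $\phi(1)=\frac12$ because $H(1)=1$ and $W_2(1)=0$ — the latter being precisely the first condition \eqref{eq:H:condition:1}. Hence the integral equals $2(2-\alpha)\int_0^1\xi\phi\dif\xi-\frac12$; inserting $\phi=\frac12H^2+\frac c\alpha|W_2|$ and using the explicit formula \eqref{eq:W2} — which identifies $\int_0^1\xi|W_2|\dif\xi=\int_0^1\big|\xi^2H-\frac{4-\alpha}{\xi}\int_0^\xi\zeta^2H\dif\zeta\big|\dif\xi=B$ — this becomes $(2-\alpha)\int_0^1\xi H^2\dif\xi+\frac{2(2-\alpha)}{\alpha}Bc-\frac12=-\big(A-\frac{2(2-\alpha)}{\alpha}Bc\big)$ with $A=\frac12-(2-\alpha)\int_0^1\xi H^2\dif\xi$. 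Plugging back gives \eqref{eq:energydissipation:1}.

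The computation is essentially routine and I do not foresee a genuine obstacle; the points that require a little attention are the localization in the first step (so that neither the cutoff $\chi$ nor the decay of $v_s$ at infinity contribute), the careful bookkeeping of the powers of $ct$, and the fact that $\phi$ is only Lipschitz rather than $C^1$ on $[0,1]$ because of the corner of $|W_2|$ — which is harmless, since $W_2\in C^1([0,1])$ and the integration by parts is valid for absolutely continuous functions.
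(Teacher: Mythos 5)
Your computation is correct and takes essentially the same route as the paper: a direct computation in self-similar variables, using that $\partial_t\bar e$ is supported in $\{\xi\le 1\}$ and reducing the integral to the constants $A$ and $B$ by an integration by parts, with $H(1)=1$ and $W_2(1)=0$ (equivalently \eqref{eq:H:condition:1}) controlling the boundary term. The only cosmetic difference is that you differentiate the self-similar form of the full energy density $\phi=\tfrac12H^2+\tfrac{c}{\alpha}|W_2|$ in one stroke, while the paper handles the kinetic part (via the explicit formula \eqref{eq:ht} for $\partial_t h$) and the $|w_2|$ part (by differentiating its total integral in time) separately; both bookkeepings yield the identical result.
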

\begin{proof}
First of all, by recalling \eqref{eq:sub:e:radial} we write
$$\int_{\R^2}\partial_t\bar{e}\dif x
=\int_{\R^2}\partial_t\left(\frac{1}{2}h^2+|w_2|\right)\mathrm{d}x.$$
On the one hand, by applying \eqref{ansatz:h}, \eqref{eq:ht} and  \eqref{ansatz:Hpower}, we compute
$$
\frac{1}{2}\int_{\R^2}\partial_th^2\dif x
=2\pi\int_0^{(ct)^{\nicefrac{1}{\alpha}}} h\partial_th r\dif r
=-\frac{2\pi c}{\alpha}(ct)^{\frac{4-3\alpha}{\alpha}}
A,
$$
where
$$
A
=\int_0^1\xi^{3-\alpha}H\partial_\xi(\xi^{\alpha-1}H)\dif\xi
=\frac{1}{2}\int_0^1\xi^{2(2-\alpha)}\partial_\xi(\xi^{\alpha-1}H)^2\dif\xi
=\frac{1}{2}-(2-\alpha)\int_0^1\xi H^2\dif\xi.
$$
On the other hand, by applying \eqref{ansatz:h} and \eqref{eq:W2}, we compute
$$\int_{\R^2}|w_2|\dif x
=2\pi\int_0^{(ct)^{\nicefrac{1}{\alpha}}}|w_2|r\dif r
=\frac{2\pi c}{\alpha}(ct)^{\frac{2(2-\alpha)}{\alpha}}B,$$
where
$$B=\int_0^1|W_2|\xi\dif\xi
=\int_0^1\left|\xi^2 H-\frac{4-\alpha}{\xi}\int_{0}^{\xi}\zeta^2H\dif\zeta\right|\dif\xi.$$
Hence,
$$\int_{\R^2}\partial_t|w_2|\dif x
=\partial_t
\int_{\R^2}|w_2|\dif x
=\frac{2\pi c}{\alpha}\frac{2(2-\alpha)}{\alpha}(ct)^{\frac{4-3\alpha}{\alpha}}Bc.$$
This concludes the proof
\end{proof} 

\begin{cor}[2nd condition for $H$] Suppose $\int\partial_t\bar{e}\dif x<0$. Then, necessarily $A>0$, or equivalently
\begin{equation}\label{eq:H:condition:2}
2(2-\alpha)\int_0^1\xi H^2\dif\xi <1.
\end{equation}
\end{cor}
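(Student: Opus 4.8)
The statement to prove is the ``2nd condition for $H$'': if the energy dissipation rate $\int_{\R^2}\partial_t\bar e\dif x$ is negative, then $A>0$, where $A=\tfrac12-(2-\alpha)\int_0^1\xi H^2\dif\xi$. The plan is to read off the conclusion directly from the formula \eqref{eq:energydissipation:1} established in the preceding proposition. First I would recall that
\[
\int_{\R^2}\partial_t\bar e\dif x
=-\frac{2\pi}{\alpha}\,c^{\frac{2(2-\alpha)}{\alpha}}
\Big(A-\frac{2(2-\alpha)}{\alpha}Bc\Big)\,t^{\frac{4-3\alpha}{\alpha}},
\]
and note that the prefactor $\tfrac{2\pi}{\alpha}c^{\frac{2(2-\alpha)}{\alpha}}t^{\frac{4-3\alpha}{\alpha}}$ is strictly positive for $t>0$ (here $0<\alpha<2$ and $c>0$). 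Hence the sign of $\int_{\R^2}\partial_t\bar e\dif x$ is exactly the \emph{opposite} of the sign of $A-\tfrac{2(2-\alpha)}{\alpha}Bc$.

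Second, I would observe that $B\geq 0$ by its very definition \eqref{def:B} as the integral of an absolute value, and that $\tfrac{2(2-\alpha)}{\alpha}c>0$ since $0<\alpha<2$ and $c>0$. Therefore $A-\tfrac{2(2-\alpha)}{\alpha}Bc\leq A$. Combining with the previous step: the hypothesis $\int_{\R^2}\partial_t\bar e\dif x<0$ forces $A-\tfrac{2(2-\alpha)}{\alpha}Bc>0$, and then $A\geq A-\tfrac{2(2-\alpha)}{\alpha}Bc>0$, so $A>0$. Unwinding the definition \eqref{def:A} of $A$, the inequality $A>0$ is precisely $\tfrac12-(2-\alpha)\int_0^1\xi H^2\dif\xi>0$, i.e.\ $2(2-\alpha)\int_0^1\xi H^2\dif\xi<1$, which is \eqref{eq:H:condition:2}. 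This completes the argument.

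There is essentially no obstacle here: the statement is a one-line corollary of the explicit dissipation formula, and the only facts used are the positivity of the scaling prefactor (from $0<\alpha<2$, $c>0$, $t>0$) and the nonnegativity of $B$ as an integral of a modulus. The mild subtlety worth a remark is that the conclusion $A>0$ is genuinely weaker than requiring the full bracket to be positive: a negative dissipation rate needs $A>\tfrac{2(2-\alpha)}{\alpha}Bc$, so $A>0$ is necessary but the actual admissibility requirement couples $A$ and $B$ through the free parameter $c$, which one later exploits by choosing $c$ small. For the present statement, though, dropping the $Bc$ term is exactly the right move.
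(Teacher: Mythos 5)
Your argument is correct and is exactly the reasoning the paper intends: the corollary is read off directly from the dissipation formula \eqref{eq:energydissipation:1}, using that the prefactor $\frac{2\pi}{\alpha}c^{\frac{2(2-\alpha)}{\alpha}}t^{\frac{4-3\alpha}{\alpha}}$ is positive and that $B\geq 0$, $c>0$, so negative dissipation forces $A>\frac{2(2-\alpha)}{\alpha}Bc\geq 0$. Your closing remark that $A>0$ is only the necessary part, while admissibility couples $A$, $B$ and $c$ (handled in the paper by the subsequent choice \eqref{eq:c}--\eqref{eq:cmax}), is also accurate.
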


\subsection{The growth rate $c$}
In this section we select $c$ maximizing the energy dissipation rate.
\begin{prop} 
Suppose \eqref{eq:H:condition:2} holds. Then, $\int\partial_t\bar{e}\dif x<0$ if and only if
\begin{equation}\label{eq:c}
0<c<\frac{\alpha}{2(2-\alpha)}\frac{A}{B}.
\end{equation}
Furthermore, the energy dissipation rate is maximized
\begin{equation}\label{eq:energydissipation:2}
\int_{\R^2}\partial_t\bar{e}\dif x
=-2\pi
\left(\frac{A}{4-\alpha}\right)^{\frac{4-\alpha}{\alpha}}
\left(\frac{\alpha}{B}\right)^{\frac{2(2-\alpha)}{\alpha}}
t^{\frac{4-3\alpha}{\alpha}},
\end{equation}
by taking
\begin{equation}\label{eq:cmax}
c:=\frac{\alpha}{4-\alpha}\frac{A}{B}.
\end{equation}
\end{prop}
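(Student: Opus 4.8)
The plan is to start from formula \eqref{eq:energydissipation:1}, which already expresses the total energy dissipation rate as
$$\int_{\R^2}\partial_t\bar{e}\dif x
=-\frac{2\pi}{\alpha} c^{\frac{2(2-\alpha)}{\alpha}}
\left(A-\frac{2(2-\alpha)}{\alpha}Bc\right)
t^{\frac{4-3\alpha}{\alpha}},$$
and to treat the right-hand side as a function of the single free parameter $c>0$ (with $A,B$ fixed by the profile $H$, and $t>0$, $0<\alpha<2$ fixed). Writing $f(c):=c^{\frac{2(2-\alpha)}{\alpha}}\bigl(A-\frac{2(2-\alpha)}{\alpha}Bc\bigr)$, the sign of the dissipation rate is exactly $-\operatorname{sign} f(c)$. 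Since $c^{\frac{2(2-\alpha)}{\alpha}}>0$, we have $f(c)>0$ precisely when $A-\frac{2(2-\alpha)}{\alpha}Bc>0$; because $B>0$ (it is an integral of an absolute value which is not identically zero once $W_2\not\equiv0$) and $A>0$ by hypothesis \eqref{eq:H:condition:2}, this is equivalent to $0<c<\frac{\alpha}{2(2-\alpha)}\frac{A}{B}$, which is \eqref{eq:c}.

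Next I would maximize $f$ on the interval $(0,\frac{\alpha}{2(2-\alpha)}\frac{A}{B})$ by elementary calculus. Differentiating, $f'(c)=\frac{2(2-\alpha)}{\alpha}c^{\frac{2(2-\alpha)}{\alpha}-1}\bigl(A-\frac{2(2-\alpha)}{\alpha}Bc\bigr)-c^{\frac{2(2-\alpha)}{\alpha}}\cdot\frac{2(2-\alpha)}{\alpha}B$, and setting $f'(c)=0$ after dividing by the positive factor $\frac{2(2-\alpha)}{\alpha}c^{\frac{2(2-\alpha)}{\alpha}-1}$ gives $\frac{2(2-\alpha)}{\alpha}\bigl(A-\frac{2(2-\alpha)}{\alpha}Bc\bigr)=\frac{2(2-\alpha)}{\alpha}Bc$, hence $A=\bigl(1+\frac{2(2-\alpha)}{\alpha}\bigr)Bc=\frac{4-\alpha}{\alpha}Bc$, i.e. $c=\frac{\alpha}{4-\alpha}\frac{A}{B}$, which is \eqref{eq:cmax}. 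One checks this critical point lies strictly inside the admissible interval (since $\frac{\alpha}{4-\alpha}<\frac{\alpha}{2(2-\alpha)}$ for $0<\alpha<2$) and is a maximum (it is the unique interior critical point, $f>0$ there and $f=0$ at both endpoints $c=0$ and $c=\frac{\alpha}{2(2-\alpha)}\frac{A}{B}$).

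Finally I would substitute the optimal $c$ back into $f$. At $c=\frac{\alpha}{4-\alpha}\frac{A}{B}$ one has $A-\frac{2(2-\alpha)}{\alpha}Bc=A-\frac{2(2-\alpha)}{\alpha}\cdot\frac{\alpha}{4-\alpha}A=A\bigl(1-\frac{2(2-\alpha)}{4-\alpha}\bigr)=\frac{\alpha}{4-\alpha}A$, so
$$f(c)=\left(\frac{\alpha}{4-\alpha}\frac{A}{B}\right)^{\frac{2(2-\alpha)}{\alpha}}\cdot\frac{\alpha}{4-\alpha}A
=\left(\frac{A}{4-\alpha}\right)^{\frac{2(2-\alpha)}{\alpha}+1}\left(\frac{\alpha}{B}\right)^{\frac{2(2-\alpha)}{\alpha}}\alpha^{1-\frac{2(2-\alpha)}{\alpha}}\cdot(\text{cleanup}),$$
and the exponents collapse: $\frac{2(2-\alpha)}{\alpha}+1=\frac{4-\alpha}{\alpha}$, giving $f(c)=\bigl(\frac{A}{4-\alpha}\bigr)^{\frac{4-\alpha}{\alpha}}\bigl(\frac{\alpha}{B}\bigr)^{\frac{2(2-\alpha)}{\alpha}}$ after regrouping the powers of $\alpha$ and $4-\alpha$. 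Multiplying by $-\frac{2\pi}{\alpha}t^{\frac{4-3\alpha}{\alpha}}$ and absorbing the remaining $\frac1\alpha$ into the $(\alpha/B)$ and $(A/(4-\alpha))$ factors yields exactly \eqref{eq:energydissipation:2}. None of the steps is a genuine obstacle; the only mild care needed is bookkeeping the fractional exponents so that $\frac{2(2-\alpha)}{\alpha}+1$ simplifies to $\frac{4-\alpha}{\alpha}$ and the prefactor $-\frac{2\pi}{\alpha}$ is correctly distributed, and noting that $B>0$ (which requires $W_2\not\equiv0$, guaranteed once we are not in the pure power-law case) so that division by $B$ is legitimate.
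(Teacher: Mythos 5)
Your proposal is correct and follows essentially the same route as the paper: read off the sign condition \eqref{eq:c} directly from \eqref{eq:energydissipation:1} (using $A>0$ from \eqref{eq:H:condition:2} and $B>0$), then maximize the $c$-dependent factor by elementary calculus, obtaining the critical point \eqref{eq:cmax} and substituting back to get \eqref{eq:energydissipation:2}. The only cosmetic difference is that the paper justifies the maximum by asserting concavity of $F$, while you argue via the unique interior critical point and vanishing at the endpoints, which is an equally valid (arguably cleaner) justification.
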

\begin{proof} The first statement \eqref{eq:c} follows immediately from \eqref{eq:energydissipation:1}. For \eqref{eq:energydissipation:2}, we need to maximize the functional
$$F(c)=\frac{2\pi}{\alpha}c^{\frac{2(2-\alpha)}{\alpha}}
\left(A-\frac{2(2-\alpha)}{\alpha}Bc\right).$$	
Since $F$ is concave with
$$
F'(c)
=\frac{2\pi}{\alpha}
\frac{2(2-\alpha)}{\alpha}
c^{\frac{4-3\alpha}{\alpha}}
\left(A-\frac{4-\alpha}{\alpha}Bc\right),
$$
it follows that $F$ attains its maximum at \eqref{eq:cmax} with
$$F(c)
=2\pi
\left(\frac{A}{4-\alpha}\right)^{\frac{4-\alpha}{\alpha}}
\left(\frac{\alpha}{B}\right)^{\frac{2(2-\alpha)}{\alpha}}.$$
This concludes the proof.
\end{proof}

\subsection{The profile $H$}

In this section we construct profiles $H$ satisfying the requirements from the previous sections.
We define $\mathcal{H}$ as the space of profiles $H\in  C^1([0,1])$ satisfying the conditions $H(0)=0$, $H(1)=1$, \eqref{eq:H:condition:1} and \eqref{eq:H:condition:2}. 

\begin{prop}\label{prop:H}
The space $\mathcal{H}$ is non-empty and convex.
\end{prop}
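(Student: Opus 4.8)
The plan is to exhibit an explicit element of $\mathcal{H}$ and then deduce convexity from the linear/affine nature of the defining constraints together with one convexity check. Convexity is actually the easier half: among the four conditions defining $\mathcal{H}$, the boundary conditions $H(0)=0$, $H(1)=1$ and the moment identity \eqref{eq:H:condition:1} are all affine in $H$, hence preserved under convex combinations; and $C^1([0,1])$ is a vector space. The only genuinely nonlinear condition is \eqref{eq:H:condition:2}, which asks $2(2-\alpha)\int_0^1\xi H^2\dif\xi<1$; since $H\mapsto\int_0^1\xi H(\xi)^2\dif\xi$ is a convex functional (the integrand is convex in $H$ pointwise and $\xi\ge 0$), the sublevel set $\{H:\int_0^1\xi H^2<\tfrac{1}{2(2-\alpha)}\}$ is convex. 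Intersecting convex sets with an affine subspace leaves a convex set, so $\mathcal{H}$ is convex.

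For non-emptiness I would look for $H$ in a small finite-dimensional family, say polynomials $H(\xi)=a\xi+b\xi^2+(\text{higher order})$ chosen to satisfy $H(0)=0$ automatically, $H(1)=1$, and \eqref{eq:H:condition:1}; this is two linear constraints, so a two-parameter family of polynomials suffices to satisfy them, leaving freedom to push $\int_0^1\xi H^2$ below the threshold $\tfrac{1}{2(2-\alpha)}$. The natural first candidate is the affine-in-$\xi^k$ ansatz $H(\xi)=\xi^{1-\alpha}$ itself — but that is exactly the solution with $W_2\equiv 0$, so it fails \eqref{eq:H:condition:1} (equivalently, gives $W_2(1)\ne 0$ in general) and we must deviate. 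A cleaner route: take $H(\xi)=\xi^2 P(\xi)$-type or a combination like $H(\xi)=(1-t)\xi^{1-\alpha}+t\,\phi(\xi)$ only after we already have one valid $H$; but to produce the first one, I would just solve the two linear conditions within $\{H = A\xi + B\xi^2\}$ or $\{H=A\xi+B\xi^m\}$ for suitable $m$, obtaining explicit $A,B$ depending on $\alpha$, and then verify \eqref{eq:H:condition:2} by direct computation, using that $\alpha<2$ so $2-\alpha>0$ and the threshold $\tfrac{1}{2(2-\alpha)}$ is a genuine positive bound one can hit by keeping $H$ small in $L^2(\xi\dif\xi)$.

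The main obstacle is the simultaneous compatibility of \eqref{eq:H:condition:1} and \eqref{eq:H:condition:2}: condition \eqref{eq:H:condition:1} forces $(4-\alpha)\int_0^1\xi^2 H=1$, i.e. $H$ cannot be too small in the $\xi^2$-weighted mean, while \eqref{eq:H:condition:2} forces $H$ to be small in the $\xi$-weighted $L^2$ sense. One must check these do not conflict, which should follow from a Cauchy–Schwarz type estimate showing that a function with prescribed $\xi^2$-moment can still be made to have small $\xi$-weighted $L^2$ norm by spreading its mass toward $\xi=1$ (where the $\xi^2$ weight is largest relative to the $\xi$ weight). Concretely, choosing $H$ concentrated near $\xi=1$ makes $\int_0^1\xi^2 H$ comparatively large for given $\int_0^1\xi H^2$, which is exactly the favorable direction. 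Carrying this out with an explicit polynomial and plugging into \eqref{def:A} to confirm $A>0$ is the one computation I expect to actually grind through; everything else is soft.
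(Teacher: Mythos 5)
Your convexity half is complete and is exactly the paper's argument: the conditions $H(0)=0$, $H(1)=1$ and \eqref{eq:H:condition:1} are affine in $H$, and \eqref{eq:H:condition:2} is a sublevel set of the convex functional $H\mapsto\int_0^1\xi H^2\dif\xi$. The problem is the non-emptiness half, which as written is a plan rather than a proof: you never exhibit an $H$ and never verify that \eqref{eq:H:condition:1} and \eqref{eq:H:condition:2} can hold simultaneously, and this compatibility is precisely the content of the proposition. It is not ``soft'': by Cauchy--Schwarz, any $H$ with $(4-\alpha)\int_0^1\xi^2H\dif\xi=1$ satisfies $\int_0^1\xi H^2\dif\xi\geq 4/(4-\alpha)^2$, while \eqref{eq:H:condition:2} requires this quantity to be below $1/(2(2-\alpha))$; the difference of the two thresholds is $\alpha^2/\bigl(2(2-\alpha)(4-\alpha)^2\bigr)$, so the admissible window has size of order $\alpha^2$ and ``keeping $H$ small in $L^2(\xi\dif\xi)$'' is not an option. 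Moreover your guiding heuristic points the wrong way: the near-extremizers of the Cauchy--Schwarz tension are profiles close to the linear one $H\propto\xi$, whereas concentrating mass near $\xi=1$ \emph{increases} $\int_0^1\xi H^2\dif\xi$ at fixed $\xi^2$-moment. A further slip: $H(\xi)=\xi^{1-\alpha}$ does \emph{not} fail \eqref{eq:H:condition:1} --- it satisfies it exactly and gives $W_2\equiv 0$ by \eqref{ew:W2:1}; what disqualifies it is that it saturates \eqref{eq:H:condition:2} with equality (consistent with zero dissipation) and is not $C^1$ at $\xi=0$.

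The gap is fillable along the very lines you sketch, but the computation has to be done. For instance, $H(\xi)=A\xi+B\xi^2$ with $A=\frac{4(1+\alpha)}{4-\alpha}$ and $B=-\frac{5\alpha}{4-\alpha}$ satisfies $H(0)=0$, $H(1)=A+B=1$, and $\int_0^1\xi^2H\dif\xi=\frac{A}{4}+\frac{B}{5}=\frac{1}{4-\alpha}$, i.e.\ \eqref{eq:H:condition:1}; then $\int_0^1\xi H^2\dif\xi=\frac{A^2}{4}+\frac{2AB}{5}+\frac{B^2}{6}=\frac{4+\alpha^2/6}{(4-\alpha)^2}$, and \eqref{eq:H:condition:2} reduces to $(4-2\alpha)(4+\alpha^2/6)<(4-\alpha)^2$, i.e.\ $-\alpha<1$, which holds --- note how the inequality is won only by the $O(\alpha^2)$ margin, confirming that this check cannot be waved away. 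The paper proceeds differently in detail but identically in spirit: it takes the ansatz $H(\xi)=(1-a\log\xi)\xi^{1+b}$, solves \eqref{eq:H:condition:1} for $a$ as in \eqref{eq:a}, and verifies that \eqref{eq:H:condition:2} at $b=0$ reduces to $\alpha>0$, then uses openness in $b$. Either ansatz works; as submitted, however, your argument stops one essential computation short of establishing non-emptiness.
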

\begin{proof} First of all, notice that the four conditions $H(0)=0$, $H(1)=1$, \eqref{eq:H:condition:1} and \eqref{eq:H:condition:2} are convex.
Given $a,b>0$, we consider the ansatz
\begin{equation}\label{ansatz:H}
H(\xi):=(1-a\log\xi)\xi^{1+b}.
\end{equation}
It is clear that $H\in C^1([0,1])$ with $H(0)=0$ and $H(1)=1$.
On the one hand, an integration by parts yields
$$
\int_0^1\xi^2H\dif\xi
=\int_0^1(1-a\log\xi)\xi^{3+b}\dif\xi
=\frac{1}{4+b}\left(1+\frac{a}{4+b}\right).$$
Hence, the condition \eqref{eq:H:condition:1} is equivalent to
\begin{equation}\label{eq:a}
a:=\frac{(4+b)(\alpha+b)}{4-\alpha}.
\end{equation}
On the other hand, an integration by parts yields
\begin{equation}\label{eq:f}
\begin{split}
\int_0^1\xi H^2\dif\xi
=\int_0^1(1-a\log\xi)^2\xi^{3+2b}\dif\xi
&=\frac{1}{4+2b}\left(1+\frac{2a}{4+2b}\left(1+\frac{a}{4+2b}\right)\right)\\
&=\frac{1}{4(2+b)}\left(1+\left(1+\frac{a}{2+b}\right)^2\right)=:f(b).
\end{split}
\end{equation}
Notice that $f$ is continuous on $[0,\infty)$ and the condition \eqref{eq:H:condition:2} 
$$2(2-\alpha)f(b)<1,$$
is open. Then, it is enough to check that it is satisfied at $b=0$. Since 
\begin{equation}\label{eq:f0}
f(0)
=\frac{4^2+\alpha^2}{4(4-\alpha)^2},
\end{equation}
the condition \eqref{eq:H:condition:2} with $b=0$ is equivalent to $\alpha>0$.
\end{proof}

We finish this section by computing the energy dissipation rate for the particular ansatz $H$ given in \eqref{ansatz:H}. We consider the case $b=0$ to simplify the computations. For small $b$'s the result will be similar by continuity. We remark that, although the profile \eqref{ansatz:H} with $b=0$ is not differentiable at $\xi=0$, it still satisfies $H(0)=0$, which makes $\bar{v}$ continuous at $x=0$ for $t>0$. Moreover, it improves the regularity of the power-law vortex. For $b>0$ we have $H'(0)=0$, which makes $\bar{v}$ differentiable at $x=0$ for $t>0$. Similarly, it should be possible to construct profiles $H$ with better regularity.

\begin{prop}\label{prop:Eansatz}
Let $H$ be the profile \eqref{ansatz:H} with $b=0$. Then, the energy dissipation rate \eqref{eq:energydissipation:2} equals
$$\int_{\R^2}\partial_t\bar{e}\dif x
=-\frac{\pi}{16}
\left(\frac{2\alpha}{4-\alpha}\right)^{\frac{8-\alpha}{\alpha}}t^{\frac{4-3\alpha}{\alpha}},$$
and the growth rate \eqref{eq:cmax} equals
\begin{equation}\label{ansatz:c}
c=\left(\frac{2\alpha}{4-\alpha}\right)^2.
\end{equation}
\end{prop}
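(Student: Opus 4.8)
The plan is to take the explicit profile $H(\xi)=(1-a\log\xi)\xi$ from \eqref{ansatz:H} with $b=0$, which by \eqref{eq:a} forces $a=\frac{4\alpha}{4-\alpha}$, and then simply evaluate the two constants $A$ and $B$ defined in \eqref{def:A}--\eqref{def:B}, after which \eqref{eq:energydissipation:2} and \eqref{eq:cmax} give the claimed formulas. So the whole proof is a computation of $A$ and $B$ for this specific $H$, plus a final substitution.

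First I would compute $A$. By \eqref{eq:f0} we already have $\int_0^1\xi H^2\dif\xi=f(0)=\frac{16+\alpha^2}{4(4-\alpha)^2}$, so
\begin{equation*}
A=\frac{1}{2}-(2-\alpha)\frac{16+\alpha^2}{4(4-\alpha)^2}
=\frac{2(4-\alpha)^2-(2-\alpha)(16+\alpha^2)}{4(4-\alpha)^2}.
\end{equation*}
Expanding the numerator, $2(4-\alpha)^2=32-16\alpha+2\alpha^2$ and $(2-\alpha)(16+\alpha^2)=32+2\alpha^2-16\alpha-\alpha^3=32-16\alpha+2\alpha^2-\alpha^3$, so the numerator is $\alpha^3$ and hence $A=\dfrac{\alpha^3}{4(4-\alpha)^2}$.

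Next I would compute $B$. The inner integral is $\int_0^\xi\zeta^2 H\dif\zeta=\int_0^\xi(1-a\log\zeta)\zeta^3\dif\zeta=\frac{\xi^4}{4}\bigl(1+\frac{a}{4}-a\log\xi\bigr)$ (using $\int_0^\xi\zeta^3\log\zeta\,\dif\zeta=\frac{\xi^4}{4}\log\xi-\frac{\xi^4}{16}$), so
\begin{equation*}
\xi^2 H-\frac{4-\alpha}{\xi}\int_0^\xi\zeta^2 H\dif\zeta
=\xi^3(1-a\log\xi)-\frac{4-\alpha}{4}\xi^3\Bigl(1+\frac{a}{4}-a\log\xi\Bigr)
=\xi^3\Bigl(\tfrac{\alpha}{4}-\tfrac{(4-\alpha)a}{16}-\tfrac{\alpha}{4}a\log\xi\Bigr).
\end{equation*}
With $a=\frac{4\alpha}{4-\alpha}$ the constant term is $\frac{\alpha}{4}-\frac{(4-\alpha)}{16}\cdot\frac{4\alpha}{4-\alpha}=0$, so the bracket reduces to $-\frac{\alpha}{4}a\,\xi^3\log\xi=\frac{\alpha^2}{4-\alpha}\xi^3(-\log\xi)$, which is nonnegative on $(0,1)$. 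Hence $B=\frac{\alpha^2}{4-\alpha}\int_0^1\xi^3(-\log\xi)\dif\xi=\frac{\alpha^2}{4-\alpha}\cdot\frac{1}{16}=\frac{\alpha^2}{16(4-\alpha)}$. The pleasant surprise — and the only mildly delicate point — is that the integrand does not change sign, so the absolute value disappears cleanly; one should double-check this for the relevant range $0<\alpha<2$ (indeed $0<\alpha<4$ suffices). Finally, $\frac{A}{B}=\frac{\alpha^3}{4(4-\alpha)^2}\cdot\frac{16(4-\alpha)}{\alpha^2}=\frac{4\alpha}{4-\alpha}$, so \eqref{eq:cmax} gives $c=\frac{\alpha}{4-\alpha}\cdot\frac{4\alpha}{4-\alpha}=\bigl(\frac{2\alpha}{4-\alpha}\bigr)^2$, matching \eqref{ansatz:c}. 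Substituting $A=\frac{\alpha^3}{4(4-\alpha)^2}$ and $B=\frac{\alpha^2}{16(4-\alpha)}$ into \eqref{eq:energydissipation:2}, the factor $\bigl(\frac{A}{4-\alpha}\bigr)^{(4-\alpha)/\alpha}\bigl(\frac{\alpha}{B}\bigr)^{2(2-\alpha)/\alpha}$ simplifies: $\frac{A}{4-\alpha}=\frac{\alpha^3}{4(4-\alpha)^3}$ and $\frac{\alpha}{B}=\frac{16(4-\alpha)}{\alpha}$, and collecting powers of $\frac{2\alpha}{4-\alpha}$ together with the numerical prefactor $2\pi$ yields $-\frac{\pi}{16}\bigl(\frac{2\alpha}{4-\alpha}\bigr)^{(8-\alpha)/\alpha}t^{(4-3\alpha)/\alpha}$. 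I do not expect any genuine obstacle here — the main thing to be careful about is the bookkeeping of exponents in that last simplification and the sign analysis that removes the absolute value in $B$.
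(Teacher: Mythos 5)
Your computation is correct and follows essentially the same route as the paper: evaluate $A$ and $B$ for the ansatz \eqref{ansatz:H} (the paper keeps general $b$ and then sets $b=0$, you set $b=0$ from the start), observe that with $a=\frac{4\alpha}{4-\alpha}$ the integrand in $B$ reduces to $\frac{\alpha^2}{4-\alpha}\xi^3|\log\xi|$ so the absolute value is harmless, and then substitute $A=\frac{\alpha^3}{4(4-\alpha)^2}$, $B=\frac{\alpha^2}{16(4-\alpha)}$ into \eqref{eq:cmax} and \eqref{eq:energydissipation:2}. The exponent bookkeeping at the end indeed gives $2^{\nicefrac{8}{\alpha}-6}\left(\frac{\alpha}{4-\alpha}\right)^{\frac{8-\alpha}{\alpha}}=\frac{1}{2^5}\left(\frac{2\alpha}{4-\alpha}\right)^{\frac{8-\alpha}{\alpha}}$, matching the paper's stated constant.
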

\begin{proof}
On the one hand, \eqref{def:A} equals
$$A=\frac{1}{2}-(2-\alpha)f(b),$$
where $f$ is given in \eqref{eq:f}. On the other hand, 
\begin{align*}
\xi^2 H-\frac{4-\alpha}{\xi}\int_{0}^{\xi}\zeta^2H\dif\zeta
&=\xi^{3+b}(1- a\log\xi)
-\frac{4-\alpha}{4+b}\xi^{3+b}\left((1- a\log\xi)+\frac{a}{4+b}\right)\\
&=\frac{(b+\alpha)^2}{4-\alpha}\xi^{3+b}|\log\xi|,
\end{align*}
where we have applied \eqref{eq:a}. Hence, \eqref{def:B} equals
$$B=\frac{(b+\alpha)^2}{4-\alpha}\int_0^1\xi^{3+b}|\log\xi|\dif\xi
=\frac{(b+\alpha)^2}{(4-\alpha)(4+b)^2}.$$
For $b=0$, these formulas simplify to (recall \eqref{eq:f0})
$$A=\frac{\alpha^3}{4(4-\alpha)^2},
\quad\quad
B=\frac{\alpha^2}{16(4-\alpha)}.$$
Therefore,
$$
\left(\frac{A}{4-\alpha}\right)^{\frac{4-\alpha}{\alpha}}
\left(\frac{\alpha}{B}\right)^{\frac{2(2-\alpha)}{\alpha}}
=\frac{1}{2^5}\left(\frac{2\alpha}{4-\alpha}\right)^{\frac{8-\alpha}{\alpha}}.
$$
This concludes the proof.
\end{proof}

\begin{figure}[h!]
	\centering
	\includegraphics[width=0.6\textwidth]{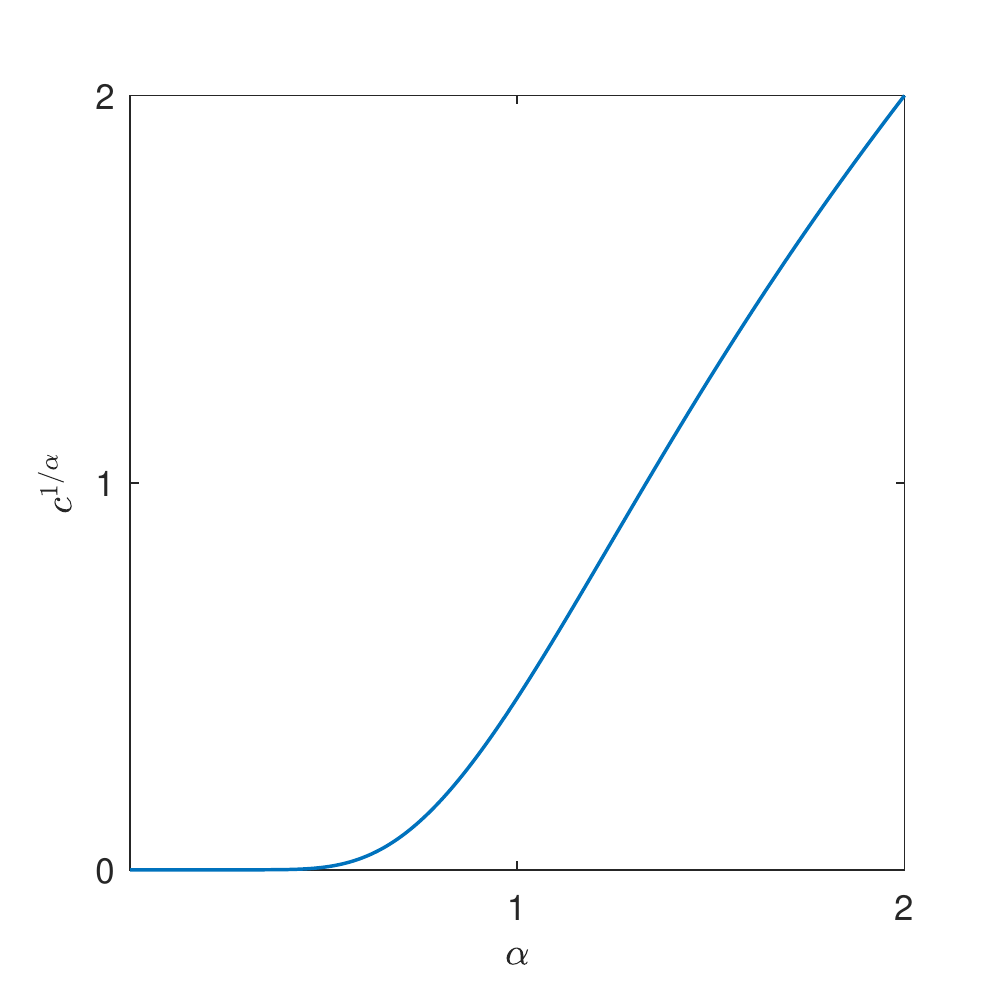}
	\caption{Plot of the growth rate $c(\alpha)^{\nicefrac{1}{\alpha}}$.}
	\label{fig:growth}
\end{figure}

\subsection{The truncation}\label{sec:truncation}
In this section we prove Theorems \ref{thm:nonuniqueness:Holder}, \ref{thm:nonuniqueness:Lp} \& \ref{thm:energy}. Let us fix $0<\alpha <2$.
Recall that the inconvenient of considering the self-similar subsolutions $(\bar{v},\bar{\sigma},\bar{q})$ from the previous sections is that they have infinite energy. This is because their tails are not integrable.
In order to make the energy finite, we fix $r_0>0$ and consider the truncated profile
$$h_\chi:=h\chi,$$
where $\chi:[0,\infty)\to[0,1]$ is a smooth cutoff with $\chi(r)=1$ if $r\in[0,r_0]$. 
On the one hand, the profile $h$ is determined by $H$ via \eqref{ansatz:h}, where we take $H$ as in Proposition \ref{prop:H} with $b=0$. This $H$ determines also the growth rate $c$ by \eqref{ansatz:c}, the terms $W_2,Q$ by Proposition \ref{prop:QW2}, and thus $(\bar{v},\bar{\sigma},\bar{q})$ by \eqref{ansatz:v}-\eqref{ansatz:q}, \eqref{ansatz:w1} and \eqref{ansatz:h}. On the other hand, $h_\chi$ determines the terms
$w_{\chi}$, $q_\chi$ and $\bar{e}_\chi$ by Corollary \ref{cor:radial}. The final subsolution $(\bar{v},\bar{\sigma},\bar{q})_{\chi}$ is defined by \eqref{ansatz:v}-\eqref{ansatz:q}, which agrees with $(\bar{v},\bar{\sigma},\bar{q})$ for $|x|\leq r_0$ provided that  $(ct)^{\nicefrac{1}{\alpha}}\leq r_0$.
Hence, for all $0\leq t\leq T$, where
\begin{equation}\label{eq:T}
T:=\frac{r_0^\alpha}{c}
=\left(\frac{4-\alpha}{2\alpha}\right)^2r_0^\alpha,
\end{equation} 
the energy dissipation rate of the (truncated) subsolution equals (recall Proposition \ref{prop:Eansatz})
$$\partial_t\bar{E}_{\chi}
=\int_{\R^2}\partial_t\bar{e}_\chi
=-\frac{\pi}{16}
\left(\frac{2\alpha}{4-\alpha}\right)^{\frac{8-\alpha}{\alpha}}t^{\frac{4-3\alpha}{\alpha}}.$$
Therefore, 
$$\bar{E}_{\chi}(t)
=E_{\chi}(0)
-\frac{\pi}{32}\frac{\alpha}{(2-\alpha)}
\left(\frac{2\alpha}{4-\alpha}\right)^{\frac{8-\alpha}{\alpha}}t^{\frac{2(2-\alpha)}{\alpha}},
$$
where
$$E_{\chi}(0)
=\frac{1}{2}\int_{\R^2}|v_s\chi|^2\dif x
\geq
\pi\int_0^{r_0}r^{3-2\alpha}\dif r
=\frac{\pi}{2(2-\alpha)}r_0^{2(2-\alpha)}.$$
We have proved Theorem \ref{thm:energy}. For Theorems \ref{thm:nonuniqueness:Holder} \& \ref{thm:nonuniqueness:Lp}, we invoke the h-principle for the Euler equation (Theorem \ref{thm:HP}) by taking some energy profile $e$ satisfying $\bar{e}<e$ on $\Omega=\{|x|<(ct)^{\nicefrac{1}{\alpha}}\}$. It is possible to select $e$ making the energy $E=\int e\dif x$ either constant or decreasing. Finally, notice that these solutions are uniformly bounded on $[0,T]\times\R^2$ if and only if $0<\alpha\leq 1$ due to \eqref{ansatz:h}.

\section{The case $\alpha\to 2$}\label{sec:alpha2}
In this section we analyze the borderline case $\alpha\to 2$. This corresponds to the point vortex $\omega_s=2\pi\delta_0$ for $\chi=1$. Notice that the initial velocity has infinite energy at the origin because $h^\circ(r)=\nicefrac{1}{r}$. However, our subsolution $(\bar{v},\bar{\sigma},\bar{q})$
has energy dissipation rate
$$\int_{\R^2}\partial_t\bar{e}
=-\frac{\pi}{2t},$$
and therefore it has finite energy for $t>0$
$$\bar{E}(t)=\bar{E}(1)-\frac{\pi}{2}\log t.$$
Hence, $(\bar{v},\bar{\sigma},\bar{q})$ is well defined in the Banach space $C_{\log t}(L^2\times L^1\times L^1)$, which is given by the (weighted) norm
$$\|f\|_{C_{\log t}L^p}
:=\sup_{t\in (0,T]}\frac{\|f(t)\|_{L^p}}{\max\{1,|\log t|^{\nicefrac{1}{p}}\}}.$$ 
This integrability class is enough to make sense of definition \eqref{eq:weak}. Moreover, the initial datum is attained in $L^{2-}$.
The h-principle for the Euler equation \cite{DeLellisSzekelyhidi09} can be easily modified to construct velocities in this class.
As a result, we show non-uniqueness of dissipative solutions to the Euler equation for the (truncated) point vortex datum.

\begin{thm} Let $0<\beta<1$. Then, there exist infinitely many weak solutions $v\in C_tL^{2-}$ to the Euler equation starting from
$$v_s(x)=\chi(|x|)\frac{x^\perp}{|x|^2}.$$
Furthermore, $v=v_s$ outside $\{|x|\leq 2\sqrt{t}\}$, and
$v\in C_{\log t}L^2$ with
$$\partial_t E=-\beta\frac{\pi}{2t},$$ 
for all $t\in (0,T]$, where $T$ is given in \eqref{eq:T}.
\end{thm}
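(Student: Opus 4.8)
The plan is to reduce the $\alpha\to 2$ case to the machinery already developed in Sections~3--4, checking at each step that the borderline exponent does not break the construction. First I would revisit the self-similar ansatz \eqref{ansatz:h} at $\alpha=2$: the scaling exponents become $\tfrac{1-\alpha}{\alpha}=-\tfrac12$, $\tfrac{2(1-\alpha)}{\alpha}=-1$, and $\xi=r/\sqrt{ct}$, so that $h(t,r)=(ct)^{-1/2}H(\xi)$ and $q,w_2$ scale like $(ct)^{-1}$. With $\alpha=2$ the profile equations \eqref{eq:IER:SS} simplify: \eqref{eq:IER:SS:2} reads $\partial_\xi(\xi^2 W_2)=\xi^2\partial_\xi(\xi H)$, the first condition \eqref{eq:H:condition:1} becomes $2\int_0^1\xi^2 H\dif\xi=1$, and the power-law tail \eqref{ansatz:Hpower} is $H(\xi)=\xi^{-1}$ for $\xi>1$, i.e.\ $v_s=\chi(|x|)x^\perp/|x|^2$. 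The ansatz \eqref{ansatz:H} with $b=0$ becomes $H(\xi)=(1-a\log\xi)\xi$ with $a=(4+0)(2+0)/(4-2)=4$, and one checks directly that this lies in $\mathcal H$ (the constraint \eqref{eq:H:condition:1} fixes $a=4$, and \eqref{eq:H:condition:2} holds since $\alpha>0$ was the only requirement there). Then $c=(2\alpha/(4-\alpha))^2=4$ by \eqref{ansatz:c}, which is why the turbulent zone is $\{|x|\le 2\sqrt t\}$.

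Next I would read off the energy. Proposition~\ref{prop:Eansatz} gives $\int_{\R^2}\partial_t\bar e\dif x=-\tfrac{\pi}{16}(2\alpha/(4-\alpha))^{(8-\alpha)/\alpha}t^{(4-3\alpha)/\alpha}$; at $\alpha=2$ the exponent $(8-\alpha)/\alpha=3$ and $(4-3\alpha)/\alpha=-1$, so $2^3/16=1/2$ and $\partial_t\bar E=-\tfrac{\pi}{2t}$, whence $\bar E(t)=\bar E(1)-\tfrac{\pi}{2}\log t$ by integration. This is exactly the formula claimed. The subtlety at $\alpha=2$ is that $\bar E(t)\to+\infty$ as $t\to 0^+$ and the initial velocity $h^\circ(r)=1/r$ has a non-$L^2$ singularity, so the subsolution no longer lives in $C_tL^2$; instead I would verify it belongs to the weighted space $C_{\log t}L^2$, by estimating $\|\bar v(t)\|_{L^2}^2=2\pi\int_0^\infty h_\chi^2 r\dif r$: the contribution from $\{r\le\sqrt{ct}\}$ is $\lesssim\int_0^1 (ct)^{-1}H^2 (ct)\xi\dif\xi\lesssim 1$ uniformly, while the tail $\{r\ge\sqrt{ct}\}$ contributes $\int_{\sqrt{ct}}^{r_0}r^{-1}\dif r\sim|\log t|$, matching the weight $\max\{1,|\log t|^{1/p}\}$ with $p=2$; the analogous $L^1$ bounds for $\bar\sigma,\bar q$ follow the same way. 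I would also note that $v^\circ$ is attained in $L^{2-}$, since for any $q<2$, $\|\bar v(t)-v_s\|_{L^q}^q$ is supported in $\{r\le\sqrt{ct}\}$ and $\to0$ by dominated convergence using $\int_0^1 r^{(1-\alpha)q}r\dif r<\infty$ for $q<2/(\alpha-1)=2$.

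The remaining point is the h-principle. Rather than Theorem~\ref{thm:HP} as stated (which assumes $C_tL^2$), I would invoke the original De~Lellis--Sz\'ekelyhidi construction \cite{DeLellisSzekelyhidi09}, which is a purely local, iterative perturbation argument: one adds highly oscillatory corrections supported in the open set $\Omega=\{|x|<2\sqrt t\}$ where $\bar\sigma\ne\bar v\ocircle\bar v$, converging in $C_tL^2_{\mathrm{loc}}$, and the perturbations do not touch the $\log t$-singular tail at all. Since the corrections are uniformly bounded and compactly supported in $\Omega$, they preserve membership in $C_{\log t}L^2$ and leave $v=v_s$ outside $\Omega$ untouched. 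Prescribing a pointwise energy density $e$ with $\bar e<e$ on $\Omega$ and $\int_{\R^2}(e-\tfrac12|\bar v|^2)\dif x$ equal to any prescribed positive decreasing profile (here tuned so that $\partial_t E=-\beta\pi/(2t)$ for the given $0<\beta<1$, which is admissible precisely because $\beta<1$ keeps $e$ strictly below the subsolution's relaxed energy budget) yields infinitely many solutions with the stated energy. The main obstacle I anticipate is bookkeeping the $C_{\log t}$ norm through the convex-integration iteration — one must be sure the weight $\max\{1,|\log t|^{1/p}\}$ is compatible with the successive corrections; but since every correction is supported away from $t=0$ in the region $|x|<2\sqrt t$ on which $|\bar v|\lesssim t^{-1/2}$ uniformly in the self-similar variable, this reduces to the standard bounds of \cite{DeLellisSzekelyhidi09} and causes no genuine difficulty.
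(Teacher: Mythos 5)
Your overall route is the same as the paper's (whose own proof of this statement is only a sketch): specialize the self-similar subsolution of Sections 3--4 to $\alpha=2$, getting $c=(2\alpha/(4-\alpha))^2=4$ and hence the zone $\{|x|\le 2\sqrt t\}$, the dissipation rate $\int\partial_t\bar e\,\mathrm{d}x=-\tfrac{\pi}{2t}$ and $\bar E(t)=\bar E(1)-\tfrac{\pi}{2}\log t$, verify membership in the weighted class $C_{\log t}(L^2\times L^1\times L^1)$ and attainment of the datum in $L^{2-}$, and then run the De Lellis--Sz\'ekelyhidi scheme localized in $\Omega$ since Theorem \ref{thm:HP} as stated assumes $C_tL^2$. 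All of these computations in your first two paragraphs are correct and match the paper.

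The gap is in the energy-prescription step, i.e.\ precisely the part of the statement involving $\beta$. Your parenthetical justification is inverted: the h-principle forces $e>\bar e$ on $\Omega$, so the prescribed energy density must lie strictly \emph{above} the relaxed one, not ``below the subsolution's energy budget.'' More seriously, the tuning you describe does not close. Since $v=v_s$ outside $\{|x|\le2\sqrt t\}$, the exterior contribution to the energy is $\pi\int_{2\sqrt t}^\infty\chi^2 r^{-1}\mathrm{d}r$, whose time derivative is exactly $-\tfrac{\pi}{2t}$, while at $\alpha=2$ the interior relaxed energy $\int_{\{|x|\le2\sqrt t\}}\bar e\,\mathrm{d}x$ is \emph{constant} in $t$ by self-similarity (the whole dissipation $-\tfrac{\pi}{2t}$ comes from the shrinking tail). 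Hence any h-principle solution has $E(t)=-\tfrac{\pi}{2t}$-dissipating exterior part plus $\phi(t):=\int_\Omega e\,\mathrm{d}x>0$, and imposing $\partial_tE=-\beta\tfrac{\pi}{2t}$ on all of $(0,T]$ forces $\phi'(t)=(1-\beta)\tfrac{\pi}{2t}$, i.e.\ $\phi(t)=\phi(T)-(1-\beta)\tfrac{\pi}{2}\log(T/t)\to-\infty$ as $t\to0^+$, contradicting $\phi\ge\int_\Omega\bar e>0$ when $\beta<1$. So ``tuning $e$'' as you propose can only produce the exact rates $\beta\ge1$ on the whole interval (e.g.\ $\beta=1$ by taking $\int_\Omega e$ constant); to obtain the stated conclusion for $0<\beta<1$ you would need a genuinely different mechanism (or a reinterpretation/renormalization of $E$), and this point — which the paper itself leaves unargued — is exactly where your proposal, as written, fails rather than merely lacks detail.
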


\section{Acknowledgements}
The author thanks \'Angel Castro, Daniel Faraco, Francisco Gancedo, Antonio Hidalgo and L\'aszl\'o Sz\'ekelyhidi for stimulating discussions during the preparation of this work.
This research started in Princeton during the Special Year 2021-22: h-Principle and Flexibility in Geometry and PDEs. 
This work owes a great deal to the Analysis Seminar, as well as financial support provided by the Institute for Advanced Study. 
Part of the work took place at the University of Sevilla. The author would like to thank its financial support and its friendly atmosphere.
This work was finished in Leipzig. The author would like to thank the excellent working conditions and financial support provided by the Max Planck Institute for Mathematics in the Science.
The author acknowledge financial support from the Spanish Ministry of Science and Innovation through the Severo Ochoa Programme for Centres of Excellence in R\&D
(CEX2019-000904-S), the grants PID2020-114703GB-I00 and MTM2017-85934-C3-2-P, and the ERC Advanced Grant 834728.

\bibliographystyle{abbrv}
\bibliography{Nonuniqueness_Lp}

\begin{flushleft}
	\quad\\
	\textsc{Max Planck Institute for Mathematics in the Sciences\\
	04103 Leipzig, Germany}\\
	\textit{E-mail address:} \href{mailto:fmengual@mis.mpg.de}{\nolinkurl{fmengual@mis.mpg.de}}
\end{flushleft}

\end{document}